\newtheorem{thm}{Theorem}
\newtheorem{prop}[thm]{Proposition}
\newtheorem{lem}[thm]{Lemma}
\newtheorem{cor}[thm]{Corollary}
\theoremstyle{definition}
\newtheorem{defn}[thm]{Definition}
\newtheorem{rmk}[thm]{Remark}
\newcommand{\sqbinom}[2]{\big[\begin{smallmatrix} #1 \\ #2 \end{smallmatrix}\big]}
\begin{document}

\title{On Higher Partial Derivatives of Implicit Functions and their Combinatorics}

\author{Shaul Zemel}

\maketitle

\section*{Introduction}

In the previous paper \cite{[Z]}, we considered the higher derivatives of an implicit function of one variable. As the binomial Leibniz rule completes the rule for first derivatives of products, and Fa\`{a} di Bruno's formula extends the chain rule to higher orders, there is a formula, initially proved in \cite{[C1]} and then in \cite{[CF]} (the latter corrected in \cite{[Wi]}), which plays the same role for the well-known formula from first-year calculus, expressing the first derivative of an implicit function as minus the quotient of the partial derivatives of the defining two-variable function. For more on the history of such questions, see \cite{[J1]} (and some of the references therein), while other references that are relevant to the implicit function case are \cite{[Wo]}, \cite{[C2]}, \cite{[S]}, \cite{[N]}, and \cite{[J3]}. Note that the formula from \cite{[Wi]} and \cite{[J3]} contains many terms, and \cite{[Z]} uses certain combinations of derivatives as the building blocks for the formula, yielding another formula, which allows one to write the $n$th derivative of an implicit function using significantly less terms. The special case where the implicit function is an inverse function was considered, together with parametric functions, in \cite{[J2]}.

The formula of Fa\`{a} di Bruno involves ordinary partitions of the order $n$ of the derivative. We recall that $\lambda$ is a partition of $n$, a statement that we denote by $\lambda \vdash n$, meaning that $\lambda$ consists of a decreasing sequence of positive integers $a_{q}$, $1 \leq q \leq p$ that sum to $n$. A more useful description of a partition is by using, for every $j$, the multiplicity $m_{j}$ of $j$ in $\lambda$, counting how many times $j$ shows up in the partition (i.e., how many of the $a_{q}$'s equal $j$). The formulae from \cite{[J2]}, \cite{[J3]}, \cite{[Wi]}, and \cite{[Z]} (among others) are based on partitions with two parameters, i.e., of certain integral vectors, with the coefficient corresponding to any such partition being expressed again in terms of the multiplicities describing the partition.

\medskip

One may ask similar questions for functions of several variables. A systematic construction of the generalization of Fa\`{a} di Bruno's formula to the case where both functions used in the composition have several variables is given in \cite{[CS]}. Both this question and the binomial Leibniz rule were considered combinatorially in \cite{[H]} (see also some of the references cited there, as well as the concise paper \cite{[M]}), where it is also realized that higher derivatives with respect to distinct variables give only coefficients of 1, and the coefficients showing up in other derivatives come only from different terms from the distinct variables case coinciding (this is called ``collapsing partitions'' in that reference). In this paper, we treat the question of an implicit function of several variables.

\smallskip

Now, when $y=y(x)$ is the function defined implicitly by $f(x,y)=0$, there are two variables that can become vector-valued: Either $x$, or $y$ and $f$. The paper \cite{[Y]} already considered the case where both variables are vector-valued, yielding formulae for the derivatives (i.e., the expansion) in terms of various types of integrals, but without any combinatorial meaning. The paper \cite{[A]} established some properties of the expansion itself when $y$ and $f$ are vectors, in terms of a recursive relation, showing how quickly the details become complicated in general (these formulae are then used there for studying the behavior of solutions of rate-distortion questions in some situations). The main issue is the repeated multiplication by the inverse of the Jacobian of $f$ with respect to $y$, and the derivatives of its entries, which is why \cite{[Y]} assumes in many formula that this derivative is the identity matrix up to higher order terms.

So in this paper we keep $f$ and $y$ as scalar-valued functions, let $x$ be a vector-valued variable $\vec{x}=(x_{1},\ldots,x_{N})$, and evaluate all the derivatives of $y$, of any order. The argument follows the one from \cite{[Z]} very closely, and in particular our main formula, Theorem \ref{coeffval}, is given in terms of the larger building blocks, analogously to those from that reference (though we also provide the formula in terms of products of the usual derivatives in Theorem \ref{claselts} later). Note that here also in the case of derivatives with respect to distinct variables, the coefficients do not always equal 1, and when some variables in the derivative coincide, we multiply the basic coefficients from the formula for distinct variables by the number of collapsing partitions from \cite{[H]}. Note that \cite{[H]} begins with the case of distinct variables, and then explains how counting collapsing partitions yields the general case, but for our formulae the argument for distinct variables is not simpler to work with than the general one (especially in terms of notation), so we work with a general derivative to begin with. See Remark \ref{Hardy} at the end for the precise details of this observation, for the formulae from both Theorems \ref{coeffval} and \ref{claselts}.

As usual when considering variables that can either coincide or be distinct, the appropriate language to phrase and prove our formulae is in terms of multi-indices, or equivalently multisets, which are based on the indices of our variables (between 1 and $N$ for $\vec{x}=(x_{1},\ldots,x_{N})$). Each such multiset is determined by the multiplicities with which it contains the indices $1 \leq i \leq N$, and the operation of sums of multisets (corresponding to union of disjoint sets) is expressed in terms of addition of these multiplicities. Indeed, the partitions that we get in our formulae are of vectors of length 2 with the first entry being a multiset (this is equivalent to partitioning vectors of length $N+1$, where the last coordinate plays a role that differs from that of all the others). Several expressions that are based on these multisets (like factorials or binomial coefficients) are given in terms of products of the ordinary expressions, based on the multipicities---we shall define each one of these as it shows up.

\medskip

This paper is divided into 4 sections. Section \ref{Blocks} introduces the building blocks that are used in our main formula. Section \ref{FirstDesc} determines which products of these combinations show up in said formula, while Section \ref{DetCoeff} establishes the value of the combinatorial coefficient with which every such expression shows up. Finally, Section \ref{Expansion} explains the origin of these combinations, and proves the formula for the derivatives using only ordinary products of derivatives.

I would like to thank S. Agmon for stimulating discussions on this subject, as well as on the relevant parts of \cite{[A]}.

\section{The Basic Building Blocks \label{Blocks}}

We begin by fixing some notation for the rest of the paper. As mentioned in the Introduction, we will have $N$ free variables $x_{i}$, $1 \leq i \leq N$, and we shall use $\vec{x}$ for $(x_{1},\ldots,x_{N})$ for short. Given a function $g$ of $\vec{x}$, and perhaps of another variable $y$, we shall write, as in \cite{[Z]} and others, the partial derivative of $g$ with respect to $y$ as simply $g_{y}$, and we shall shorten the partial derivative with respect to the variable $x_{i}$ even further to $g_{i}$. For higher order derivatives of small order we shall simply put one index for each variable with respect to which we differentiate.

In order to discuss higher order derivatives in a more natural way, we introduce, following \cite{[Z]}, the following notation. The derivative $\frac{\partial^{r}g}{\partial y^{r}}$ for some $r\geq0$ will be written as $g_{y^{r}}$. On the other hand, when $I$ is a subset of the numbers between 1 and $N$, whose cardinality we denote by $|I|$, the notation $g_{I}$ will stand for the derivative $\partial^{|I|}g\big/\prod_{i \in I}\partial x_{i}$. More generally, we allow $I$ to be a multiset, in which indices $i$ between 1 and $N$ may appear with multiplicities, and $|I|$ is the sum of these multiplicities. Then we still write $g_{I}$ for $\partial^{|I|}g\big/\prod_{i \in I}\partial x_{i}$, where in the product the number of times that we differentiate with respect to the variable $x_{i}$ is its multiplicity in $I$. In more conventional notation, if $\nu_{i}$ is the multiplicity with which $i$ shows up in $I$, then $|I|=\sum_{i=1}^{N}\nu_{i}$, and $g_{I}$ is $\partial^{|I|}g\big/\prod_{i=1}^{N}\partial x_{i}^{\nu_{i}}$. For a general mixed derivative $\partial^{|I|+r}g\big/\prod_{i=1}^{N}\partial x_{i}^{\nu_{i}} \cdot \partial y^{r}$ we shall use the notation $g_{Iy^{r}}$. We shall view all sets as multisets (meaning that a set is a multiset in which the only multiplicities are 0 and 1), and in particular the empty set is the multiset in which all the indices come with multiplicity 0.

\smallskip

So let $f=f(\vec{x},y)$ be a function of $N+1$ variables, and assume that we are given points $(\vec{x}_{0},y_{0})$, with $\vec{x}_{0}$ standing for $(x_{1,0},\ldots,x_{N,0})$, such that $f$ is continuously differentiable at the point $(\vec{x}_{0},y_{0})$ and satisfies $f(\vec{x}_{0},y_{0})=0$ and $f_{y}(\vec{x}_{0},y_{0})\neq0$. Then the equality $f(\vec{x},y)=0$ defines, by the Implicit Function Theorem, a function $y=y(\vec{x})$ at a neighborhood of $\vec{x}_{0}$ (with $y(\vec{x}_{0})=y_{0}$) that has the same differentiability properties of $f$. All the derivatives that we shall henceforth take will be at either the point $\vec{x}_{0}$ or the point $(\vec{x}_{0},y_{0})$, and we shall not mention these explicit values in the notation.

Now, for the first derivative $y_{i}$, we can keep the variables $x_{j}$, $j \neq i$ as fixed, and use the result from any basic calculus course to deduce that $y_{i}=-\frac{f_{i}}{f_{y}}$. We are then assuming that $f$ is continuously differentiable of order at least $n$ for some $n \geq 2$, and we are interested in an explicit expression, using combinatorial coefficients, for the derivative $y_{I}$ for any multiset $I$ with $|I|=n$ (the latter equality means that the multiset $I$ has \emph{size} $n$), in terms of the partial derivatives of $f$. We shall follow the arguments and examples from \cite{[Z]}.

\smallskip

Let $g$ be a function of $\vec{x}$ and $y$, and consider the function taking $\vec{x}$ to $g\big(\vec{x},y(\vec{x})\big)$. The chain rule implies that its derivative with respect to a variable $x_{j}$ is $g_{j}+g_{y}y_{j}=g_{j}-\frac{g_{y}f_{i}}{f_{y}}$. We therefore obtain
\begin{equation}
\frac{\partial}{\partial x_{j}}f_{i}=\frac{f_{ij}f_{y}-f_{yi}f_{j}}{f_{y}}\qquad\mathrm{and}\qquad\frac{\partial}{\partial x_{j}}f_{y}=\frac{f_{jy}f_{y}-f_{yy}f_{j}}{f_{y}}, \label{ddj1stder}
\end{equation}
from which we can evaluate the second derivative $y_{ij}:=-\frac{\partial}{\partial x_{j}}\frac{f_{i}}{f_{y}}$. One way is through the derivative of a quotient, via Equation \eqref{ddj1stder}, and the other one is by taking $g=\frac{f_{i}}{f_{y}}$ and considering $-\big(\frac{f_{i}}{f_{y}}\big)_{j}+\big(\frac{f_{i}}{f_{y}}\big)_{y}\frac{f_{j}}{f_{y}}$, yielding that $y_{ij}$ equals
\begin{equation}
\frac{-f_{ij}f_{y}+f_{yj}f_{i}}{f_{y}^{2}}+\frac{f_{iy}f_{y}-f_{yy}f_{i}}{f_{y}^{2}}\cdot\frac{f_{j}}{f_{y}}=\frac{-f_{ij}f_{y}^{2}+f_{iy}f_{j}f_{y}+f_{jy}f_{i}f_{y}-f_{yy}f_{i}f_{j}}{f_{y}^{3}}. \label{yder2}
\end{equation}
In Equation \eqref{yder2} we already used the equality of the mixed derivatives when we replaced $f_{yj}$ by $f_{jy}$, and we shall always push the derivatives with respect to $y$ to the end (since $f$ will always be assumed to have enough differentiability properties to justify such operations), in view of the notation $g_{Iy^{r}}$ introduced above. Note that when $i=j$ the two sums with the $+$ sign coincide, producing a factor of 2 as in Equation (2) of \cite{[Z]}.

As in \cite{[J2]} and \cite{[Z]}, we will now prove that the denominator of the derivative $y_{I}$, when $n=|I|$, is $f_{y}^{2n-1}$, via the following lemma, in which we shall write $I+j$ for the multiset sum of $I$ and $\{j\}$ (this is a union if $j \notin I$, and means increasing the multiplicity of $j$ by one while leaving the other multiplicities invariant in general), and note that $y_{I+j}$ is the same as $(y_{I})_{j}$ (as well as $(y_{j})_{I}$).
\begin{lem}
Let $I$ be a multiset of size $n$, and take $1 \leq j \leq N$. Then $f_{y}^{2n+1}y_{I+j}$ can be evaluated as \[f_{y}^{2}\tfrac{\partial}{\partial x_{j}}\big(f_{y}^{2n-1}y_{I}\big)-(2n-1)(f_{jy}f_{y}-f_{yy}f_{j})\big(f_{y}^{2n-1}y_{I}\big).\] \label{denfree}
\end{lem}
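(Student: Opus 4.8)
The plan is to recognize that $y_{I+j}=(y_I)_j=\frac{\partial}{\partial x_j}y_I$, where the derivative is the \emph{total} one with respect to $x_j$ (accounting for the dependence of $y$ on $\vec{x}$), and then to read off the desired expression from the product rule applied to the auxiliary quantity $f_y^{2n-1}y_I$, which is precisely the ``numerator'' whose polynomiality we want to track. In other words, rather than differentiating the quotient directly, I would differentiate $f_y^{2n-1}y_I$ as a product and solve for the term I actually need.

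First I would apply the product rule to obtain
\[
\frac{\partial}{\partial x_{j}}\big(f_{y}^{2n-1}y_{I}\big)=(2n-1)f_{y}^{2n-2}\Big(\tfrac{\partial}{\partial x_{j}}f_{y}\Big)y_{I}+f_{y}^{2n-1}\tfrac{\partial}{\partial x_{j}}y_{I},
\]
and then solve for the last summand $f_{y}^{2n-1}\frac{\partial}{\partial x_{j}}y_{I}$. Multiplying the resulting identity through by $f_{y}^{2}$ and using $\frac{\partial}{\partial x_{j}}y_{I}=y_{I+j}$ gives
\[
f_{y}^{2n+1}y_{I+j}=f_{y}^{2}\tfrac{\partial}{\partial x_{j}}\big(f_{y}^{2n-1}y_{I}\big)-(2n-1)f_{y}^{2n}\Big(\tfrac{\partial}{\partial x_{j}}f_{y}\Big)y_{I}.
\]
This already has the shape of the claimed formula; only the final term needs to be rewritten.

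The one remaining ingredient is the second identity in Equation \eqref{ddj1stder}, namely $\frac{\partial}{\partial x_{j}}f_{y}=\frac{f_{jy}f_{y}-f_{yy}f_{j}}{f_{y}}$. Substituting this into the last display cancels a single power of $f_{y}$, turning $f_{y}^{2n}\big(\frac{\partial}{\partial x_{j}}f_{y}\big)$ into $f_{y}^{2n-1}(f_{jy}f_{y}-f_{yy}f_{j})$; regrouping the factor $f_{y}^{2n-1}y_{I}$ then yields exactly the asserted expression. Since this is a direct application of the product rule followed by one substitution, I do not expect a genuine obstacle: the only point requiring care is to treat $\frac{\partial}{\partial x_{j}}$ as a total derivative and to invoke \eqref{ddj1stder} for $\frac{\partial}{\partial x_{j}}f_{y}$ rather than a naive partial derivative. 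The lemma then serves as the inductive step establishing the denominator claim, since it shows that whenever $f_{y}^{2n-1}y_{I}$ is polynomial in the derivatives of $f$, so is $f_{y}^{2n+1}y_{I+j}$, with the base case furnished by $y_{i}=-f_{i}/f_{y}$ (denominator $f_{y}=f_{y}^{2\cdot1-1}$).
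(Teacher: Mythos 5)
Your proposal is correct and follows essentially the same route as the paper's proof: differentiating the product $f_{y}^{2n-1}y_{I}$ via the product and chain rules, invoking Equation \eqref{ddj1stder} to rewrite the total derivative $\frac{\partial}{\partial x_{j}}f_{y}$ (thereby cancelling one power of $f_{y}$ after multiplying through by $f_{y}^{2}$), and moving the term with the factor $2n-1$ to the other side. The only difference is presentational---you solve for $f_{y}^{2n-1}y_{I+j}$ explicitly before substituting, while the paper compresses these steps into one sentence---so there is nothing to correct.
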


\begin{proof}
We differentiate $f_{y}^{2n-1}\big(\vec{x},y(\vec{x})\big)y_{I}(\vec{x})$ with respect to $x_{j}$ via the chain and product rules, and use Equation \eqref{ddj1stder} for evaluating $f_{y}\cdot\frac{\partial}{\partial x_{j}}f_{y}$ (after we multiplied the previous expression by $f_{y}^{2}$). Moving the expression involving $2n-1$ to the other side yields the desired equality. This proves the lemma.
\end{proof}

Let us see how Lemma \ref{denfree} evaluates $f_{y}^{5}y_{ijk}$, using the fact that $f_{y}^{3}y_{ij}$ equals $-f_{ij}f_{y}^{2}+f_{iy}f_{j}f_{y}+f_{jy}f_{i}f_{y}-f_{yy}f_{i}f_{j}$ by Equation \eqref{yder2}. We expand $\frac{\partial}{\partial x_{k}}f_{i}$, $\frac{\partial}{\partial x_{k}}f_{j}$, and $\frac{\partial}{\partial x_{k}}f_{y}$ via Equation \eqref{ddj1stder}, and while in the case of ordinary derivatives from \cite{[Z]}, the resulting parts of $\frac{d}{dx}(f_{y}^{3}y'')$ all canceled, here, with $\frac{\partial}{\partial x_{k}}(f_{y}^{3}y_{ij})$, only those with $f_{iy}f_{yy}f_{j}f_{k}$ and $f_{jy}f_{yy}f_{i}f_{k}$ do in general (i.e., when $i$, $j$, and $k$ are distinct). When we consider the remaining parts and recall the second term from Lemma \ref{denfree}, we deduce that $f_{y}^{5}y_{ijk}$ is the sum of three expressions, one is
\[-f_{ijk}f_{y}^{4}+f_{ijy}f_{k}f_{y}^{3}+f_{iky}f_{j}f_{y}^{3}+f_{jky}f_{i}f_{y}^{3}+\]
\begin{equation}
-f_{iyy}f_{j}f_{k}f_{y}^{2}-f_{jyy}f_{iyy}f_{j}f_{k}f_{y}^{2}-f_{kyy}f_{iyy}f_{i}f_{j}f_{y}^{2}+f_{yyy}f_{i}f_{j}f_{k}f_{y}, \label{yder3D3}
\end{equation}
the other is
\[-2f_{ij}f_{ky}f_{y}^{3}+f_{ik}f_{jy}f_{y}^{3}+f_{jk}f_{iy}f_{y}^{3}+2f_{ij}f_{yy}f_{k}f_{y}^{2}-f_{ik}f_{yy}f_{j}f_{y}^{2}-f_{jk}f_{yy}f_{i}f_{y}^{2}+\]
\begin{equation}
-2f_{iy}f_{jy}f_{k}f_{y}^{2}+f_{iy}f_{ky}f_{j}f_{y}^{2}+f_{jy}f_{ky}f_{i}f_{y}^{2}, \label{killsym}
\end{equation}
and the third one is
\begin{equation}
3(f_{ky}f_{y}-f_{yy}f_{k})(f_{ij}f_{y}^{2}-f_{iy}f_{j}f_{y}-f_{jy}f_{i}f_{y}+f_{yy}f_{i}f_{j}). \label{yder3rest}
\end{equation}
We see, however, that this presentation (which was based on taking the derivative with respect to $x_{k}$ after the others) does not give an expression that is symmetric with respect to interchanging $k$ with $i$ or with $j$. As we know that $f_{y}^{5}y_{ijk}$ must have this symmetry property, we can write $f_{y}^{5}y_{ijk}$ as the average of $\frac{\partial}{\partial x_{k}}(f_{y}^{3}y_{ij})-3(f_{ky}f_{y}-f_{yy}f_{k})(f_{y}^{3}y_{ij})$, $\frac{\partial}{\partial x_{j}}(f_{y}^{3}y_{ik})-3(f_{jy}f_{y}-f_{yy}f_{j})(f_{y}^{3}y_{ik})$, and $\frac{\partial}{\partial x_{i}}(f_{y}^{3}y_{jk})-3(f_{iy}f_{y}-f_{yy}f_{i})(f_{y}^{3}y_{jk})$. Then the expression from Equation \eqref{yder3D3} remains, the ones from Equation \eqref{killsym} indeed cancel out, and Equation \eqref{yder3rest} is replaced by three terms, without the factor 3. One can, in fact, expand Equation \eqref{yder3rest} can be expanded like in \cite{[N]} and \cite{[J3]} to give \[3f_{ij}f_{ky}f_{y}^{3}-3f_{ij}f_{yy}f_{k}f_{y}^{2}-3f_{iy}f_{ky}f_{j}f_{y}^{2}-3f_{jy}f_{ky}f_{i}f_{y}^{2}+\] \[+3f_{iy}f_{yy}f_{j}f_{k}f_{y}+3f_{jy}f_{yy}f_{i}f_{k}f_{y}+3f_{ky}f_{yy}f_{i}f_{j}f_{y}-3f_{yy}^{2}f_{x}f_{i}f_{j}f_{k},\] and verify that adding it to Equation \eqref{killsym} indeed gives the asserted average of Equation \eqref{yder3rest}, expanded in the same way.

We recall from \cite{[Z]} that our basic building blocks in the case of ordinary derivatives were certain combinatorial sums. Based on our formula for $f_{y}^{3}y_{ij}$ and on the first term in the formula for $f_{y}^{5}y_{ijk}$, we define our building blocks in this setting as follows. As with sets, we say that a multiset $K$ is \emph{contained} in another multiset $J$ when the multiplicity $\kappa_{i}$ with which the index $i$ appears in $K$ is smaller than or equal to the multiplicity $\eta_{i}$ with which it shows up in $J$, for every $1 \leq i \leq N$. When this is the case we write $\binom{J}{K}$ for the product $\prod_{i=1}^{N}\binom{\eta_{i}}{\kappa_{i}}$ (this is just 1 in case $J$, hence also $K$, is a set, thus with multiplicities 0 or 1, and vanishes when $K$ is not contained in $J$).
\begin{defn}
Let $g$ be a smooth enough function $g$ of the variables $\vec{x}$ and $y$, and take a multiset $J$ of indices between 1 and $N$. For these parameters we define the expression $\Delta_{J}g=\sum_{K \subseteq J}(-1)^{|K|}\binom{J}{K}g_{(J \setminus K)y^{|K|}}\cdot\prod_{i \in K}f_{i} \cdot f_{y}^{|J|-|K|}$. When $J$ is $\{i\}$, $\{i,j\}$, $\{i,j,k\}$, $\{i,j,k,l\}$ etc. we will write $\Delta_{i}g$, $\Delta_{ij}g$, $\Delta_{ijk}g$, $\Delta_{ijkl}g$, etc. respectively for the corresponding expression $\Delta_{J}g$ (and the same for multisets like $\{i,i\}$, $\{i,i,j\}$, and $\{i,i,i\}$, exhibiting multiplicities). \label{DeltaJg}
\end{defn}
Note that the product $\prod_{i \in K}f_{i}$ in Definition \ref{DeltaJg} means $\prod_{i=1}^{N}f_{i}^{\kappa_{i}}$, with the multiplicities from above, and that $J \setminus K$ is the multiset difference, in which $i$ shows up with the multiplicity $\eta_{i}-\kappa_{i}$.

Definition \ref{DeltaJg} expresses the formula for $f_{y}^{3}y_{ij}$ given in Equation \eqref{yder2} as just $-\Delta_{ij}f$, and the expression for $f_{y}^{5}y_{ijk}$ obtained after the averaging from the last paragraph becomes
\begin{equation}
-f_{y}\Delta_{ijk}f+\Delta_{k}f_{y}\cdot\Delta_{ij}f+\Delta_{j}f_{y}\cdot\Delta_{ik}f+\Delta_{i}f_{y}\cdot\Delta_{jk}f \label{yder3fin}
\end{equation}
from Equation \eqref{yder3D3} and the averaging of Equation \eqref{yder3rest} (the three symmetric summands in Equation \eqref{yder3fin} correspond to the coefficient 3 in the expression $-f_{y}\Delta_{3}f+3\Delta_{1}f_{y}\cdot\Delta_{2}f$ from \cite{[Z]} for $f_{y}^{5}$ times the third ordinary derivative, and this coefficient indeed shows up when $i=j=k$). Like in \cite{[Z]} we shall follow the convention that the index precedes the $\Delta$-sign, namely $\Delta_{k}f_{y}$ from the last expression will always mean $\Delta_{k}(f_{y})$ and never $(\Delta_{k}f)_{y}$, and the same for every such expression that will appear below. Using Definition \ref{DeltaJg}, the formula for $f_{y}\cdot\frac{\partial}{\partial x_{j}}g\big(\vec{x},y(\vec{x})\big)$ is $\Delta_{j}g$ and the numerators from Equation \eqref{ddj1stder} are $\Delta_{j}f_{i}$ and $\Delta_{j}f_{y}$ respectively, and we note again that these are not the derivatives of $\Delta_{j}f$ itself, since the latter vanishes by the definition of $y=y(\vec{x})$ implicitly via $f\big(\vec{x},y(\vec{x})\big)=0$.

\section{The Expressions Appearing in $f_{y}^{2n-1}y_{I}$ \label{FirstDesc}}

Given an expression for $f_{y}^{2n-1}y_{I}$, where $I$ is a multiset of size $n$, in terms of the construction blocks from Definition \ref{DeltaJg}, we would like to get, for an index $k$, the formula for $f_{y}^{2n+1}y_{I+k}$ using Lemma \ref{denfree} (recall that $I+k$ is our shorthand for the multiset sum of $I$ and $\{k\}$, which has size $n+1$). As this lemma involves differentiation, we need to differentiate the expressions from Definition \ref{DeltaJg}. For this we recall again that the derivative is of the expression $\Delta_{J}g\big(\vec{x},y(\vec{x})\big)$, where $y(\vec{x})$ is defined via $f(\vec{x},y)=0$ and therefore $y_{k}=-\frac{f_{k}}{f_{y}}$, and let $\eta_{j}$ denote the multiplicity with which $j$ appears in $J$.
\begin{lem}
$f_{y}^{2}\frac{\partial}{\partial x_{k}}\Delta_{J}g$ equals $f_{y}\Delta_{J+k}g+|J|\Delta_{k}f_{y}\cdot\Delta_{J}g-\sum_{j=1}^{N}\eta_{j}\Delta_{jk}f\cdot\Delta_{J \setminus j}g_{y}$. \label{derDelta}
\end{lem}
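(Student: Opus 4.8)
The plan is to differentiate the defining sum for $\Delta_J g$ from Definition \ref{DeltaJg} term by term, exploiting the fundamental relation $f_y\frac{\partial}{\partial x_k}h=\Delta_k h$ recorded after that definition (valid for any smooth $h$ evaluated along $y(\vec{x})$, since $y_k=-f_k/f_y$). Writing a generic summand as $(-1)^{|K|}\binom{J}{K}$ times the product $g_{(J\setminus K)y^{|K|}}\cdot\prod_{i\in K}f_i\cdot f_y^{|J|-|K|}$, I would apply the product rule, differentiate along $y(\vec{x})$, and multiply through by $f_y^2$. This yields three groups of terms according to which of the three factors the $\frac{\partial}{\partial x_k}$ hits, the spare powers of $f_y$ serving both to convert each chain-rule derivative into a $\Delta_k$ and to keep the powers of $f_y$ balanced.

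For the group in which $\frac{\partial}{\partial x_k}$ hits $g_{(J\setminus K)y^{|K|}}$, the relation $\Delta_k g_{(J\setminus K)y^{|K|}}=f_y g_{(J\setminus K+k)y^{|K|}}-f_k g_{(J\setminus K)y^{|K|+1}}$ splits it into two families: one carrying an extra $x_k$-derivative on $g$, and one carrying an extra $y$-derivative together with a factor $f_k$. I would show these reassemble exactly into $f_y\Delta_{J+k}g$ by expanding $\binom{J+k}{L}$ over submultisets $L$ of $J+k$ and separating according to how the extra copy of $k$ is distributed, via Pascal's rule $\binom{\eta_k+1}{\ell_k}=\binom{\eta_k}{\ell_k}+\binom{\eta_k}{\ell_k-1}$; the two resulting pieces match the two families after the reindexings $L\mapsto L$ (for $L\subseteq J$) and $L\mapsto L-k$.

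The heart of the argument is the remaining two groups. For the one in which $\frac{\partial}{\partial x_k}$ hits $\prod_{i\in K}f_i$, I would invoke the identity $f_y\Delta_k f_j=\Delta_{jk}f+f_j\Delta_k f_y$, which I verify by direct expansion and which holds even in the coincident case $j=k$ (so that the multiplicities entering $\Delta_{kk}f$ are correctly produced). Substituting this splits the group in two. The $\Delta_{jk}f$ part, after reindexing $K=K'+j$ and using the absorption identity $(\kappa'_j+1)\binom{\eta_j}{\kappa'_j+1}=\eta_j\binom{\eta_j-1}{\kappa'_j}$, collapses the inner sum over $K'\subseteq J\setminus j$ into $\Delta_{J\setminus j}g_y$ (upon recognizing $g_{(J\setminus K)y^{|K|}}=(g_y)_{((J\setminus j)\setminus K')y^{|K'|}}$), and hence yields $-\sum_{j=1}^{N}\eta_j\Delta_{jk}f\cdot\Delta_{J\setminus j}g_y$. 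The $f_j\Delta_k f_y$ part contributes $\Delta_k f_y$ times a copy of the original sum weighted by $|K|$. Finally, the group in which $\frac{\partial}{\partial x_k}$ hits $f_y^{|J|-|K|}$ contributes $\Delta_k f_y$ times a copy weighted by $|J|-|K|$; adding the two weighted copies restores the uniform weight $|J|$, producing $|J|\Delta_k f_y\cdot\Delta_J g$ and completing the identification with the claimed right-hand side.

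The main obstacle is purely the combinatorial bookkeeping with multiplicities: tracking the coefficients $\binom{J}{K}$ through the submultiset reindexings, where it is precisely the Pascal and absorption identities above that force the coefficients of $\Delta_{J+k}g$ and of $\Delta_{J\setminus j}g_y$ to come out exactly right. The only genuinely algebraic input is verifying $f_y\Delta_k f_j=\Delta_{jk}f+f_j\Delta_k f_y$, including the $j=k$ case; everything else is a routine application of the product and chain rules.
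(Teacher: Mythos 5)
Your proposal is correct and follows essentially the same route as the paper's proof: the same three-way Leibniz split, the same Pascal recombination $\binom{\eta_k}{\lambda_k}+\binom{\eta_k}{\lambda_k-1}=\binom{\eta_k+1}{\lambda_k}$ yielding $f_y\Delta_{J+k}g$, and the same absorption identity $\kappa_j\binom{\eta_j}{\kappa_j}=\eta_j\binom{\eta_j-1}{\kappa_j-1}$ producing $-\sum_j\eta_j\Delta_{jk}f\cdot\Delta_{J\setminus j}g_y$. The only (cosmetic) difference is bookkeeping order: the paper splits the weight $|J|-|K|$ from the $f_y$-power group and folds the $-|K|$ piece into the $\prod_i f_i$ group to assemble $\Delta_{jk}f$, whereas you split the $\prod_i f_i$ group via $f_y\Delta_k f_j=\Delta_{jk}f+f_j\Delta_k f_y$ (exactly the identity the paper itself highlights right after its proof) and fold the $+|K|$ piece the other way.
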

We have introduced the shorthand $J \setminus j$ for the difference $J\setminus\{j\}$, which is obtained, like the set difference, by subtracting 1 from the multiplicity of $j$ in $J$. The fact that this term involves the multiplier $\eta_{j}$ implies that it only involves indices $j$ that do appear in $J$, so that we do not have to consider negative multiplicities in $J \setminus j$. We note again that the last multiplier here is $\Delta_{J \setminus j}(g_{y})$, and not $(\Delta_{J \setminus j}g)_{y}$.

\begin{proof}
We let $f_{y}^{2}\frac{\partial}{\partial x_{k}}$ operate on the term associated with the subset $K \subseteq J$ in Definition \ref{DeltaJg}, and apply Leibniz' rule, to get 3 types of terms: One from acting on the derivative of $g$, one from differentiating the power of $f_{y}$, and one from the product $\prod_{i=1}^{N}f_{i}^{\kappa_{i}}$, where $\kappa_{i}$ is the multiplicity to which $i$ appears in $K$. The first term (associated with $K$) is
\begin{equation}
\textstyle{(-1)^{|K|}\binom{J}{K}\big[g_{[(J+k) \setminus K]y^{|K|}}\!\cdot\!\prod_{i}f_{i}^{\kappa_{i}}\!\cdot\!f_{y}^{|J|-|K|+2}\!-g_{(J \setminus K)y^{|K|}}\!\cdot\!\prod_{i}f_{i}^{\kappa_{i}}\!\cdot\!f_{k}f_{y}^{|J|-|K|+1}\big]}, \label{Kderg}
\end{equation}
and note that in the second summand we can write $J \setminus K=(J+k)\setminus(K+k)$, the product of the $f_{i}$'s with $f_{k}$ corresponds to the product associated with $K+k$ in Definition \ref{DeltaJg} and so does the extra sign, and $|J|-|K|+1$ equals $|J|-|K+k|+2$. Now, a multiset $L \subseteq J+k$ can either be presented as a multiset $K \subseteq J$ (if the multiplicity to which it contains $k$ is not already $\eta_{k}+1$) or as $K+k$ for a multiset $K \subseteq J$ (if $k$ does show up in this multiset), and then the first term in Definition \ref{DeltaJg} with $K=L$ (if $L \subseteq J$) and the second term there with $K=L \setminus k$ (if $k \in L$) give the same expression, with the multipliers $\binom{J}{L}$ and $\binom{J}{L \setminus k}$ respectively. But if the multiplicity to which $1 \leq i \leq N$ shows up in $L$ is $\lambda_{i}$ then the two terms are $\prod_{i \neq k}\binom{\eta_{i}}{\lambda_{i}}$ times $\binom{\eta_{k}}{\lambda_{k}}$ and $\binom{\eta_{k}}{\lambda_{k}-1}$ respectively. Thus their sum is $\prod_{i \neq k}\binom{\eta_{i}}{\lambda_{i}}$ times $\binom{\eta_{k}+1}{\lambda_{k}}$ (also when $\lambda_{k}$ is $\eta_{k}+1$ or 0), namely $\binom{J+k}{L}$, and as the power of $f_{y}$ is $|J+k|-|L|+1$, Definition \ref{DeltaJg} shows that this gives the first asserted term.

The other terms from differentiating the summand with index $K$, having multiplicities $\{\kappa_{i}\}_{i=1}^{N}$ (with the derivative acting either on some $f_{i}$ or on the power of $f_{y}$), combine to give $(-1)^{|K|}\binom{J}{K}g_{(J \setminus K)y^{|K|}}f_{y}^{|J|-|K|}$ times
\begin{equation}
\textstyle{\sum_{j=1}^{N}\kappa_{j}(f_{jk}f_{y}^{2}-f_{jy}f_{k}f_{y})\prod_{i=1}^{N}f_{i}^{\kappa_{i}-\delta_{i,j}}+(|J|-|K|)(f_{ky}f_{y}-f_{yy}f_{k})\prod_{i=1}^{N}f_{i}^{\kappa_{i}}}, \label{kderf}
\end{equation}
where $\delta_{i,j}$ in the exponent in the first term is the usual Kronecker $\delta$-symbol (and we get no negative exponents, since if $\kappa_{j}=0$ then the multiplier $\kappa_{j}$ annihilates the expression with $f_{j}^{-1}$). Since the expression in parentheses in the second term of Equation \eqref{kderf} equals $\Delta_{k}f_{y}$, taking the part that is multiplied by $|J|$, multiplying by the external coefficient, and summing over $K$ produces the second required term by Definition \ref{DeltaJg}. In the part that is multiplied by $|K|$, we replace this multiplier by $\sum_{j \in K}\kappa_{j}$, and extract the factor $f_{j}$ as well. This gives, for every $j$, the product $\kappa_{j}\prod_{i=1}^{N}f_{i}^{\kappa_{i}-\delta_{i,j}}$ times an expression which reduces to $\Delta_{jk}f$. Now, the latter product corresponds to the multiplicities of $K \setminus j$, the difference $J \setminus K$ is the complement of that set in $J \setminus j$ (with the same presentation of the difference of cardinalities in the exponent of $f_{y}$), and $\kappa_{j}\binom{J}{K}$ equals $\eta_{j}\binom{J \setminus j}{K \setminus j}$ because the binomial coefficients associated with $i \neq j$ are the same on both sides and for $j$ we use the well-known identity $\kappa_{j}\binom{\eta_{j}}{\kappa_{j}}=\eta_{j}\binom{\eta_{j}-1}{\kappa_{j}-1}$. Since the multisets that are contained in $J \setminus j$ are precisely the multisets $K \setminus j$ for a multiset $K \subseteq J$ which contains $j$, and the sign $(-1)^{|K \setminus j|}$ is $-(-1)^{|K|}$, summing over such $K \subseteq J$ indeed yields the remaining desired term via Definition \ref{DeltaJg}. This completes the proof of the lemma.
\end{proof}
The case of empty $J$ in Lemma \ref{derDelta} is just $f_{y}^{2}$ times the identity $\frac{\partial}{\partial x_{k}}g=g_{k}-\frac{g_{y}f_{k}}{f_{y}}$ from above, and the establishment of the last term in the proof of that lemma was based on proving that $\Delta_{jk}f$ can be obtained as $f_{y}\Delta_{k}f_{j}-f_{j}\Delta_{k}f_{y}$. This relation generalizes to the identity stating that $\Delta_{J+j}g=f_{y}\Delta_{J}g_{j}-f_{j}\Delta_{J}g_{y}$ for any multiset $J$ and index $j$, which one can prove using the fact that the subsets of $J+j$ are $K$ and $K+j$ for $K \subseteq J$ and manipulations of binomial coefficients like in the proof of Lemma \ref{derDelta}, but we omit the details since we shall not use this formula. The non-vanishing of Equation \eqref{killsym} in our evaluation of $f_{y}^{5}y_{ijk}$ is equivalent to the fact that the term $2\Delta_{k}f_{y}\cdot\Delta_{ij}f$ from Lemma \ref{derDelta} (with $g=f$ and $J$ of size 2) does not cancel with the terms $-\Delta_{ik}f\cdot\Delta_{j}f_{y}-\Delta_{jk}f\cdot\Delta_{i}f_{y}$, unlike the symmetric situation for ordinary derivatives, which gave a special case of cancelation happening for no other parameters. The fact that for $f_{y}^{5}y_{ijk}$ we obtain an expression that is symmetric in the indices (as we should) follows from the precise analysis of the coefficients.

We recall the value of $f_{y}^{5}y_{ijk}$ from Equation \eqref{yder3fin}, and wish to apply Lemmas \ref{denfree} and \ref{derDelta} in order to evaluate $f_{y}^{7}y_{ijkl}$, where $i$, $j$, $k$ and $l$ are distinct indices (so that no binomial coefficients are involved). We begin with the parts arising from $-f_{y}\Delta_{ijk}f$, where for the action of $f_{y}^{2}\frac{\partial}{\partial x_{l}}$ we apply Leibniz' rule, yielding $-f_{y}\Delta_{l}f_{y}\cdot\Delta_{ijk}f$ from the differentiation of $f_{y}$, plus $-f_{y}$ times the expression \[f_{y}\Delta_{ijkl}f+3\Delta_{l}f_{y}\cdot\Delta_{ijk}f-\Delta_{il}f\cdot\Delta_{jk}f_{y}-\Delta_{jl}f\cdot\Delta_{ik}f_{y}-\Delta_{kl}f\cdot\Delta_{ij}f_{y}\] from Lemma \ref{derDelta}, and the term $+5\Delta_{l}f_{y}\Delta_{ijk}f$ from the second summand in Lemma \ref{denfree}. These combine to \[-f_{y}^{2}\Delta_{ijkl}f+\Delta_{l}f_{y}\cdot\Delta_{ijk}f+f_{y}\Delta_{il}f\cdot\Delta_{jk}f_{y}+f_{y}\Delta_{jl}f\cdot\Delta_{ik}f_{y}+f_{y}\Delta_{kl}f\cdot\Delta_{ij}f_{y}.\] Doing the same with the summand $\Delta_{k}f_{y}\cdot\Delta_{ij}f$ from Equation \eqref{yder3fin} produces the sum of the expressions $[f_{y}\Delta_{kl}f_{y}+\Delta_{l}f_{y}\cdot\Delta_{k}f_{y}-\Delta_{kl}f \cdot f_{yy}]\Delta_{ij}f$, \[\Delta_{k}f_{y}[f_{y}\Delta_{ijl}f+2\Delta_{l}f_{y}\cdot\Delta_{ij}f-\Delta_{il}f\cdot\Delta_{j}f_{y}-\Delta_{jl}f\cdot\Delta_{i}f_{y}],\] and $-5\Delta_{l}f_{y}\cdot\Delta_{k}f_{y}\cdot\Delta_{ij}f$, so that the total coefficient of $\Delta_{l}f_{y}\cdot\Delta_{k}f_{y}\cdot\Delta_{ij}f$ becomes $-2$. Applying the same procedure with the other two terms from Equation \eqref{yder3fin} gives similar expressions (with $k$ interchanged with $j$ or with $i$), where in particular each of the summands $\Delta_{k}f_{y}\cdot\Delta_{il}f\cdot\Delta_{j}f_{y}$ and $\Delta_{k}f_{y}\cdot\Delta_{jl}f\cdot\Delta_{i}f_{y}$ shows up again in one of these expressions. Gathering all these terms, we obtain that $f_{y}^{7}y_{ijkl}$ equals $-f_{y}^{2}\Delta_{ijkl}f$ plus
\begin{equation}
[f_{y}\Delta_{l}f_{y}\cdot\Delta_{ijk}f]^{\mathrm{s}}+[f_{y}\Delta_{ij}f_{y}\cdot\Delta_{kl}f]^{\mathrm{s}}-[f_{yy}\Delta_{ij}f\cdot\Delta_{kl}f]^{\mathrm{s}}-2[\Delta_{k}f_{y}\cdot\Delta_{l}f_{y}\cdot\Delta_{ij}f]^{\mathrm{s}}, \label{yder4}
\end{equation}
where the superscript $\mathrm{s}$ means adding to it all the distinct elements required for getting an expression that is symmetric under any permutation of the indices $i$, $j$, $k$, and $l$. When some of the indices $i$, $j$, $k$, and $l$ coincide, some of the terms inside each such ``$\mathrm{s}$-orbit'' coincide, so that we obtain less terms with higher multiplicities, and the sum of the multiplicities is always 4, 6, 3, and 6 for the respective summands from Equation \eqref{yder4} (this, times the coefficient 2 in the last summand, is in correspondence with the coefficients in Equation (7) of \cite{[Z]}, which we obtain again when all the indices coincide and we differentiate with respect to a single variable).

The explicit formula for $y_{I}$, or equivalently $f_{y}^{2n-1}y_{I}$ when $n=|I|\geq2$, will be established, as in \cite{[Z]}, in two steps. First we shall determine which expressions can show up in it, and only later we will find the explicit coefficient with which every expression does show up. The fact that the expression is symmetric under permuting the elements of $I$ will be a consequence of the second step only, since the coefficients in question will have this symmetry property.

For the first step, we begin by considering the examples that we already have. When $n=2$ we saw that $f_{y}^{3}y_{I}=-\Delta_{I}f$, consisting of one multiplier, with index $I$, and no external derivative with respect to $y$. The expression from Equation \eqref{yder3fin} shows that for $n=3$ we can write $f_{y}^{5}y_{I}$ as $-f_{y}\Delta_{I}f+\sum_{i \in I}\Delta_{i}f_{y}\cdot\Delta_{I \setminus i}f$ (the sum over $i$ is with multiplicities when $I$ is a multiset, and should be interpreted in general as $\sum_{i=1}^{N}\nu_{i}\Delta_{i}f_{y}\cdot\Delta_{I \setminus i}f$), in which each summand is the product of two terms, one of which involves $f$ and the other $f_{y}$, and the union of the indices is $I$. In case $n=4$, the terms from Equation \eqref{yder4} for a set $I$ without multiplicities are $-f_{y}^{2}\Delta_{I}f$, $\sum_{i \in I}f_{y}\Delta_{i}f_{y}\cdot\Delta_{I \setminus i}f$ (with a plus sign), $\sum_{J}f_{y}\Delta_{J}f_{y}\cdot\Delta_{I \setminus J}f$ for $J \subseteq I$ of size 2, the sum over partitions of $I$ into unmarked sets $J_{1}$ and $J_{2}$ of size 2 of $-f_{yy}\Delta_{J_{1}}f\cdot\Delta_{J_{2}}f$, and $-2\sum_{J}f_{y}\Delta_{I \setminus J}f\cdot\prod_{i \in J}\Delta_{i}f_{y}$, the sum of which again taken over $J \subseteq I$ of size 2 (the latter can be seen as partitions of $I$ into a set $K$ of size 2 and two unmarked sets $J_{1}$ and $J_{2}$ of size 1, with the expression being $-2\Delta_{J_{1}}f_{y}\Delta_{J_{2}}f_{y}\Delta_{K}f$). When $I$ is a multiset of size 4, the same description is valid, with sums involving the corresponding multiplicities, and we shall not write these explicitly at this point. Each such product is of three expressions, the total number of $y$-indices is 2, and the sets in the $\Delta$-indices forming a partition of $I$, where in partitions involving unmarked sets, the $\Delta$-operators of the unmarked sets of same type act on the same function (i.e., on $f$ with the same amount of $y$-indices). The general form of a term in the expression of order $n$ is thus as follows.
\begin{prop}
If $|I|=n\geq2$ then $f_{y}^{2n-1}y_{I}$ is a linear combination of expressions of the sort $\prod_{e=1}^{n-1}\Delta_{J_{e}}f_{y^{r_{e}}}$, satisfying the following conditions: In each such term the multisets $J_{e}$ sum to  $I$ as multisets (some $J_{e}$'s can be empty), we have $\sum_{e=1}^{n-1}r_{e}=n-2$, and the expressions with $r_{e}=0$ cannot show up with $J_{e}$ empty or a singleton. \label{formnoden}
\end{prop}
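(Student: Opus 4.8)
The plan is to prove Proposition~\ref{formnoden} by induction on $n=|I|$, stepping from size $n$ to size $n+1$ by means of Lemma~\ref{denfree} and differentiating the individual building blocks through Lemma~\ref{derDelta}. Throughout I would track four invariants of every term $\prod_{e=1}^{n-1}\Delta_{J_{e}}f_{y^{r_{e}}}$: that it has exactly $n-1$ factors; that the multisets $J_{e}$ sum to $I$; that $\sum_{e}r_{e}=n-2$; and that no factor with $r_{e}=0$ has $J_{e}$ empty or a singleton. The base case $n=2$ is immediate from Equation~\eqref{yder2}, where $f_{y}^{3}y_{I}=-\Delta_{I}f$ is a single factor ($n-1=1$) with $J_{1}=I$ of size $2$ and $r_{1}=0$; the cases $n=3$ and $n=4$ in Equations~\eqref{yder3fin} and~\eqref{yder4} serve as a further check.

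For the inductive step I would assume the statement for multisets of size $n$ and let $I+k$ have size $n+1$. By Lemma~\ref{denfree},
\[ f_{y}^{2n+1}y_{I+k}=f_{y}^{2}\tfrac{\partial}{\partial x_{k}}\big(f_{y}^{2n-1}y_{I}\big)-(2n-1)\,\Delta_{k}f_{y}\cdot\big(f_{y}^{2n-1}y_{I}\big), \]
where I use that the factor $f_{ky}f_{y}-f_{yy}f_{k}$ appearing in that lemma is exactly $\Delta_{k}f_{y}=\Delta_{\{k\}}f_{y^{1}}$ by Definition~\ref{DeltaJg}. Substituting the inductive expression for $f_{y}^{2n-1}y_{I}$ into both summands reduces everything to two operations on a single term $\prod_{e=1}^{n-1}\Delta_{J_{e}}f_{y^{r_{e}}}$: multiplication by $\Delta_{\{k\}}f_{y^{1}}$, and the application of $f_{y}^{2}\tfrac{\partial}{\partial x_{k}}$.

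The multiplication is the easy operation: adjoining the admissible factor $\Delta_{\{k\}}f_{y^{1}}$ to a term with $n-1$ factors yields $n$ factors, raises the index sum from $I$ to $I+k$, and raises $\sum_{e}r_{e}$ from $n-2$ to $n-1=(n+1)-2$. For the differentiation I would apply the product rule so that $f_{y}^{2}\tfrac{\partial}{\partial x_{k}}$ distributes over the $n-1$ factors, and evaluate each $f_{y}^{2}\tfrac{\partial}{\partial x_{k}}\Delta_{J_{e'}}f_{y^{r_{e'}}}$ by Lemma~\ref{derDelta} (with $g=f_{y^{r_{e'}}}$, so that $g_{y}=f_{y^{r_{e'}+1}}$). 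This produces three kinds of summand, governed respectively by $f_{y}\Delta_{J_{e'}+k}f_{y^{r_{e'}}}$, by $|J_{e'}|\,\Delta_{\{k\}}f_{y^{1}}\cdot\Delta_{J_{e'}}f_{y^{r_{e'}}}$, and by $-\sum_{j}\eta_{j}\,\Delta_{jk}f\cdot\Delta_{J_{e'}\setminus j}f_{y^{r_{e'}+1}}$. The bookkeeping here is to read the new multipliers as admissible factors: $f_{y}=\Delta_{\emptyset}f_{y^{1}}$ and $\Delta_{\{k\}}f_{y^{1}}$ both have $y$-order $1$, while $\Delta_{jk}f$ has index set of size exactly $2$ and $y$-order $0$. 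With this reading each summand again has $n$ factors, whose indices sum to $I+k$ and whose $y$-orders sum to $(n-2)+1=n-1$, so invariants (i)--(iii) are maintained.

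The step I expect to demand the most care is the last invariant, namely that no factor with $r_{e}=0$ acquires an empty or singleton index. I would verify it by examining which operations can create or preserve such a factor. The only $r=0$ factor ever produced afresh is $\Delta_{jk}f$, whose index set has size exactly $2$; the operation $J_{e'}\mapsto J_{e'}+k$ only enlarges an index set, so an inductively valid $r=0$ factor stays valid; and every index-shrinking operation $J_{e'}\mapsto J_{e'}\setminus j$ simultaneously raises the $y$-order from $r_{e'}$ to $r_{e'}+1\geq1$, which exempts the resulting factor from the constraint. The two extra multipliers $\Delta_{\{k\}}f_{y^{1}}$ and $\Delta_{\emptyset}f_{y^{1}}$ are likewise of $y$-order $1>0$. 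This closes the induction. I would stress that the argument controls only the \emph{shape} of the terms, not their coefficients nor the symmetry of the total expression under permutations of $I$; those are precisely the points deferred, as announced, to the coefficient computation in Section~\ref{DetCoeff}.
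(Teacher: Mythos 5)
Your proof is correct and follows essentially the same route as the paper's own argument: induction on $n$ via Lemma~\ref{denfree}, with Leibniz' rule distributing $f_{y}^{2}\tfrac{\partial}{\partial x_{k}}$ over the factors and Lemma~\ref{derDelta} supplying the three kinds of replacement, while tracking the count of factors, the multiset sum, the total $y$-order, and the exclusion of $f$ and $\Delta_{i}f$. Your explicit verification that index-shrinking operations always raise the $y$-order is a slightly more detailed rendering of the paper's closing observation that no expression $f$ or $\Delta_{i}f$ ever appears, but the substance is identical.
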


\begin{proof}
For $n=2$ the single term $\Delta_{I}f$ satisfies the required properties (and one can check that so do the terms showing up in our explicit expressions when $n$ is 3 or 4), and we assume, by induction, that it holds for a multiset $I$ of cardinality $n$. We consider a multiset of cardinality $n+1$ which is of the form $I+k$ for some index $k$, and then Lemma \ref{denfree} expresses $f_{y}^{2n-1}y_{I}$ as the sum of two terms, the second of which is a constant multiple of $\Delta_{k}f_{y}$ times the original expression $f_{y}^{2n-1}y_{I}$. Since this multiplies every product from $f_{y}^{2n-1}y_{I}$ by one expression, with the extra index $k$ and with one additional index $y$, we indeed obtain a linear combination of products of $n$ terms, with a total of $n-1$ indices of $y$, and with $\Delta$-indices whose multiset sum is $I+k$. For the first term we let $f_{y}^{2}\frac{\partial}{\partial x_{k}}$ act via Leibniz' rule on every product from $f_{y}^{2n-1}y_{I}$, yielding products of $n-2$ elements times the derivative that we evaluated in Lemma \ref{derDelta}. Each such evaluation yields three terms, the second of which is the original expression times $\Delta_{k}f_{y}$, with which we have already dealt. The first term from Lemma \ref{derDelta} adds the index $k$ to the corresponding multiset $J_{e}$ (thus to the multiset sum $I$), and with the extra term $f_{y}$, having the empty multiset in the $\Delta$-index and one index $y$ more, we again get elements of the desired sort. Finally, for each $1 \leq j \leq N$ whose multiplicity in $J_{e}$ is positive, the third term from Lemma \ref{derDelta} replaces $\Delta_{J_{e}}f_{y^{r_{e}}}$ by two terms (increasing the number of multipliers by 1 yet again), one with $j$ missing from $J_{e}$ but the additional index $y$, and the other one, $\Delta_{jk}f$, with the remaining indices to complete to the multiset sum $I+k$ and no $y$-index. As none of the expressions $f$ or $\Delta_{i}f$ shows up in this process, this proves the proposition.
\end{proof}

The proof of Proposition \ref{formnoden} yields, in its induction step, either a multiplier of $f_{y}$ (having empty $J_{e}$ and $r_{e}=1$), or a multiplier $\Delta_{k}f_{y}$ (with $J_{e}$ a singleton and $r_{e}=1$), or a multiplier $\Delta_{jk}f$ (whose multiset $J_{e}$ has size 2 but $r_{e}=0$). The fact that every combination from that proposition must indeed contain one such multiplier essentially follows from Proposition 5 of \cite{[Z]} using cardinalities, but we reproduce the argument in our setting. Note that in \cite{[Z]} we considered coordinate-wise sums of vectors of the form $\sqbinom{l}{r}$ with $l$ and $r$ in $\mathbb{N}$. Here we have sets and numbers, but for keeping the notation we shall write $\sqbinom{J}{r}+\sqbinom{K}{s}$ for $\sqbinom{J+K}{r+s}$ where $J$ and $K$ are multisets of indices between 1 and $N$, $J+K$ is their multiset sum, and $r$ and $s$ are non-negative integers. The conditions from Proposition \ref{formnoden} mean that each term corresponds to an unordered collection of $n-1$ ``vectors'' $\sqbinom{J_{e}}{r_{e}}$, with $\sum_{e=1}^{n-1}\sqbinom{J_{e}}{r_{e}}=\sqbinom{I}{n-2}$.
\begin{prop}
Assume that we are given, for every $1 \leq e \leq n-1$, a pair $\sqbinom{J_{e}}{r_{e}}$, and that if $r_{e}=0$ then the multiset $J_{e}$ is neither empty nor a singleton. Assume further that the vectors $\sqbinom{J_{e}}{r_{e}}$ sum to $\sqbinom{I}{n-2}$. Then either there exists $e$ with $J_{e}$ empty and $r_{e}=1$, or there is $e$ with $|J_{e}|=1$ and $r_{e}=1$, or we have $|J_{e}|=2$ and $r_{e}=0$ for some $e$. \label{genrec}
\end{prop}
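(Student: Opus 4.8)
The plan is to argue by contradiction, assuming that none of the three asserted configurations occurs and deriving a contradiction from the two coordinates of the identity $\sum_{e=1}^{n-1}\sqbinom{J_{e}}{r_{e}}=\sqbinom{I}{n-2}$, namely from $\sum_{e=1}^{n-1}|J_{e}|=|I|=n$ and $\sum_{e=1}^{n-1}r_{e}=n-2$. The key observation is a simple averaging: there are $n-1$ pairs, while the grand total $\sum_{e}(|J_{e}|+r_{e})$ equals $n+(n-2)=2(n-1)$, so the mean value of $|J_{e}|+r_{e}$ is exactly $2$. Accordingly I would attach to each pair the integer $c_{e}:=|J_{e}|+r_{e}-2$ and record at the outset that $\sum_{e=1}^{n-1}c_{e}=n+(n-2)-2(n-1)=0$.

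First I would show that, once the first configuration is excluded, every $c_{e}$ is non-negative. Indeed $c_{e}<0$ forces $|J_{e}|+r_{e}\in\{0,1\}$, and these few possibilities can be ruled out by inspection: the pairs $(|J_{e}|,r_{e})=(0,0)$ and $(1,0)$ both have $r_{e}=0$ together with $|J_{e}|\leq1$, and are excluded by the standing hypothesis that $r_{e}=0$ implies $|J_{e}|\geq2$, whereas the only remaining candidate $(|J_{e}|,r_{e})=(0,1)$ is precisely the first forbidden configuration. Hence all $c_{e}\geq0$, and combined with $\sum_{e}c_{e}=0$ this forces $c_{e}=0$, i.e.\ $|J_{e}|+r_{e}=2$, for every $e$.

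It then remains to note that $|J_{e}|+r_{e}=2$ leaves only the three pairs $(|J_{e}|,r_{e})\in\{(0,2),(1,1),(2,0)\}$, of which $(1,1)$ is the second forbidden configuration and $(2,0)$ is the third. Since we have assumed all three configurations absent, each pair must be $(0,2)$, and then $\sum_{e}|J_{e}|=0$; but this contradicts $\sum_{e}|J_{e}|=n$ with $n\geq2$ (equivalently, it would force $\sum_{e}r_{e}=2(n-1)$ instead of $n-2$), which is the desired contradiction.

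I expect the only delicate point to be the bookkeeping in the middle step: one must invoke both the a priori restriction ``$r_{e}=0\Rightarrow|J_{e}|\geq2$'' and the absence of the first configuration in order to guarantee $c_{e}\geq0$, since it is exactly the excluded pair $(0,1)$ that would otherwise render some $c_{e}$ negative and destroy the averaging identity $\sum_{e}c_{e}=0$ on which the whole argument hinges.
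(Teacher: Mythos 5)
Your proof is correct and is essentially the same argument as the paper's: both exploit the identity $\sum_{e}(|J_{e}|+r_{e})=2n-2$, rule out pairs with $|J_{e}|+r_{e}<2$ using the standing hypothesis together with the exclusion of $\sqbinom{\emptyset}{1}$, conclude $|J_{e}|+r_{e}=2$ for all $e$, and eliminate the all-$(0,2)$ case by the first coordinate of the sum. Your only deviations are cosmetic: phrasing it as a contradiction and normalizing via $c_{e}=|J_{e}|+r_{e}-2$, whereas the paper argues directly.
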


\begin{proof}
The proof follows that of Proposition 5 of \cite{[Z]}. Note that the assumption on the sum yields $\sum_{e=1}^{n-1}|J_{e}|+\sum_{e=1}^{n-1}r_{e}=|I|+n-2=2n-2$. Assume now that the first case does not happen (otherwise we are done). Then we have $|J_{e}|+r_{e}\geq2$ for every $1 \leq e \leq n-1$ since the other two cases in which this inequality does not hold are not allowed. Summing over $e$ we obtain that $\sum_{e=1}^{n-1}|J_{e}|+\sum_{e=1}^{n-1}r_{e}\geq2n-2$, but as we know that the left hand side equals $2n-2$, we deduce that all the inequalities are equalities. The case where $J_{e}$ is empty and $r_{e}=2$ for every $e$ would yield a sum of $\sqbinom{\emptyset}{2n-2}$, and as this is different from $\sqbinom{I}{n-2}$, this situation cannot occur. Combining this with the sum 2 property yields that either $|J_{e}|=1$ and $r_{e}=1$ for some $e$, or there is $e$ with $|J_{e}|=2$ and $r_{e}=0$, as desired. This proves the proposition.
\end{proof}

Proposition \ref{genrec} does not yet assure us that any product satisfying the conditions from Proposition \ref{formnoden} indeed shows up in the formula for $f_{y}^{2n-1}y_{I}$ (this will be a consequence of the not-vanishing of the coefficients from Theorem \ref{coeffval} below). But we do see it in the cases where we already have the complete formula. When $n=2$ the only possible product of a single element corresponds to $\sqbinom{I}{0}$, and in case $n=3$ we can decompose $\sqbinom{I}{1}$, up to the order of the summands, as either $\sqbinom{I}{0}+\sqbinom{0}{1}$, or $\sqbinom{I \setminus i}{0}+\sqbinom{i}{1}$ for $i \in I$ (where we write $i$ in case the set $J_{e}$ is the singleton $\{i\}$). For $n=4$ the unordered sums producing $\sqbinom{I}{2}$ are $\sqbinom{I}{0}+\sqbinom{0}{1}+\sqbinom{0}{1}$, $\sqbinom{I \setminus i}{0}+\sqbinom{i}{1}+\sqbinom{0}{1}$ for each $i \in I$, $\sqbinom{J}{1}+\sqbinom{I \setminus J}{0}+\sqbinom{0}{1}$ for a multiset $J \subseteq I$ of cardinality 2, the combination $\sqbinom{J_{1}}{0}+\sqbinom{J_{2}}{0}+\sqbinom{0}{2}$ where $J_{1}+J_{2}$ is an unordered partition of $I$ into two multisets of cardinality 2, and $\sqbinom{I\setminus\{i,j\}}{0}+\sqbinom{i}{1}+\sqbinom{j}{1}$ for two indices $i$ and $j$ with $i$ and $j$ in $I$ when they are distinct or with $i=j$ of multiplicity at least 2 in $I$ (we write $I\setminus\{i,j\}$ and not $I \setminus ij$ to avoid the ambiguity with $(I \setminus i)j$, which may look like $(I \setminus i)+j$). Comparing these decompositions with the expressions for the derivatives given in Equations \eqref{yder2}, \eqref{yder3fin}, and \eqref{yder4} exemplifies that the terms in the formula for $f_{y}^{2n-1}y_{I}$ are expected to be in one-to-one correspondence with the products satisfying the conditions from Propositions \ref{formnoden} and \ref{genrec}.

Note that the expression that we seek is $y_{I}$, and not the product $f_{y}^{2n-1}y_{I}$, so that we consider the following modification. As the vectors $\sqbinom{J_{e}}{0}$ with $|J_{e}|\leq1$ are excluded, every vector $\sqbinom{J_{e}}{r_{e}}$ with $|J_{e}|+r_{e}<2$ is $\sqbinom{\emptyset}{1}$ and corresponds to $f_{y}$, which is expected to show up in the denominator of the formula for $y_{I}$ rather than as one of the multipliers. In order to be able to write the expressions for both $f_{y}^{2n-1}y_{I}$ and $y_{I}$, we recall that partitions can be given in terms of multiplicities, and make the following definition.
\begin{defn}
Let a set $I$ be given, with multiplicities $\{\nu_{i}\}_{i=1}^{N}$, and assume that $n=|I|=\sum_{i=1}^{N}\nu_{i}\geq2$. Then $\tilde{A}_{I}$ stands for the set of partitions of $\sqbinom{I}{n-2}$ having $n-1$ (unordered) parts, none of which is of the form $\sqbinom{\emptyset}{0}$ or $\sqbinom{i}{0}$ for a singleton $\{i\}$. We shall write elements $\tilde{\alpha}\in\tilde{A}_{I}$ using multiplicities $m_{J,r}\geq0$ for multisets $J$ and integers $r\geq0$, with $m_{0,0}=m_{i,0}=0$ for any $i \in I$ (where the index $i$ stands for $J=\{i\}$), and such that the equalities $\sum_{J,r}m_{J,r}=n-1$ and $\sum_{J,r}rm_{J,r}=n-2$ of numbers hold, as well as the multiset sum equality $\sum_{J,r}m_{J,r}J=I$. We also define $A_{I}$ to be the set of multiplicities $\{m_{J,r}\}_{\{|J|+r\geq2\}}$ for which we have $\sum_{J,r}m_{J,r}J=I$ as multisets and $\sum_{J,r}(r-1)m_{J,r}=-1$. \label{AIdef}
\end{defn}

Like in Lemma 7 of \cite{[Z]}, we shall need the essential finiteness of elements of the sets $\tilde{A}_{I}$ and $A_{I}$ from Definition \ref{AIdef}, as well as a natural identification between these sets.
\begin{lem}
For any multiset $I$ of size $n\geq2$, and every element of $\tilde{A}_{I}$ or of $A_{I}$, we have $m_{J,r}=0$ for all but finitely many vectors $\sqbinom{J}{r}$. Moreover, there is a canonical isomorphism between $\tilde{A}_{I}$ and $A_{I}$. \label{AItildeAI}
\end{lem}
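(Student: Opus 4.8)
The plan is to treat the two assertions separately, establishing finiteness first and then building the claimed bijection on top of the numerical bounds obtained there; the two halves turn out to be linked by a single inequality.

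For finiteness, the common observation is that the multiset-sum constraint $\sum_{J,r} m_{J,r} J = I$, together with $m_{J,r} \geq 0$, forces every $J$ with $m_{J,r} > 0$ to be a submultiset of $I$, and $I$ being finite there are only finitely many such $J$. It then remains to bound $r$. For $\tilde{A}_I$ this is immediate: the constraint $\sum_{J,r} r m_{J,r} = n-2$ with nonnegativity gives $r \leq n-2$ whenever $m_{J,r} > 0$, so only finitely many pairs $\sqbinom{J}{r}$ can occur. For $A_I$ I would first bound the total number of parts $M := \sum_{J,r} m_{J,r}$: writing $|I| = \sum_{J,r} |J| m_{J,r} = n$ and using $|J| + r \geq 2$ for every admissible part, summation with multiplicities gives $n + \sum_{J,r} r m_{J,r} \geq 2M$; combined with the defining relation $\sum_{J,r} (r-1) m_{J,r} = -1$, i.e. $\sum_{J,r} r m_{J,r} = M - 1$, this yields $n + M - 1 \geq 2M$, hence $M \leq n-1$. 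From $\sum_{J,r} r m_{J,r} = M - 1 \leq n-2$ one again gets $r \leq n-2$ for each occurring part, completing finiteness for $A_I$.

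For the isomorphism, the key point is that the only vector admissible as a part in $\tilde{A}_I$ but excluded from $A_I$ is $\sqbinom{\emptyset}{1}$: among the three vectors killed by the condition $|J|+r \geq 2$, the two vectors $\sqbinom{\emptyset}{0}$ and $\sqbinom{i}{0}$ are already excluded in $\tilde{A}_I$, so the admissible parts of the two sets differ precisely in $\sqbinom{\emptyset}{1}$. I would therefore define $\tilde{A}_I \to A_I$ by discarding the $\sqbinom{\emptyset}{1}$-parts, setting $m_{\emptyset,1} = 0$ and keeping every other multiplicity. Since $\emptyset$ contributes nothing to the multiset sum, $\sum_{J,r} m_{J,r} J = I$ is preserved, and since $\sqbinom{\emptyset}{1}$ contributes $(1-1)m_{\emptyset,1} = 0$ to $\sum_{J,r}(r-1)m_{J,r}$, the value of that sum is unchanged; as the two defining equalities of $\tilde{A}_I$ force it to be $(n-2)-(n-1) = -1$, the image lies in $A_I$. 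The inverse $A_I \to \tilde{A}_I$ restores the $\sqbinom{\emptyset}{1}$-parts with multiplicity $m_{\emptyset,1} := n - 1 - M$, where $M$ is the number of parts of the given element; using $R = M-1$ with $R := \sum_{J,r} r m_{J,r}$ one checks that the two equalities $\sum_{J,r} m_{J,r} = n-1$ and $\sum_{J,r} r m_{J,r} = n-2$ both hold after this adjustment.

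The hard part, though it is short, is verifying that the inverse map is well defined, namely that $m_{\emptyset,1} = n - 1 - M$ is nonnegative; this is exactly the bound $M \leq n-1$ proved in the finiteness step for $A_I$, which is why the two halves of the lemma rest on the same inequality. Once nonnegativity is secured, checking that the maps are mutually inverse is routine: composing $\tilde{A}_I \to A_I \to \tilde{A}_I$ restores the discarded multiplicity because every element of $\tilde{A}_I$ has exactly $n-1$ parts, while composing $A_I \to \tilde{A}_I \to A_I$ is the identity since an element of $A_I$ carries no $\sqbinom{\emptyset}{1}$-part to begin with.
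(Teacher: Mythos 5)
Your proof is correct and follows essentially the same route as the paper: both rest on the inequality $\sum_{\{|J|+r\geq2\}}m_{J,r}\leq\sum_{\{|J|+r\geq2\}}(|J|+r-1)m_{J,r}=n-1$ (your $M\leq n-1$), and both realize the isomorphism as the map forgetting $m_{\emptyset,1}$, with that same inequality guaranteeing the restored multiplicity $m_{\emptyset,1}=n-1-M$ is non-negative. The only cosmetic difference is that your finiteness step additionally bounds $J\subseteq I$ and $r\leq n-2$ separately, whereas the paper deduces finiteness directly from the bound on the total number of parts.
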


\begin{proof}
Note that for an element $\{m_{J,r}\}_{J,r}$ of either set, the multiset equality yields $\sum_{J,r}|J|m_{J,r}=n$. Thus, if $\{m_{J,r}\}_{\{|J|+r\geq2\}}$ is an element of $A_{I}$ then the inequality condition on the indices and the fact that multiplicities are non-negative bounds $\sum_{\{|J|+r\geq2\}}m_{J,r}$ by $\sum_{\{|J|+r\geq2\}}(|J|+r-1)m_{J,r}$, which equals $n-1$ by the latter equality and Definition \ref{AIdef}. The fact that we also have $\sum_{J,r}m_{J,r}=n-1$ for elements of $\tilde{A}_{I}$ yields the first assertion for elements of both sets. The isomorphism between the two sets is defined, as in \cite{[Z]}, by taking the element $\tilde{\alpha}=\{m_{J,r}\}_{J,r}\in\tilde{A}_{I}$ to $\alpha=\{m_{J,r}\}_{\{|J|+r\geq2\}}$, which means forgetting the multiplicity $m_{\emptyset,1}$ since $m_{\emptyset,0}$ and $m_{i,0}$ for singletons $\{i\}$ vanish by definition. Subtracting the two general equalities defining elements of $\tilde{A}_{I}$ in Definition \ref{AIdef} shows that such elements satisfy $\sum_{J,r}(r-1)m_{J,r}=-1$, and since $m_{\emptyset,1}$ does not contribute to any multiset sum as well as to the sum $\sum_{J,r}(r-1)m_{J,r}$, our map indeed takes $\tilde{A}_{I}$ to $A_{I}$. Moreover, given an element of $A_{I}$, in any pre-image of our element in $\tilde{A}_{I}$, the multiplicity $m_{J,r}$ is known when $|J|+r\geq2$ and we must have $n-1=\sum_{J,r}m_{J,r}=\sum_{\{|J|+r\geq2\}}m_{J,r}+m_{\emptyset,1}$, which determines $m_{\emptyset,1}$ and shows that the map is injective. Moreover, as we saw that $\sum_{\{|J|+r\geq2\}}m_{J,r} \leq n-1$ for every element of $A_{I}$, the multiplicity $m_{\emptyset,1}$ that we must add for obtaining our pre-image is non-negative, yielding the surjectivity of our map as well. The canonical map $\tilde{\alpha}\mapsto\alpha$ that we constructed is thus a bijection. This proves the lemma.
\end{proof}

We can therefore obtain the initial form of the formula for $y_{I}$.
\begin{cor}
There exist coefficients $\{c_{\alpha}\}_{\alpha \in A_{I}}$ such that $y_{I}$ is a sum of the form $\sum_{\alpha=\{m_{J,r}\}_{\{|J|+r\geq2\}} \in A_{I}}\Big[c_{\alpha}\prod_{|J|+r\geq2}(\Delta_{J}f_{y^{r}})^{m_{J,r}}\Big/f_{y}^{n+\sum_{|J|+r\geq2}m_{J,r}}\Big]$. \label{formwden}
\end{cor}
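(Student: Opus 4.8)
The plan is to start from the description of $f_y^{2n-1}y_I$ already obtained in Proposition \ref{formnoden} and merely rewrite it in terms of the multiplicity data that indexes $\tilde{A}_I$, and then transport everything to $A_I$ via the bijection of Lemma \ref{AItildeAI}. Concretely, Proposition \ref{formnoden} tells us that $f_y^{2n-1}y_I$ is a linear combination of products $\prod_{e=1}^{n-1}\Delta_{J_e}f_{y^{r_e}}$ in which the $J_e$ have multiset sum $I$, in which $\sum_e r_e=n-2$, and in which a factor with $r_e=0$ is neither $\Delta_\emptyset f$ nor $\Delta_i f$ for a singleton. Encoding each such product by the multiplicities $m_{J,r}$ that count how many of its $n-1$ factors equal $\Delta_J f_{y^r}$, these conditions are exactly the defining relations $\sum_{J,r}m_{J,r}=n-1$, $\sum_{J,r}rm_{J,r}=n-2$, $\sum_{J,r}m_{J,r}J=I$ (with $m_{\emptyset,0}=m_{i,0}=0$) of an element $\tilde\alpha\in\tilde{A}_I$ from Definition \ref{AIdef}. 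Gathering equal products, I would therefore write $f_y^{2n-1}y_I=\sum_{\tilde\alpha\in\tilde{A}_I}\tilde{c}_{\tilde\alpha}\prod_{J,r}(\Delta_J f_{y^r})^{m_{J,r}}$ for suitable scalars $\tilde{c}_{\tilde\alpha}$ (some possibly zero, since Proposition \ref{genrec} does not yet guarantee that every admissible term occurs).

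The next step is to isolate the powers of $f_y$. Among the vectors $\sqbinom{J}{r}$ permitted in $\tilde{A}_I$, the only one with $|J|+r<2$ is $\sqbinom{\emptyset}{1}$, whose factor is $\Delta_\emptyset f_y=f_y$ by Definition \ref{DeltaJg}, while every other factor satisfies $|J|+r\geq2$. Hence each product splits as $f_y^{m_{\emptyset,1}}\prod_{|J|+r\geq2}(\Delta_J f_{y^r})^{m_{J,r}}$. Dividing through by $f_y^{2n-1}$ to pass from $f_y^{2n-1}y_I$ to $y_I$ leaves the denominator $f_y^{2n-1-m_{\emptyset,1}}$. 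The one genuine computation is to check that this exponent is the claimed $n+\sum_{|J|+r\geq2}m_{J,r}$: since the only $|J|+r<2$ contribution to $\sum_{J,r}m_{J,r}=n-1$ is $m_{\emptyset,1}$, we have $m_{\emptyset,1}=n-1-\sum_{|J|+r\geq2}m_{J,r}$, and substituting gives $2n-1-m_{\emptyset,1}=n+\sum_{|J|+r\geq2}m_{J,r}$, as required.

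Finally I would reindex. By Lemma \ref{AItildeAI} the map $\tilde\alpha\mapsto\alpha$ forgetting $m_{\emptyset,1}$ is a bijection $\tilde{A}_I\to A_I$, and after the division the summand depends on $\tilde\alpha$ only through the retained multiplicities $\{m_{J,r}\}_{|J|+r\geq2}$, that is, through $\alpha$. Setting $c_\alpha:=\tilde{c}_{\tilde\alpha}$ for the unique preimage $\tilde\alpha$ of each $\alpha$ then rewrites the sum over $\tilde{A}_I$ as the asserted sum over $A_I$, which completes the proof.

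There is essentially no deep step here, since the statement is a bookkeeping reformulation of Proposition \ref{formnoden} combined with the identification of sets in Lemma \ref{AItildeAI}. The only place to be careful, and the natural source of an error, is the identification of the factor $f_y$ with the vector $\sqbinom{\emptyset}{1}$ and the exponent arithmetic above: an off-by-one in the power of $f_y$ is exactly what one would expect to slip in, and it is avoided only by using $\sum_{J,r}m_{J,r}=n-1$ correctly. Once that relation is applied the conclusion is immediate.
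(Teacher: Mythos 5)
Your proof is correct and takes essentially the same route as the paper's: both pass from Proposition \ref{formnoden} to a sum indexed by $\tilde{A}_{I}$, transport it to $A_{I}$ via the bijection of Lemma \ref{AItildeAI}, and absorb the factor $(\Delta_{\emptyset}f_{y})^{m_{\emptyset,1}}=f_{y}^{m_{\emptyset,1}}$ into the denominator using $m_{\emptyset,1}=n-1-\sum_{|J|+r\geq2}m_{J,r}$. Your exponent computation $2n-1-m_{\emptyset,1}=n+\sum_{|J|+r\geq2}m_{J,r}$ is exactly the step the paper performs when it ``recalls the value of $m_{\emptyset,1}$'' from the proof of that lemma.
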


\begin{proof}
Using Definition \ref{AIdef}, we deduce from Propositions \ref{formnoden} and \ref{genrec} that $y_{I}$ can be presented as the sum $\sum_{\tilde{\alpha}=\{m_{J,r}\}_{J,r}\in\tilde{A}_{I}}c_{\tilde{\alpha}}\prod_{J,r}(\Delta_{J}f_{y^{r}})^{m_{J,r}}\big/f_{y}^{2n-1}$. We replace, using Lemma \ref{AItildeAI}, each $\tilde{\alpha}\in\tilde{A}_{I}$ by the corresponding $\alpha \in A_{I}$, denote $c_{\tilde{\alpha}}$ also by $c_{\alpha}$, and merge $(\Delta_{\emptyset}f_{y^{1}})^{m_{\emptyset,1}}/f_{y}^{2n-1}$ into $f_{y}^{m_{\emptyset,1}-2n+1}$. Recalling the value of $m_{\emptyset,1}$ that we attached to any $\alpha \in A_{I}$ in the proof of that lemma, this produces the desired expression. This proves the corollary.
\end{proof}
We remark again that the coefficient $c_{\alpha}$ from Corollary \ref{formwden} must remain the same when $\alpha$ is replaced by its image under a permutation of the indices that leaves the multiset $I$ invariant, since the formula for $y_{I}$ must be invariant under such permutations. This does not follow from the proof of that corollary, and will be established only after we determine the formula for these coefficients in Theorem \ref{coeffval} below.

\begin{rmk}
For every $\alpha \in A_{I}$, the sum $h=\sum_{|J|+r\geq2}m_{J,r}$ is again the number of vectors in the partition, but the partition now is of $\sqbinom{I}{h-1}$ (into $h$ vectors). As with $m_{J,r}=0$ for every $J$ and $r$ with $|J|+r\geq2$ the equalities defining an element of $A_{I}$ in Definition \ref{AIdef} are not satisfied, we deduce that $h$ cannot vanish, and since $2h=\sum_{|J|+r\geq2}2m_{J,r}\leq\sum_{|J|+r\geq2}(|J|+r)m_{J,r}$, and we saw in the proof of Lemma \ref{AItildeAI} that the latter sum equals $h+n-1$, we deduce that $1 \leq h \leq n-1$. It will thus be convenient to write $A_{I}=\bigcup_{h=1}^{n-1}A_{I,h}$ (a disjoint union), where $A_{I,h}$ is obtained from $A_{I}$ by adding the additional constraint that $\sum_{|J|+r\geq2}m_{J,r}=h$, which is equivalent to $\sum_{|J|+r\geq2}(r-1)m_{J,r}=h-1$. Then in Corollary \ref{formwden} the summands that arise from $A_{I,h}$ involve the denominator $f_{y}^{n+h}$. Returning to $A_{I}$, we see that $I$ only shows up in the multiset equality, and we define $A$ to be the set of all $\alpha=\{m_{J,r}\}_{\{|J|+r\geq2\}}$ that satisfy the non-negativity assumption $m_{J,r}\geq0$ for every $J$ and $r$, the finiteness of $\{(J,r)|m_{J,r}>0\}$ from Lemma \ref{AItildeAI}, and the equality $\sum_{J,r}(r-1)m_{J,r}=-1$ from Definition \ref{AIdef}. It is clear that $\bigcup_{|I|\geq2}A_{I}$ is a disjoint union that is contained in $A$ (since $I$ is determined as $\sum_{J,r}m_{J,r}J$, which is a finite sum defining a finite multiset), and we claim that this union gives all of $A$. Indeed, for an element of $A$ we get that $\sum_{|J|+r\geq2}m_{J,r}|J|$ equals $\sum_{|J|+r\geq2}(|J|+r-1)m_{J,r}+1$ (by the defining equality from Definition \ref{AIdef}), which is at least $\sum_{|J|+r\geq2}m_{J,r}+1$ by non-negativity and the index restriction $|J|+r\geq2$. But as the latter sum was seen to be $h\geq1$, we find that by setting $I$ to be the multiset sum $\sum_{|J|+r\geq2}m_{J,r}J$ we get $|I|\geq2$, and therefore every element of $A$ lies in a (unique) set $A_{I}$ with $|I|\geq2$. \label{setpart}
\end{rmk}

\section{The Combinatorial Coefficients \label{DetCoeff}}

In order to be able to calculate the coefficients $c_{\alpha}$ from Corollary \ref{formwden}, we shall construct, like in \cite{[Z]}, a recursive relation for them. From the formulae that we have already established, it follows that when $|I|=2$ and $\alpha$ is the only element lying in $A_{2}$, this coefficient is $-1$, and when $|I|=3$ we get again a coefficient of $-1$ for the element of $A_{3}$ with $\sqbinom{I}{0}$, while for the three other elements there it equals $+1$ when the elements of $I$ are distinct (if $I$ is a multiset with multiplicities 2 and 1, then these become two terms with the coefficients $+2$ and $+1$, and when $I$ is a single element having multiplicity 3, these elements of $A_{3}$ all merge to a single one, with the coefficients $+3$). When $I$ is a set of size 4 (meaning a multiset all of whose elements are distinct), we can divide Equation \eqref{yder4} by $f_{y}^{7}$ and read the coefficients off the elements of $A_{4}$ from there.

For establishing the recursive relation, consider a multiset $I$ of size $n$, and in the corresponding set $\tilde{A}_{I}$ from Corollary \ref{formwden} we choose an element $\tilde{\alpha}$, which determines the multiplicities $\{m_{J,r}\}_{J,r}$. Given some index $k$, we define the following objects. First, take a vector $\sqbinom{\tilde{J}}{\tilde{r}}$ satisfying $\tilde{J}+\tilde{r}\geq2$ and $m_{\tilde{J},\tilde{r}}\geq1$, and given $J$ and $r$ we set the multiplicity $m_{+,J,r}^{k,\tilde{J},\tilde{r}}$ to be $m_{J,r}+1$ in case $\sqbinom{J}{r}$ is $\sqbinom{\emptyset}{1}$ or $\sqbinom{\tilde{J}+k}{\tilde{r}}$, to be $m_{J,r}-1$ when $\sqbinom{J}{r}=\sqbinom{\tilde{J}}{\tilde{r}}$, and simply $m_{J,r}$ in any other case. We define $\tilde{\alpha}_{k,+}^{\tilde{J},\tilde{r}}$ to be the set of multiplicities $\{m_{+,J,r}^{k,\tilde{J},\tilde{r}}\}_{J,r}$. Assume now that $\sqbinom{\tilde{J}}{\tilde{r}}$ is with $\tilde{J}\neq\emptyset$ and with $m_{\tilde{J},\tilde{r}}\geq1$, and take $j\in\tilde{J}$, with multiplicity $\tilde{\eta}_{j}\geq1$ in $\tilde{J}$, such that if $r=0$ then $\tilde{J}$ (which cannot be a singleton) is not $\{j,k\}$ with our index $k$. In this situation we define $m_{t,J,r}^{k,\tilde{J},\tilde{r},j}$ to be $m_{J,r}+1$ if $\sqbinom{J}{r}$ equals either $\sqbinom{jk}{0}$ or $\sqbinom{\tilde{J} \setminus j}{\tilde{r}+1}$, $m_{J,r}-1$ in case $\sqbinom{J}{r}=\sqbinom{\tilde{J}}{\tilde{r}}$, and $m_{J,r}$ otherwise. We then set $\tilde{\alpha}_{k,t}^{\tilde{J},\tilde{r},j}:=\{m_{t,J,r}^{k,\tilde{J},\tilde{r},j}\}_{J,r}$. Moreover, the multiplicity $m_{m,J,r}^{k}$ is defined to be $m_{J,r}+1$ when $\sqbinom{J}{r}=\sqbinom{k}{1}$ and $m_{J,r}$ whenever $\sqbinom{J}{r}\neq\sqbinom{k}{1}$, and we denote $\{m_{m,J,r}^{k}\}_{J,r}$ by $\tilde{\alpha}_{k,m}$. Using these definitions, we obtain the following lemma.
\begin{lem}
If $\tilde{\alpha}$ is in $\tilde{A}_{I}$, $1 \leq k \leq N$, and $\tilde{J}$, $\tilde{r}$, and $j$ are such that following the symbols are defined, then $\tilde{\alpha}_{k,+}^{\tilde{J},\tilde{r}}$, $\tilde{\alpha}_{k,t}^{\tilde{J},\tilde{r},j}$, and $\tilde{\alpha}_{k,m}$ are all elements of $\tilde{A}_{I+k}$. Consider now the corresponding expression $c_{\tilde{\alpha}}\prod_{J,r}(\Delta_{J}f_{y^{r}})^{m_{J,r}}$, and let $f_{y}^{2}\frac{\partial}{\partial x_{k}}-(2n-1)\Delta_{k}f_{y}$, with $n:=|I|$, act on it. The result gives the combination with the following coefficients: If $\tilde{J}$ and $\tilde{r}$ are such that $|\tilde{J}|+\tilde{r}\geq2$, then the product corresponding to $\tilde{\alpha}_{k,+}^{\tilde{J},\tilde{r}}$ appears multiplied by $m_{\tilde{J},\tilde{r}}c_{\tilde{\alpha}}$; When $j$ is in $\tilde{J}$ with multiplicity $\tilde{\eta}_{j}$ and $\sqbinom{\tilde{J}}{\tilde{r}}\neq\sqbinom{jk}{0}$, the product arising from $\tilde{\alpha}_{k,t}^{\tilde{J},\tilde{r},j}$ comes with the coefficient $-\tilde{\eta}_{j}m_{\tilde{J},\tilde{r}}c_{\tilde{\alpha}}$; And there is one additional term, which is the product associated with $\tilde{\alpha}_{k,m}$ times the coefficient $-\big(n-1-m_{\emptyset,1}+\sum_{j=1}^{N}(1+\delta_{j,k})m_{jk,0}\big)c_{\tilde{\alpha}}$. \label{contcalpha}
\end{lem}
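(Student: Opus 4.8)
The plan is to prove the two assertions of the lemma in turn: first the membership statements (that $\tilde{\alpha}_{k,+}^{\tilde{J},\tilde{r}}$, $\tilde{\alpha}_{k,t}^{\tilde{J},\tilde{r},j}$, and $\tilde{\alpha}_{k,m}$ all lie in $\tilde{A}_{I+k}$), and then the coefficient computation obtained by letting $f_{y}^{2}\frac{\partial}{\partial x_{k}}-(2n-1)\Delta_{k}f_{y}$ act. For the membership part I would simply verify the defining data from Definition \ref{AIdef} for $\tilde{A}_{I+k}$, namely that the number of parts is $(n+1)-1=n$, that $\sum_{J,r}rm_{J,r}=(n+1)-2=n-1$, that the multiset sum equals $I+k$, and that no part is $\sqbinom{\emptyset}{0}$ or $\sqbinom{i}{0}$. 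Each construction changes exactly three multiplicities (one, for $\tilde{\alpha}_{k,m}$) by $\pm1$, so these are short checks: for $\tilde{\alpha}_{k,+}^{\tilde{J},\tilde{r}}$ the part count changes by $1+1-1=1$, the weighted sum by $1+\tilde{r}-\tilde{r}=1$, and the multiset sum by $\emptyset+(\tilde{J}+k)-\tilde{J}=k$, with analogous checks for the other two. The only delicate point is the forbidden-part condition: the newly created parts $\sqbinom{\tilde{J}+k}{\tilde{r}}$ and $\sqbinom{\{j,k\}}{0}$ are never empty nor singletons, using that $|\tilde{J}|+\tilde{r}\geq2$ forces $|\tilde{J}|\geq2$ when $\tilde{r}=0$ and that $\{j,k\}$ has size $2$. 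This is precisely why the $t$-construction must exclude the degenerate case $\tilde{r}=0$, $\tilde{J}=\{j,k\}$.

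For the coefficient computation I would expand $f_{y}^{2}\frac{\partial}{\partial x_{k}}$ on the monomial $P=\prod_{J,r}(\Delta_{J}f_{y^{r}})^{m_{J,r}}$ by Leibniz' rule, pushing the factor $f_{y}^{2}$ onto the single differentiated factor and invoking Lemma \ref{derDelta} with $g=f_{y^{r}}$ (so $g_{y}=f_{y^{r+1}}$). This produces, for each vector $\sqbinom{J}{r}$ with $m_{J,r}\geq1$ and with Leibniz multiplicity $m_{J,r}$, the three terms $f_{y}\Delta_{J+k}f_{y^{r}}$, $|J|\Delta_{k}f_{y}\cdot\Delta_{J}f_{y^{r}}$, and $-\sum_{j}\eta_{j}\Delta_{jk}f\cdot\Delta_{J\setminus j}f_{y^{r+1}}$, each multiplying $P/\Delta_{J}f_{y^{r}}$. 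I would then match each term with a modified element. Since $f_{y}=\Delta_{\emptyset}f_{y^{1}}$ is the vector $\sqbinom{\emptyset}{1}$, the first term replaces $\sqbinom{J}{r}$ by $\sqbinom{\emptyset}{1}$ together with $\sqbinom{J+k}{r}$; when $|J|+r\geq2$ this is exactly $\tilde{\alpha}_{k,+}^{J,r}$ and yields the coefficient $m_{J,r}c_{\tilde{\alpha}}$. The third term replaces $\sqbinom{J}{r}$ by $\sqbinom{\{j,k\}}{0}$ together with $\sqbinom{J\setminus j}{r+1}$, which in the non-degenerate case $\sqbinom{J}{r}\neq\sqbinom{jk}{0}$ is $\tilde{\alpha}_{k,t}^{J,r,j}$ with coefficient $-\eta_{j}m_{J,r}c_{\tilde{\alpha}}$.

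The heart of the argument, and the step I expect to be the main obstacle, is assembling the single coefficient of the $m$-type product attached to $\tilde{\alpha}_{k,m}$, since four distinct sources collapse onto it and their signs and multiplicities must be tracked exactly. These sources are: the first term applied to the $\sqbinom{\emptyset}{1}$ factors, where $|J|=0$ and no $j$ occurs so Lemma \ref{derDelta} gives just $f_{y}\Delta_{k}f_{y}$, contributing $+m_{\emptyset,1}$; every middle term $|J|\Delta_{k}f_{y}\cdot\Delta_{J}f_{y^{r}}$, which merely multiplies $P$ by $\Delta_{k}f_{y}$ and contributes $+\sum_{J,r}|J|m_{J,r}=+n$ by the multiset-size identity $\sum_{J,r}m_{J,r}J=I$; the external operator $-(2n-1)\Delta_{k}f_{y}$, contributing $-(2n-1)$; and the degenerate third-term contributions, where $\sqbinom{J}{r}=\sqbinom{\{j,k\}}{0}$ forces $\Delta_{jk}f=\Delta_{J}f$ and $\Delta_{J\setminus j}f_{y}=\Delta_{k}f_{y}$, so that term again reduces to $P\cdot\Delta_{k}f_{y}$. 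Here I must weight by $\eta_{j}(\{j,k\})$, which equals $1$ for $j\neq k$ but $2$ when $J=\{k,k\}$, producing $-\sum_{j}(1+\delta_{j,k})m_{jk,0}$. Summing the four gives $m_{\emptyset,1}+n-(2n-1)-\sum_{j}(1+\delta_{j,k})m_{jk,0}=-\bigl(n-1-m_{\emptyset,1}+\sum_{j}(1+\delta_{j,k})m_{jk,0}\bigr)$, exactly the claimed coefficient. I would close by checking that distinct $(+)$- and $(t)$-modifications yield distinct monomials, so that no coefficients merge beyond the degenerate $t$-case already folded into $\tilde{\alpha}_{k,m}$ above, which completes the proof.
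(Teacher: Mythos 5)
Your proposal is correct and takes essentially the same route as the paper: Leibniz' rule combined with Lemma \ref{derDelta}, matching the first and third terms of that lemma to the $+$- and $t$-type elements, and collecting the same four sources of the $m$-type coefficient ($+m_{\emptyset,1}$ from differentiating the $f_{y}$ factors, $+n=\sum_{J,r}|J|m_{J,r}$ from the middle terms, $-(2n-1)$ from the external operator, and $-\sum_{j}(1+\delta_{j,k})m_{jk,0}$ from the degenerate $\Delta_{jk}f$ contributions), with your direct verification of membership in $\tilde{A}_{I+k}$ being the ``easy to verify directly'' alternative the paper itself mentions alongside its appeal to Proposition \ref{formnoden}. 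One cosmetic slip: the degenerate case $\sqbinom{\tilde{J}}{\tilde{r}}=\sqbinom{jk}{0}$ is excluded from the $t$-construction not because of the forbidden-part condition (it creates none) but because it would reproduce $\tilde{\alpha}_{k,m}$ and merge with the $m$-type term---which is exactly how your coefficient computation then treats it, so the argument is unaffected.
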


\begin{proof}
We evaluate the action of $f_{y}^{2}\frac{\partial}{\partial x_{k}}$ on $c_{\tilde{\alpha}}\prod_{J,r}(\Delta_{J}f_{y^{r}})^{m_{J,r}}$ via Leibniz' rule, and the action on the multiplier with $\tilde{J}$ and $\tilde{r}$ yields the coefficient $m_{\tilde{J},\tilde{r}}$ from the exponent and replaces this multiplier by the expression from Lemma \ref{derDelta}. The first term from that lemma yields, wherever $|\tilde{J}|+\tilde{r}\geq2$, the asserted term corresponding to $\tilde{\alpha}_{k,+}^{\tilde{J},\tilde{r}}$, as we have the vector $\sqbinom{\tilde{J}+k}{\tilde{r}}$ and the multiplier $f_{y}$ instead of one instance of $\sqbinom{\tilde{J}}{\tilde{r}}$. Considering now the $j$th summand in the third term from that lemma, in the action on the multiplier with $\tilde{J}\neq\emptyset$, $j\in\tilde{J}$, and $\tilde{r}$ such that $\sqbinom{\tilde{J}}{\tilde{r}}\neq\sqbinom{jk}{0}$, the vector $\sqbinom{\tilde{J}}{\tilde{r}}$ is replaced by $\sqbinom{\tilde{J} \setminus j}{\tilde{r}+1}$, and we have the multiplier $\Delta_{jk}f$, increasing the multiplicity $m_{jk,0}$ by 1.

It remains to consider the second term from Lemma \ref{denfree}, the contribution of the second term from Lemma \ref{derDelta} for every $\tilde{J}$ and $\tilde{r}$, the first term from Lemma \ref{derDelta} from the action on the power of $f_{y}$ associated with $\sqbinom{\tilde{J}}{\tilde{r}}=\sqbinom{\emptyset}{1}$, and the third term arising from $j$ when the action is on the multiplier $\Delta_{jk}f$ and $\sqbinom{\tilde{J}}{\tilde{r}}=\sqbinom{jk}{0}$. Every such expression produces the original expression times $\Delta_{k}f_{y}$, and therefore give a multiple of the product associated with $\tilde{\alpha}_{k,m}$. The coefficients with which this product appears in the terms thus described are $-(2n-1)c_{\tilde{\alpha}}$, the sum over $J$ and $r$ of $|J|m_{J,r}c_{\tilde{\alpha}}$ (which combine to $+nc_{\tilde{\alpha}}$ because $\tilde{\alpha}\in\tilde{A}_{I}$, as Definition \ref{AIdef} implies), $+m_{\emptyset,1}c_{\tilde{\alpha}}$, and the sum over $j$ of $-m_{jk,0}c_{\tilde{\alpha}}$ times the multiplicity of $j$ in the multiset $\{j,k\}$, which is $1+\delta_{j,k}$. This gives the asserted expression, which also establishes, together with Proposition \ref{formnoden}, the statement that all of these sets of multiplicities lie in $\tilde{A}_{I+k}$ (the latter is also easy to verify directly). This proves the lemma.
\end{proof}

Lemma \ref{contcalpha} evaluates the action of $f_{y}^{2}\frac{\partial}{\partial x_{k}}-(2n-1)\Delta_{k}f_{y}$ on a single summand, associated with an element of $\tilde{A}_{I}$. In order to find the total coefficient in front of the summand corresponding to an element of $\tilde{A}_{I+k}$ when $f_{y}^{2}\frac{\partial}{\partial x_{k}}-(2n-1)\Delta_{k}f_{y}$ acts on a combination of many summands from $\tilde{A}_{I}$, we need the dual notation to that used in Lemma \ref{contcalpha}. Take thus an element $\beta=\{\mu_{J,r}\}_{\{|J|+r\geq2\}}$ from $A_{I+k}$ (rather than $\tilde{A}_{I+k}$). Consider first a vector $\sqbinom{\hat{J}}{\hat{r}}$ with $k\in\hat{J}$, $|\hat{J}|+\hat{r}\geq3$, and $\mu_{\hat{J},\hat{r}}\geq1$, and set $\mu_{-,J,r}^{k,\hat{J},\hat{r}}$ to be $\mu_{J,r}-1$ in case $\sqbinom{J}{r}=\sqbinom{\hat{J}}{\hat{r}}$, $\mu_{J,r}+1$ when $\sqbinom{J}{r}=\sqbinom{\hat{J} \setminus k}{\hat{r}}$, and $\mu_{J,r}$ otherwise. The resulting element $\{\mu_{-,J,r}^{k,\hat{J},\hat{r}}\}_{\{|J|+r\geq2\}}$ will be denoted by $\beta_{k,-}^{\hat{J},\hat{r}}$. Now take $1 \leq j \leq N$ such that $\mu_{jk,0}\geq1$ and take $\sqbinom{\hat{J}}{\hat{r}}$ satisfying $\hat{r}\geq1$, $|\hat{J}|+\hat{r}\geq2$, $\sqbinom{\hat{J}}{\hat{r}}\neq\sqbinom{k}{1}$, and again $\mu_{\hat{J},\hat{r}}\geq1$, and we define $\mu_{b,J,r}^{k,\hat{J},\hat{r},j}$ to be $\mu_{J,r}-1$ when $\sqbinom{J}{r}$ equals either $\sqbinom{\hat{J}}{\hat{r}}$ or $\sqbinom{jk}{0}$, $\mu_{J,r}+1$ if $\sqbinom{J}{r}=\sqbinom{\hat{J}+j}{\hat{r}-1}$, and $\mu_{J,r}$ in any other case. We then set $\beta_{k,b}^{\hat{J},\hat{r},j}$ to be the element with multiplicities $\{\mu_{b,J,r}^{k,\hat{J},\hat{r},j}\}_{\{|J|+r\geq2\}}$. Finally, given $\beta$ for which $\mu_{k,1}\geq1$, the multiplicity $\mu_{d,J,r}^{k}$ is defined to be $\mu_{J,r}-1$ when $\sqbinom{J}{r}=\sqbinom{k}{1}$ and $\mu_{J,r}$ in case $\sqbinom{J}{r}\neq\sqbinom{k}{1}$, and the element $\{\mu_{d,J,r}^{k}\}_{\{|J|+r\geq2\}}$ is denoted by $\beta_{k,d}$. Recalling that adding or omitting the tilde corresponds to the map from Lemma \ref{AItildeAI} (in the appropriate direction), we establish the following duality between these constructions.
\begin{lem}
Given a multiset $I$ with $|I|\geq2$, an index $1 \leq k \leq N$, and an element $\beta \in A_{I+k}$, if any of the expressions $\beta_{k,-}^{\hat{J},\hat{r}}$, $\beta_{k,b}^{\hat{J},\hat{r},j}$, or $\beta_{k,d}$ is defined, then it lies in $A_{I}$. Take also an element $\tilde{\alpha}\in\tilde{A}_{I}$, and two vectors $\sqbinom{\tilde{J}}{\tilde{r}}$ and $\sqbinom{\hat{J}}{\hat{r}}$. Under the assumption that $\sqbinom{\hat{J}}{\hat{r}}=\sqbinom{\tilde{J}+k}{\tilde{r}}$ (which is equivalent to $k\in\hat{J}$ and $\sqbinom{\tilde{J}}{\tilde{r}}=\sqbinom{\hat{J} \setminus k}{\hat{r}}$), we have that $\tilde{\alpha}_{k,+}^{\tilde{J},\tilde{r}}$ is defined and equals $\tilde{\beta}$ precisely when $\beta_{k,-}^{\hat{J},\hat{r}}$ is defined and equals $\alpha$. Now assume that $j\in\tilde{J}$ for some $1 \leq j \leq N$ and $\sqbinom{\hat{J}}{\hat{r}}=\sqbinom{\tilde{J} \setminus j}{\tilde{r}+1}$ (or equivalently $\hat{r}\geq1$ and $\sqbinom{\tilde{J}}{\tilde{r}}=\sqbinom{\hat{J}+j}{\hat{r}-1}$), and then $\tilde{\alpha}_{k,t}^{\tilde{J},\tilde{r},j}$ is defined and equals $\tilde{\beta}$ if and only if $\beta_{k,b}^{\hat{J},\hat{r},j}$ is defined and equals $\alpha$. In addition, $\tilde{\beta}$ is of the form $\tilde{\alpha}_{k,m}$ if and only if $\beta_{k,d}$ is defined and equals $\alpha$. \label{dualnot}
\end{lem}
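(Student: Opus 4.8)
The plan is to verify everything directly at the level of multiplicity vectors, treating the three construction types in parallel and exploiting the fact that, under the stated vector relations, the forward operations of Lemma \ref{contcalpha} and the backward operations defined just before the statement are literally mutually inverse.

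\emph{Membership in $A_I$.} First I would establish that each of $\beta_{k,-}^{\hat{J},\hat{r}}$, $\beta_{k,b}^{\hat{J},\hat{r},j}$, and $\beta_{k,d}$ lies in $A_I$. Since $\beta\in A_{I+k}$ satisfies $\sum_{J,r}\mu_{J,r}J=I+k$ and $\sum_{J,r}(r-1)\mu_{J,r}=-1$ by Definition \ref{AIdef}, and each backward operation changes finitely many multiplicities by $\pm1$, it suffices to compute the net effect on these two quantities. For $\beta_{k,-}^{\hat{J},\hat{r}}$ the multiset sum loses $\hat{J}$ and gains $\hat{J}\setminus k$, a net removal of one $k$, so the sum becomes $I$; the second invariant is unchanged because both touched vectors share the value $r=\hat{r}$. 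For $\beta_{k,b}^{\hat{J},\hat{r},j}$ the sum loses $\hat{J}$ and $\{j,k\}$ and gains $\hat{J}+j$, again a net removal of one $k$, while the contribution to $\sum(r-1)m$ is $-(\hat{r}-1)-(0-1)+((\hat{r}-1)-1)=0$. For $\beta_{k,d}$ the sum loses $\{k\}$ and the invariant changes by $-(1-1)=0$. In each case the positivity hypotheses $\mu_{\hat{J},\hat{r}}\geq1$, $\mu_{jk,0}\geq1$, $\mu_{k,1}\geq1$ guarantee non-negativity of the resulting multiplicities, and the index restrictions ($|\hat{J}|+\hat{r}\geq3$ for the first, $|\hat{J}|+\hat{r}\geq2$ with $\hat{r}\geq1$ for the second) ensure that the newly created vectors still satisfy $|J|+r\geq2$; hence all three elements lie in $A_I$.

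\emph{The duality.} For the three equivalences I would observe that, under the indicated vector relations, applying a forward operation from Lemma \ref{contcalpha} and then the dual backward operation restores the original multiplicities, and conversely. For the first pair, the relation $\sqbinom{\hat{J}}{\hat{r}}=\sqbinom{\tilde{J}+k}{\tilde{r}}$ makes the vectors touched by $(\cdot)_{k,+}^{\tilde{J},\tilde{r}}$, namely $\sqbinom{\emptyset}{1}$, $\sqbinom{\tilde{J}+k}{\tilde{r}}=\sqbinom{\hat{J}}{\hat{r}}$ (each $+1$) and $\sqbinom{\tilde{J}}{\tilde{r}}=\sqbinom{\hat{J}\setminus k}{\hat{r}}$ ($-1$), agree—after discarding the $m_{\emptyset,1}$ slot via Lemma \ref{AItildeAI}—with the vectors touched by $(\cdot)_{k,-}^{\hat{J},\hat{r}}$, namely $\sqbinom{\hat{J}}{\hat{r}}$ ($-1$) and $\sqbinom{\hat{J}\setminus k}{\hat{r}}$ ($+1$), with opposite signs. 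Thus $\tilde{\beta}=\tilde{\alpha}_{k,+}^{\tilde{J},\tilde{r}}$ forces $\alpha=\beta_{k,-}^{\hat{J},\hat{r}}$ and conversely. The identical accounting handles the $t/b$ pair under $\sqbinom{\hat{J}}{\hat{r}}=\sqbinom{\tilde{J}\setminus j}{\tilde{r}+1}$ (where the triple $\sqbinom{jk}{0}$, $\sqbinom{\tilde{J}\setminus j}{\tilde{r}+1}$, $\sqbinom{\tilde{J}}{\tilde{r}}$ is reversed) and the $m/d$ pair on $\sqbinom{k}{1}$. In each instance I would also match the ``defined'' clauses, checking that every definedness hypothesis on one side translates through the $\pm1$ offsets into a condition that holds automatically on the other (so that, e.g., $m_{\tilde{J},\tilde{r}}\geq1$ and $\mu_{\hat{J},\hat{r}}\geq1$ become the corresponding non-negativities after the inverse step), and that the singled-out index $j$ corresponds on both sides.

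\emph{Main obstacle.} The delicate part is not the signed bookkeeping but the degenerate collisions, in which two vectors modified by a single operation could coincide and the $\pm1$ changes would interfere. This can only occur in the $t/b$ case, and there I expect to lean precisely on the side conditions built into the definitions. The exclusion ``if $\tilde{r}=0$ then $\tilde{J}\neq\{j,k\}$'' in $\tilde{\alpha}_{k,t}^{\tilde{J},\tilde{r},j}$ is exactly what separates $\sqbinom{jk}{0}$ from $\sqbinom{\tilde{J}}{\tilde{r}}$ (the only way they could agree is $\tilde{r}=0$, $\tilde{J}=\{j,k\}$), and under the vector relation it is dual to the hypotheses $\hat{r}\geq1$ and $\sqbinom{\hat{J}}{\hat{r}}\neq\sqbinom{k}{1}$ in $\beta_{k,b}^{\hat{J},\hat{r},j}$, which likewise prevent $\sqbinom{jk}{0}$ from coinciding with $\sqbinom{\hat{J}}{\hat{r}}$ or with $\sqbinom{\hat{J}+j}{\hat{r}-1}$. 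Verifying that each forbidden collision on one side corresponds to exactly one excluded configuration on the other—so that the forward and backward operations are defined on matching data rather than merely formally opposite—is the step requiring the most care, and it is what makes the two families of operations genuine inverses.
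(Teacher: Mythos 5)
Your proposal is correct and takes essentially the same route as the paper's own proof: a direct slot-by-slot verification on the multiplicity vectors, checking the two defining invariants of Definition \ref{AIdef} for the membership claim, showing the forward and backward operations are mutually inverse on the slots with $|J|+r\geq2$ (with the $m_{\emptyset,1}$ slot recovered through Lemma \ref{AItildeAI} and membership in $\tilde{A}_{I+k}$), and matching the definedness conditions on the two sides. Your explicit analysis of the potential collisions in the $t$/$b$ case, and of how the side conditions ($\tilde{r}=0\Rightarrow\tilde{J}\neq\{j,k\}$ versus $\hat{r}\geq1$ and $\sqbinom{\hat{J}}{\hat{r}}\neq\sqbinom{k}{1}$) rule them out, makes explicit what the paper's case-by-case unfolding of the equalities uses implicitly.
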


\begin{proof}
The verification that the two conditions from Definition \ref{AIdef} are satisfied for $\beta_{k,-}^{\hat{J},\hat{r}}$, $\beta_{k,b}^{\hat{J},\hat{r},j}$, and $\beta_{k,d}$ wherever $\beta \in A_{I+k}$ is easy, establishing the first assertion. Now, from $|\tilde{J}|+\tilde{r}\geq2$ the relation from the second assertion yields $k\in\hat{J}$ and $|\hat{J}|+\hat{r}\geq3$ with our $\sqbinom{\hat{J}}{\hat{r}}$, while if $k\in\hat{J}$ and $|\hat{J}|+\hat{r}\geq3$ then $|\tilde{J}|+\tilde{r}\geq2$ for the value of $\sqbinom{\tilde{J}}{\tilde{r}}$. Moreover, when $\sqbinom{\hat{J}}{\hat{r}}$ and $\sqbinom{\tilde{J}}{\tilde{r}}$ are related in this manner, we interpret the equalities $\tilde{\beta}=\tilde{\alpha}_{k,+}^{\tilde{J},\tilde{r}}$ (i.e., $\mu_{J,r}=m_{+,J,r}^{k,\tilde{J},\tilde{r}}$ for every $J$ and $r$) and $\alpha=\beta_{k,-}^{\hat{J},\hat{r}}$ (namely $m_{J,r}=\mu_{-,J,r}^{k,\hat{J},\hat{r}}$ wherever $|J|+r\geq2$). Both of them are equivalent to the equality $\mu_{J,r}=m_{J,r}$ holding wherever $\sqbinom{J}{r}$ is with $|J|+r\geq2$ and different from both $\sqbinom{\hat{J}}{\hat{r}}$ and $\sqbinom{\tilde{J}}{\tilde{r}}$, as well as to the equivalent equalities $\mu_{\hat{J},\hat{r}}=m_{\hat{J},\hat{r}}+1\geq1$ and $m_{\hat{J},\hat{r}}=\mu_{\hat{J},\hat{r}}-1$ for $\sqbinom{J}{r}=\sqbinom{\hat{J}}{\hat{r}}$ and $\mu_{\tilde{J},\tilde{r}}=m_{\tilde{J},\tilde{r}}-1$, or equivalently $m_{\tilde{J},\tilde{r}}=\mu_{\tilde{J},\tilde{r}}+1\geq1$ (yielding the other admissibility conditions), in case $\sqbinom{J}{r}=\sqbinom{\tilde{J}}{\tilde{r}}$. For the remaining desired equality $\mu_{\emptyset,1}=m_{\emptyset,1}+1$, we note that if there is $\sqbinom{\hat{J}}{\hat{r}}$ with $\mu_{\hat{J},\hat{r}}\geq1$ and $|\hat{J}|+\hat{r}\geq3$ then we must have $\mu_{\emptyset,1}\geq1$ (see the proof of Proposition \ref{genrec}), and that both $\tilde{\beta}$ and $\tilde{\alpha}_{k,+}^{\tilde{J},\tilde{r}}$ are in $\tilde{A}_{I+k}$ when $\tilde{\alpha}\in\tilde{A}_{I}$. This proves the second assertion.

Assuming now that the relation between $\sqbinom{\hat{J}}{\hat{r}}$ and $\sqbinom{\tilde{J}}{\tilde{r}}$ from the third assertion holds for some $j$, and then $\hat{r}\geq1$, $j\in\tilde{J}$, and the conditions $\sqbinom{\tilde{J}}{\tilde{r}}\neq\sqbinom{jk}{0}$ and $\sqbinom{\hat{J}}{\hat{r}}\neq\sqbinom{k}{1}$ are equivalent. In this case we unfold the equalities $\tilde{\beta}=\tilde{\alpha}_{k,t}^{\tilde{J},\tilde{r},j}$ (namely $\mu_{J,r}=m_{t,J,r}^{k,\tilde{J},\tilde{r},j}$ for every $J$ and $r$) and $\alpha=\beta_{k,b}^{\hat{J},\hat{r},j}$ (which means $m_{J,r}=\mu_{b,J,r}^{k,\hat{J},\hat{r},j}$ wherever $|J|+r\geq2$) as follows. Given $\sqbinom{J}{r}$ with $|J|+r\geq2$, both of them mean that $\mu_{J,r}=m_{J,r}$ when $\sqbinom{J}{r}$ does not equal $\sqbinom{\hat{J}}{\hat{r}}$, $\sqbinom{\tilde{J}}{\tilde{r}}$, or $\sqbinom{jk}{0}$; that the equivalent equalities $\mu_{\hat{J},\hat{r}}=m_{\hat{J},\hat{r}}+1\geq1$ and $m_{\hat{J},\hat{r}}=\mu_{\hat{J},\hat{r}}-1$ hold when $\sqbinom{J}{r}=\sqbinom{\hat{J}}{\hat{r}}$; that we have $\mu_{\tilde{J},\tilde{r}}=m_{\tilde{J},\tilde{r}}-1$ i.e., $m_{\tilde{J},\tilde{r}}=\mu_{\tilde{J},\tilde{r}}+1\geq1$, in case $\sqbinom{J}{r}=\sqbinom{\tilde{J}}{\tilde{r}}$; and that $\mu_{jk,0}=m_{jk,0}+1\geq1$, meaning that $m_{jk,0}=\mu_{jk,0}-1$, if $\sqbinom{J}{r}=\sqbinom{jk}{0}$. As all the required admissibility conditions are deduced from the inequalities in the process, and the remaining equality $\mu_{\emptyset,1}=m_{\emptyset,1}$ is now a consequence of the fact that $\tilde{\beta}$ and $\tilde{\alpha}_{t}^{\tilde{J},\tilde{r}}$ are in $\tilde{A}_{I+k}$, the third assertion follows.

Proving the fourth assertion is shorter: The equalities $\tilde{\beta}=\tilde{\alpha}_{k,m}$ and $\alpha=\beta_{k,d}$ both mean that  $\mu_{J,r}=m_{J,r}$ for every vector $\sqbinom{J}{r}$ with $|J|+r\geq2$ that is not $\sqbinom{k}{1}$, and that $\mu_{k,1}=m_{k,1}+1\geq1$ (yielding the admissibility condition) and $m_{k,1}=\mu_{k,1}-1$, and then having both $\tilde{\beta}$ and $\tilde{\alpha}_{k,m}$ in $\tilde{A}_{I+k}$ implies that $\mu_{\emptyset,1}=m_{\emptyset,1}$. This completes the proof of the lemma.
\end{proof}

Recalling the Kronecker $\delta$-symbol, we shall also be using the complementary symbol $\overline{\delta}_{i,j}=1-\delta_{i,j}$, yielding 1 when $i \neq j$ and 0 if $i=j$. We shall also need a truth symbol $\delta_{S}$, which equals 1 when the statement $S$ holds and 0 when it does not. We can now express $y_{I}$ and $y_{I+k}$ via the formula from Corollary \ref{formwden}, and we obtain the following recursive formula for the coefficients $c_{\alpha}$ appearing there, or more precisely $c_{\beta}$ for $\beta \in A_{I+k}$.
\begin{cor}
Take $\beta \in A_{I+k}$ for a multiset $I$ with $|I|\geq2$ and $1 \leq k \leq N$, with multiplicities $\{\mu_{J,r}\}_{|J|+r\geq2}$. Then the coefficient $c_{\beta}$ equals the sum of the expressions \[\textstyle{\sum_{|\hat{J}|+\hat{r}\geq2}\overline{\delta}_{\mu_{\hat{J},\hat{r}},0}\delta_{k\in\hat{J}}\overline{\delta}_{|\hat{J}|+\hat{r},2}(\mu_{\hat{J} \setminus k,\hat{r}}+1)c_{\beta_{k,-}^{\hat{J},\hat{r}}}},\] \[-\textstyle{\sum_{|\hat{J}|+\hat{r}\geq2}\overline{\delta}_{\mu_{\hat{J},\hat{r}},0}\sum_{j=1}^{N}\overline{\delta}_{\mu_{jk,0},0}\overline{\delta}_{\hat{r},0}\overline{\delta}_{\sqbinom{\hat{J}}{\hat{r}},\sqbinom{k}{1}} (\hat{\eta}_{j}+1)(\mu_{\hat{J}+j,\hat{r}-1}+1)c_{\beta_{k,b}^{\hat{J},\hat{r},j}}}\] (where the symbol $\hat{\eta}_{j}$ is the multiplicity with which $j$ appears in the multiset $\hat{J}$), and $-\overline{\delta}_{\mu_{k,1},0}\big(\sum_{|J|+r\geq2}r\mu_{J,r}+\sum_{j=1}^{N}(1+\delta_{j,k})\mu_{jk,0}\big)c_{\beta_{k,d}}$. \label{contcbeta}
\end{cor}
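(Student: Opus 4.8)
The plan is to compute $f_y^{2(n+1)-1}y_{I+k}$ (with $n=|I|$) in two ways and to match the coefficients of the resulting $\Delta$-monomials. On one side, Corollary \ref{formwden} applied to the multiset $I+k$, in the $\tilde{A}_{I+k}$-form before merging the power of $f_y$, presents $f_y^{2(n+1)-1}y_{I+k}$ as $\sum_{\tilde{\beta}\in\tilde{A}_{I+k}}c_{\tilde{\beta}}\prod_{J,r}(\Delta_J f_{y^r})^{\mu_{J,r}}$, so that for $\beta\in A_{I+k}$ the number $c_\beta=c_{\tilde\beta}$ is exactly the coefficient of the monomial indexed by $\beta$. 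On the other side, I start from $f_y^{2n-1}y_I=\sum_{\tilde{\alpha}\in\tilde{A}_I}c_{\tilde{\alpha}}\prod_{J,r}(\Delta_J f_{y^r})^{m_{J,r}}$ and apply the operator $f_y^{2}\frac{\partial}{\partial x_k}-(2n-1)\Delta_k f_y$; since $\Delta_k f_y=f_{ky}f_y-f_{yy}f_k$, Lemma \ref{denfree} shows that the left-hand side is again $f_y^{2(n+1)-1}y_{I+k}$. Equating the two expansions will produce the recursion.

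To read off the coefficient of a fixed monomial I first note that the products $\prod_{J,r}(\Delta_J f_{y^r})^{\mu_{J,r}}$ attached to distinct elements of $\tilde{A}_{I+k}$ are distinct and functionally independent in the building blocks $\Delta_J f_{y^r}$ (separated, e.g., by their leading terms $f_{Jy^r}f_y^{|J|}$), so collecting like terms is unambiguous and coefficients may be compared. By linearity I then apply Lemma \ref{contcalpha} to each summand, obtaining three families of contributions indexed by $\tilde{\alpha}_{k,+}^{\tilde J,\tilde r}$, $\tilde{\alpha}_{k,t}^{\tilde J,\tilde r,j}$, and $\tilde{\alpha}_{k,m}$. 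Fixing $\beta\in A_{I+k}$, Lemma \ref{dualnot} tells me exactly which pairs (summand $\tilde\alpha$, operation) land on $\tilde\beta$: the $+$-contributions come from the $\tilde\alpha$ whose image in $A_I$ is $\beta_{k,-}^{\hat J,\hat r}$ with $\sqbinom{\hat J}{\hat r}=\sqbinom{\tilde J+k}{\tilde r}$, the $t$-contributions from the $\tilde\alpha$ mapping to $\beta_{k,b}^{\hat J,\hat r,j}$ with $\sqbinom{\hat J}{\hat r}=\sqbinom{\tilde J\setminus j}{\tilde r+1}$, and the single $m$-contribution from the $\tilde\alpha$ mapping to $\beta_{k,d}$, so that the associated $c_{\tilde\alpha}$ become $c_{\beta_{k,-}^{\hat J,\hat r}}$, $c_{\beta_{k,b}^{\hat J,\hat r,j}}$, and $c_{\beta_{k,d}}$ respectively.

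It then remains to rewrite the coefficients of Lemma \ref{contcalpha}, which are phrased through the multiplicities $m_{J,r}$ of $\tilde\alpha$, in terms of the multiplicities $\mu_{J,r}$ of $\beta$ via the defining relations of the dual operations. For the $+$/$-$ pairing one has $\sqbinom{\tilde J}{\tilde r}=\sqbinom{\hat J\setminus k}{\hat r}$ and $m_{\tilde J,\tilde r}=\mu_{\hat J\setminus k,\hat r}+1$, giving the factor $(\mu_{\hat J\setminus k,\hat r}+1)$. For the $t$/$b$ pairing $\tilde J=\hat J+j$ and $\tilde r=\hat r-1$, whence $\tilde\eta_j=\hat\eta_j+1$ and $m_{\tilde J,\tilde r}=\mu_{\hat J+j,\hat r-1}+1$, producing $-(\hat\eta_j+1)(\mu_{\hat J+j,\hat r-1}+1)$. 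For the $m$/$d$ pairing the fourth assertion of Lemma \ref{dualnot} gives $m_{\emptyset,1}=\mu_{\emptyset,1}$ and $m_{jk,0}=\mu_{jk,0}$, while $\mu_{k,1}=m_{k,1}+1$; then from the defining equality $\sum_{J,r}rm_{J,r}=n-2$ of $\tilde{A}_I$ in Definition \ref{AIdef} (among the vectors with $|J|+r<2$ only $\sqbinom{\emptyset}{1}$ contributes, with weight $m_{\emptyset,1}$) one obtains $\sum_{|J|+r\geq2}r\mu_{J,r}=n-1-m_{\emptyset,1}$, so the $m$-coefficient $-\big(n-1-m_{\emptyset,1}+\sum_j(1+\delta_{j,k})m_{jk,0}\big)$ becomes precisely $-\big(\sum_{|J|+r\geq2}r\mu_{J,r}+\sum_j(1+\delta_{j,k})\mu_{jk,0}\big)$.

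Because Lemmas \ref{contcalpha} and \ref{dualnot} already carry the analytic and combinatorial weight, the corollary is essentially an assembly, the one genuine computation being the identity $\sum_{|J|+r\geq2}r\mu_{J,r}=n-1-m_{\emptyset,1}$ above. I expect the main care to go into checking that the summation ranges of the three sums in the statement coincide exactly with the vectors $\sqbinom{\hat J}{\hat r}$ (and indices $j$) for which the corresponding dual element is defined, so that each contributing $\tilde\alpha$ is counted once and only defined duals appear; concretely, $|\tilde J|+\tilde r\geq2$ translates into $k\in\hat J$ and $|\hat J|+\hat r\geq3$ (the factors $\overline{\delta}_{\mu_{\hat J,\hat r},0}$, $\delta_{k\in\hat J}$, $\overline{\delta}_{|\hat J|+\hat r,2}$), and the conditions $j\in\tilde J$, $\sqbinom{\tilde J}{\tilde r}\neq\sqbinom{jk}{0}$ translate into $\hat r\geq1$, $\sqbinom{\hat J}{\hat r}\neq\sqbinom{k}{1}$, $\mu_{jk,0}\geq1$ (the factors $\overline{\delta}_{\hat r,0}$, $\overline{\delta}_{\sqbinom{\hat J}{\hat r},\sqbinom{k}{1}}$, $\overline{\delta}_{\mu_{jk,0},0}$). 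Once these ranges are matched termwise with the $\overline{\delta}$-factors, the three coefficients of Lemma \ref{contcalpha}, rewritten through the relations above, are exactly the three sums asserted for $c_\beta$.
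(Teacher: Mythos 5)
Your proposal is correct and follows essentially the same route as the paper: apply $f_{y}^{2}\frac{\partial}{\partial x_{k}}-(2n-1)\Delta_{k}f_{y}$ to the expansion of $f_{y}^{2n-1}y_{I}$ via Lemma \ref{denfree}, enumerate the contributions landing on the product of a fixed $\tilde{\beta}$ using Lemma \ref{contcalpha}, re-index them through the duality of Lemma \ref{dualnot}, and convert the $m$-coefficients to $\mu$-form exactly as in your computation $\sum_{|J|+r\geq2}r\mu_{J,r}=n-1-m_{\emptyset,1}$. The one point where you go beyond the paper is the explicit linear-independence argument (leading terms $f_{Jy^{r}}f_{y}^{|J|}$) justifying coefficient comparison; the paper instead treats the $c_{\beta}$ as the coefficients produced by the inductive gathering process itself, so no independence claim is needed there, but your added justification is sound and makes the coefficients' uniqueness explicit.
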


\begin{proof}
Following the proof of Corollary \ref{formwden}, we write $f_{y}^{2n-1}y_{I}$ and $f_{y}^{2n+1}y_{I+k}$, where $n:=|I|$, with the corresponding multiplicities, and Lemma \ref{denfree} shows that the operator $f_{y}^{2}\tfrac{\partial}{\partial x_{j}}-(2n-1)\Delta_{k}f_{y}$ sends the former to the latter. Take the element $\tilde{\beta}\in\tilde{A}_{I+k}$ that is associated with $\beta$, and consider all the contributions involving the corresponding in that action. Lemma \ref{contcalpha} implies that these contributions come from the elements $\tilde{\alpha}\in\tilde{A}_{I}$ (with corresponding element $\alpha \in A_{I}$ as in Lemma \ref{AItildeAI}) such that $\tilde{\beta}$ equals either $\tilde{\alpha}_{k,+}^{\tilde{J},\tilde{r}}$ for an admissible vector $\sqbinom{\tilde{J}}{\tilde{r}}$, or $\tilde{\alpha}_{k,t}^{\tilde{J},\tilde{r},j}$ for appropriate $\sqbinom{\tilde{J}}{\tilde{r}}$ and $j\in\tilde{J}$, or $\tilde{\alpha}_{k,m}$. Lemma \ref{dualnot} implies that this is equivalent to $\tilde{\beta}$ being $\tilde{\alpha}_{k,+}^{\hat{J} \setminus k,\hat{r}}$ where $\alpha$ is the element $\beta_{k,-}^{\hat{J},\hat{r}}$ of $A_{I}$ with $\hat{J}$ being a multiset containing $k$ and $\hat{r}$ being such that $|\hat{J}|+\hat{r}\geq3$ and $\mu_{\hat{J},\hat{r}}\geq1$, or to $\tilde{\beta}$ being $\tilde{\alpha}_{k,t}^{\hat{J}+j,\hat{r}-1}$ for $\alpha=\beta_{k,b}^{\hat{J},\hat{r},j}$ with $\hat{r}\geq1$, $|\hat{J}|+\hat{r}\geq2$, $\sqbinom{\hat{J}}{\hat{r}}\neq\sqbinom{k}{1}$, $\mu_{\hat{J},\hat{r}}\geq1$, and $\mu_{jk,0}\geq1$, or to $\tilde{\beta}=\tilde{\alpha}_{k,m}$ with $\alpha=\beta_{k,d}$ in case $\mu_{k,1}\geq1$.

The value of $c_{\beta}$ is obtained by gathering all of the corresponding contributions from Lemma \ref{contcalpha}, with the appropriate value of $m_{\tilde{J},\tilde{r}}$, and with the $\delta$ and $\overline{\delta}$ symbols presenting the admissibility conditions. This produces the two sums over $|\hat{J}|+\hat{r}\geq2$, and in the term involving $c_{\beta_{k,d}}$, Lemma \ref{AItildeAI} expresses the parameter $m_{\emptyset,1}$ required for completing $\beta_{k,d} \in A_{I}$ (with $|I|=n$) to $\tilde{\beta}_{k,d}$ as $n-1-\sum_{|J|+r\geq2}m_{J,r}$. Definition \ref{AIdef} thus implies that $n-1-m_{\emptyset,1}$ equals $\sum_{|J|+r\geq2}rm_{J,r}+1$, and recalling that for $\beta_{k,d}$ the parameter $m_{J,r}$ was defined to be $\mu_{J,r}-1$ in case $J=\{k\}$ and $r=1$ and $\mu_{J,r}$ in any other case, the coefficient multiplying $c_{\beta_{k,d}}$ is the asserted one. This proves the corollary.
\end{proof}

In order to establish the formula for the coefficients from Corollary \ref{formwden}, we introduce the notation $J!$ for $\prod_{i=1}^{N}\eta_{i}!$ when $J$ contains each $1 \leq i \leq N$ with multiplicity $\eta_{i}$ (complementing the notation $\binom{J}{K}$ used in Definition \ref{DeltaJg}), and define the following combinatorial numbers.
\begin{defn}
Consider, inside the set $A$ from Remark \ref{setpart}, the element $\alpha$ with multiplicities $\{m_{J,r}\}_{\{|J|+r\geq2\}}$. We then define the number \[C_{\alpha}:=\Bigg(\sum_{|J|+r\geq2}rm_{J,r}\Bigg)!\Bigg(\sum_{|J|+r\geq2}m_{J,r}J\Bigg)!\Bigg/\prod_{|J|+r\geq2}r!^{m_{J,r}}m_{J,r}!J!^{m_{J,r}}.\] If $\alpha$ lies in $A_{I,h}$, where $I$ contains each $1 \leq i \leq N$ with multiplicity $\nu_{i}$, then $C_{\alpha}$ counts the number of solutions to the following question: Assume that one is given balls in $N+1$ colors, with the last color being red, such that there are $h-1$ marked red balls, and for each $1 \leq i \leq N$ we have $\nu_{i}$ marked balls. Then $C_{\alpha}$ counts the number of possibilities of putting the balls in $h$ identical boxes, such that for every $J$ and $r$, there are $m_{J,r}$ boxes that contain $r$ red balls and $\eta_{i}$ balls of the $i$th color, where $\eta_{i}$ is the multiplicity to which $J$ contains $i$. \label{combcoeff}
\end{defn}
As in Fa\`{a} di Bruno's formula, as explained in, e.g., \cite{[J1]}, and as in \cite{[Z]}, we have $m_{J,r}!$ in the denominator of $C_{\alpha}$ from Definition \ref{combcoeff} to represent the fact that the boxes are in the combinatorial question given there are identical. The coefficients from Definition \ref{combcoeff} satisfy the following recursive relation.
\begin{prop}
Assume that $\beta=\{\mu_{J,r}\}_{\{|J|+r\geq2\}}$ is an element of $A_{I+k}$ for a set $I$ of size $n\geq2$ and some index $1 \leq k \leq N$. Then $C_{\beta}$ is the sum of \[\textstyle{\sum_{|\hat{J}|+\hat{r}\geq2}\overline{\delta}_{\mu_{\hat{J},\hat{r}},0}\delta_{k\in\hat{J}}\overline{\delta}_{|\hat{J}|+\hat{r},2}(\mu_{\hat{J} \setminus k,\hat{r}}+1)C_{\beta_{k,-}^{\hat{J},\hat{r}}}},\] \[\textstyle{\sum_{|\hat{J}|+\hat{r}\geq2}\overline{\delta}_{\mu_{\hat{J},\hat{r}},0}\sum_{j=1}^{N}\overline{\delta}_{\mu_{jk,0},0}\overline{\delta}_{\hat{r},0}\overline{\delta}_{\sqbinom{\hat{J}}{\hat{r}},\sqbinom{k}{1}} (\hat{\eta}_{j}+1)(\mu_{\hat{J}+j,\hat{r}-1}+1)C_{\beta_{k,b}^{\hat{J},\hat{r},j}}}\] (where again $\hat{\eta}_{j}$ denotes the multiplicity of $j$ in $\hat{J}$), and the two expressions $\overline{\delta}_{\mu_{k,1},0}\sum_{j=1}^{N}(1+\delta_{j,k})\mu_{jk,0}C_{\beta_{k,d}}$ and $\overline{\delta}_{\mu_{k,1},0}\sum_{|J|+r\geq2}r\mu_{J,r}C_{\beta_{k,d}}$. \label{indcomb}
\end{prop}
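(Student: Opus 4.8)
The plan is to verify the asserted recursion directly from the explicit factorial expression for $C_\alpha$ in Definition \ref{combcoeff}, in term-by-term parallel with the recursion for the $c_\beta$ established in Corollary \ref{contcbeta}: the two recursions coincide once the signs in Corollary \ref{contcbeta} are dropped and its third summand is split according to its two inner sums. I would first record the two ``global'' quantities attached to $\beta=\{\mu_{J,r}\}\in A_{I+k}$ that govern the numerator of $C_\beta$, namely the total number of red balls $R=\sum_{|J|+r\geq2}r\mu_{J,r}$ (which equals $h-1$ when $\beta\in A_{I+k,h}$) and the colored multiset $\sum_{J,r}\mu_{J,r}J=I+k$, whose factorial is $(I+k)!=(\nu_{k}+1)\,I!$. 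Thus $C_\beta=R!\,(\nu_{k}+1)I!\big/\prod_{J,r}r!^{\mu_{J,r}}\mu_{J,r}!\,J!^{\mu_{J,r}}$, and the whole proof will reduce to comparing this with the $C$-values of the three derived elements.

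Next I would compute, for each of $\beta_{k,-}^{\hat{J},\hat{r}}$, $\beta_{k,b}^{\hat{J},\hat{r},j}$, and $\beta_{k,d}$, the ratio of its $C$-value to $C_\beta$, using only how the multiplicities are modified (as spelled out before Lemma \ref{dualnot}) together with the elementary factorial identities $\hat{J}!/(\hat{J}\setminus k)!=\hat{\eta}_{k}$, $(\hat{J}+j)!/\hat{J}!=\hat{\eta}_{j}+1$, and $\{j,k\}!=1+\delta_{j,k}$. For $\beta_{k,-}^{\hat{J},\hat{r}}$ neither $R$ nor the number of boxes changes and only one $k$-ball is lost, giving $(\mu_{\hat{J}\setminus k,\hat{r}}+1)C_{\beta_{k,-}^{\hat{J},\hat{r}}}=\tfrac{\hat{\eta}_{k}\mu_{\hat{J},\hat{r}}}{\nu_{k}+1}C_\beta$. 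For $\beta_{k,d}$ one loses a red ball, a box, and a $k$-ball, yielding $C_{\beta_{k,d}}=\tfrac{\mu_{k,1}}{R(\nu_{k}+1)}C_\beta$. For $\beta_{k,b}^{\hat{J},\hat{r},j}$ one finds $(\hat{\eta}_{j}+1)(\mu_{\hat{J}+j,\hat{r}-1}+1)C_{\beta_{k,b}^{\hat{J},\hat{r},j}}=\tfrac{\hat{r}\,\mu_{\hat{J},\hat{r}}\,\mu_{jk,0}(1+\delta_{j,k})}{R(\nu_{k}+1)}C_\beta$. In each case every admissibility constraint carried by the $\delta$- and $\overline{\delta}$-symbols is automatically enforced, since a vanishing multiplicity annihilates the corresponding term.

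With these three identities the proof becomes pure bookkeeping of sums, all sharing the prefactor $C_\beta/(\nu_{k}+1)$. Summing the first contribution over the admissible $\sqbinom{\hat{J}}{\hat{r}}$ with $k\in\hat{J}$ and $|\hat{J}|+\hat{r}\geq3$ produces $\sum_{\hat{J}\ni k}\hat{\eta}_{k}\mu_{\hat{J},\hat{r}}$ with the ``size-two'' shapes removed; as $\sum_{\hat{J}\ni k}\hat{\eta}_{k}\mu_{\hat{J},\hat{r}}=\nu_{k}+1$ is just the total number of $k$-colored balls, this gives $\tfrac{C_\beta}{\nu_{k}+1}\big[(\nu_{k}+1)-\mu_{k,1}-S\big]$ with $S:=\sum_{j}(1+\delta_{j,k})\mu_{jk,0}$. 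The $\beta_{k,b}$-sum collapses, via $\sum_{\hat{r}\geq1,\,\sqbinom{\hat{J}}{\hat{r}}\neq\sqbinom{k}{1}}\hat{r}\mu_{\hat{J},\hat{r}}=R-\mu_{k,1}$ and $\sum_{j}(1+\delta_{j,k})\mu_{jk,0}=S$, to $\tfrac{C_\beta}{\nu_{k}+1}\cdot\tfrac{(R-\mu_{k,1})S}{R}$, while the two $\beta_{k,d}$-summands give $\tfrac{C_\beta}{\nu_{k}+1}\cdot\tfrac{\mu_{k,1}S}{R}$ and $\tfrac{C_\beta}{\nu_{k}+1}\cdot\mu_{k,1}$ (the latter because $\sum_{J,r}r\mu_{J,r}=R$ cancels the $R$ in the denominator of $C_{\beta_{k,d}}$). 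Adding the four contributions, the $\mu_{k,1}$-terms cancel and the three $S$-fractions recombine to $S$, so the bracket collapses to exactly $\nu_{k}+1$ and the total is $C_\beta$.

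The main obstacle is precisely this last cancellation's bookkeeping rather than any single deep identity: one must correctly segregate the size-two box shapes $\sqbinom{k}{1}$ and $\sqbinom{jk}{0}$ (the latter including $j=k$), which are excluded from the first sum and instead accounted for by the $\beta_{k,d}$- and $\beta_{k,b}$-terms, and track how each $\overline{\delta}$-restriction trims the corresponding sum. Conceptually this segregation reflects the combinatorial meaning of the recursion: in a configuration counted by $C_\beta$ one classifies the distinguished extra $k$-ball by the box containing it, the first sum being the case of a box with $|\hat{J}|+\hat{r}\geq3$, from which the ball can simply be removed to land in a configuration for $\beta_{k,-}^{\hat{J},\hat{r}}$. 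The size-two cases, however, are cleanest handled through the factorial formula, since there removing the ball alone would produce an inadmissible empty or singleton box and forces the box-deletion and merging operations encoded in $\beta_{k,d}$ and $\beta_{k,b}$; I would therefore present the argument algebraically throughout, invoking the ball-in-boxes picture only to motivate the case split.
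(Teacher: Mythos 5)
Your proposal is correct and follows essentially the same route as the paper's proof: both compute, directly from the factorial formula in Definition \ref{combcoeff}, the same three ratio identities relating $C_{\beta_{k,-}^{\hat{J},\hat{r}}}$, $C_{\beta_{k,b}^{\hat{J},\hat{r},j}}$, and $C_{\beta_{k,d}}$ to $C_{\beta}$, and then conclude using the identities $\sum_{\hat{J},\hat{r}}\hat{\eta}_{k}\mu_{\hat{J},\hat{r}}=\nu_{k}+1$ and $\sum_{\hat{J},\hat{r}}\hat{r}\mu_{\hat{J},\hat{r}}=h-1$ coming from Definition \ref{AIdef}. The only difference is organizational: the paper reassembles all four contributions into the single sum $\frac{C_{\beta}}{\nu_{k}+1}\sum_{|\hat{J}|+\hat{r}\geq2}\hat{\eta}_{k}\mu_{\hat{J},\hat{r}}$, whereas you evaluate the first sum as $(\nu_{k}+1)-\mu_{k,1}-S$ and cancel the $\mu_{k,1}$- and $S$-terms against the remaining three contributions explicitly.
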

As in \cite{[Z]}, the two multiples of $C_{\beta_{k,d}}$ will play different roles in the proofs of Proposition \ref{indcomb}, both the one immediately following and the combinatorial one given after Theorem \ref{coeffval} below.

\begin{proof}
By setting $h:=\sum_{|J|+r\geq2}\mu_{J,r}$, we find that $\beta \in A_{I+k,h}$, and all the asserted terms involve, by Lemma \ref{dualnot}, coefficients $C_{\alpha}$ for elements $\alpha \in A_{I}$. Assume first that $\hat{J}$ is a multiset containing $k$ and that $\hat{r}$ is a number for which we have $|\hat{J}|+\hat{r}\geq3$ and $\mu_{\hat{J},\hat{r}}\geq1$, meaning that the $\overline{\delta}$-symbols do not vanish for the corresponding summand in the first asserted sum. The fact that in the definition of $\beta_{k,-}^{\hat{J},\hat{r}}$ we subtract 1 from $\mu_{\hat{J},\hat{r}}$, add 1 to $\mu_{\hat{J} \setminus k,\hat{r}}$, and change nothing more (compared to $\beta$) means that the sum $\sum_{|J|+r\geq2}\mu_{J,r}$, or equivalently $\sum_{|J|+r\geq2}r\mu_{J,r}$, remains unchanged, and thus $\beta_{k,-}^{\hat{J},\hat{r}} \in A_{I,h}$. The contribution of the corresponding summand is therefore $\mu_{\hat{J} \setminus k,\hat{r}}+1$ times $(h-1)!I!\big/\prod_{|J|+r\geq2}\mu_{-,J,r}^{k,\hat{J},\hat{r}}!(r!J!)^{\mu_{-,J,r}^{k,\hat{J},\hat{r}}}$. Substituting the $\mu_{-,J,r}^{k,\hat{J},\hat{r}}$'s, we get that the total power of each $r!$ in the denominator, as well as that of the $\eta_{i}$'s for $i \neq k$ that form the expression $J!$ (which are the same for $\hat{J}$ and $\hat{J} \setminus k$), are the same as for $C_{\beta}$, and so are the other parts of the denominator that arise from vectors $\sqbinom{J}{r}$ that do not equal $\sqbinom{\hat{J}}{\hat{r}}$ or $\sqbinom{\hat{J} \setminus k}{\hat{r}}$. Combining the remaining parts of the denominator, in which $\hat{\eta}_{k}$ is the multiplicity of $k$ in $\hat{J}$ (and thus the multiplicity in $\hat{J} \setminus k$ is $\hat{\eta}_{k}-1$), with the external multiplier yields \[\frac{\mu_{\hat{J} \setminus k,\hat{r}}+1}{(\hat{\eta}_{k}\!-\!1\!)!^{\mu_{\hat{J} \setminus k,\hat{r}}+1}\!(\mu_{\hat{J} \setminus k,\hat{r}}\!+\!1\!)!\hat{\eta}_{k}!^{\mu_{\hat{J},\hat{r}}-1}\!(\mu_{\hat{J},\hat{r}}\!-\!1\!)!}\!=\!\frac{\hat{\eta}_{k}\mu_{\hat{J},\hat{r}}}{(\hat{\eta}_{k}\!-\!1\!)!^{\mu_{\hat{J} \setminus k,\hat{r}}}\mu_{\hat{J} \setminus k,\hat{r}}!\hat{\eta}_{k}!^{\mu_{\hat{J},\hat{r}}}\mu_{\hat{J},\hat{r}}!},\] meaning that for such $\hat{J}$ and $\hat{r}$ the total expression equals $\frac{\hat{\eta}_{k}\mu_{\hat{J},\hat{r}}}{\nu_{k}+1}C_{\beta}$ (the denominator here arising from the numerator in Definition \ref{combcoeff} for $C_{\beta}$ containing $(\nu_{k}+1)!$ instead of our $\nu_{k}!$). Noting that the vanishing of $\overline{\delta}_{\mu_{\hat{J},\hat{r}},0}\delta_{k\in\hat{J}}$ is equivalent to that of the numerator $\hat{\eta}_{k}\mu_{\hat{J},\hat{r}}$, only the restriction $|\hat{J}|+\hat{r}\geq3$ from the multiplier $\overline{\delta}_{|\hat{J}|+\hat{r},2}$ remains meaningful. It follows that these terms contribute $\frac{C_{\beta}}{\nu_{k}+1}\sum_{|\hat{J}|+\hat{r}\geq3}\hat{\eta}_{k}\mu_{\hat{J},\hat{r}}$ in total.

We now take some $1 \leq j \leq N$ with $\mu_{jk,0}\geq1$, and consider $\hat{r}\geq1$ and a multiset $\hat{J}$, with $\sqbinom{\hat{J}}{\hat{r}}\neq\sqbinom{k}{1}$, for which $|\hat{J}|+\hat{r}\geq2$ and $\mu_{\hat{J},\hat{r}}\geq1$ (so that the corresponding $\overline{\delta}$-symbols do not vanish), and write $\hat{\eta}_{j}$ for the multiplicity of $j$ in $\hat{J}$ (so that its multiplicity in $\hat{J}+j$ is $\hat{\eta}_{j}+1$). We have $h\geq2$ under these conditions, and since for $\beta_{k,b}^{\hat{J},\hat{r},j}$ we subtract 1 from $\mu_{jk,0}$ and $\mu_{\hat{J},\hat{r}}$ and add 1 to $\mu_{\hat{J}+j,\hat{r}-1}$, we obtain that this element lies in $A_{I,h-1}$. The corresponding contribution is $(\hat{\eta}_{j}+1)(\mu_{\hat{J}+j,\hat{r}-1}+1)$ times $(h-2)!I!\big/\prod_{|J|+r\geq2}\mu_{b,J,r}^{k,\hat{J},\hat{r},j}!(r!J!)^{\mu_{b,J,r}^{k,\hat{J},\hat{r},j}}$. The denominators arising from any vector $\sqbinom{J}{r}$ other than $\sqbinom{\hat{J}}{\hat{r}}$, $\sqbinom{\hat{J}+j}{\hat{r}-1}$, or $\sqbinom{jk}{0}$ are the same ones as in $C_{\beta}$, and so are the powers of $\eta_{i}!$ for $i \neq j$ from $J!$ in the first two among the remaining vectors. The external multiplier and the remaining parts of the denominator give \[\frac{(\hat{\eta}_{j}+1)(\mu_{\hat{J}+j,\hat{r}-1}+1)\big/(1+\delta_{j,k})^{\mu_{jk,0}-1}(\mu_{jk,0}-1)!}{[(\hat{\eta}_{j}+1)!(\hat{r}-1)!]^{\mu_{\hat{J}+j,\hat{r}-1}+1}(\mu_{\hat{J}+j,\hat{r}-1}+1)! [\hat{\eta}_{j}!\hat{r}!]^{\mu_{\hat{J},\hat{r}}-1}(\mu_{\hat{J},\hat{r}}-1)!}\] in total. This amounts to $(1+\delta_{j,k})\mu_{jk,0}\hat{r}\mu_{\hat{J},\hat{r}}$ over the parts of the denominator of $C_{\beta}$ that are associated with our three vectors, and recalling the numerator of $C_{\beta}$, the contribution of such an element is $\frac{(1+\delta_{j,k})\mu_{jk,0}\hat{r}\mu_{\hat{J},\hat{r}}}{(\nu_{k}+1)(h-1)}C_{\beta}$. Summing over $\hat{J}$, $\hat{r}$, and $j$, and observing that the vanishing of $\overline{\delta}_{\mu_{\hat{J},\hat{r}},0}\overline{\delta}_{\mu_{jk,0},0}\overline{\delta}_{\hat{r},0}$ is implied by the corresponding vanishing of the numerator (but not the one of the $\overline{\delta}$ associated with the restriction $\sqbinom{\hat{J}}{\hat{r}}\neq\sqbinom{k}{1}$), the sum over these elements equals $\sum_{j=1}^{N}\frac{(1+\delta_{j,k})\mu_{jk,0}C_{\beta}}{(\nu_{k}+1)(h-1)}\sum_{\sqbinom{\hat{J}}{\hat{r}}\neq\sqbinom{k}{1}}\hat{r}\mu_{\hat{J},\hat{r}}$.

It remains to consider, when $\mu_{k,1}\geq1$ (and thus $h\geq2$ once more), the two terms involving $C_{\beta_{k,d}}$, which lies in $A_{I,h-1}$ due to the subtraction of 1 from $\mu_{k,1}$. These terms are $\sum_{j=1}^{N}(1+\delta_{j,k})\mu_{jk,0}$ and $\sum_{|J|+r\geq2}r\mu_{J,r}=h-1$ times $(h-2)!I!\big/\prod_{|J|+r\geq2}\mu_{d,J,r}^{k}!(r!J!)^{\mu_{d,J,r}^{k}}$. All the denominators other than the one arising from $\sqbinom{J}{r}\neq\sqbinom{k}{1}$ coincide with those appearing in $C_{\beta}$, and as the remaining one is the multiplier $\frac{1}{(\mu_{k,1}-1)!}=\frac{\mu_{k,1}}{\mu_{k,1}!}$ for $C_{\beta_{k,d}}$ and $\frac{1}{\mu_{k,1}!}$ in $C_{\beta}$, the same numerator considerations identify our terms as $\sum_{j=1}^{N}\frac{(1+\delta_{j,k})\mu_{jk,0}\mu_{k,1}}{(\nu_{k}+1)(h-1)}C_{\beta}$ and $\frac{\mu_{k,1}C_{\beta}}{\nu_{k}+1}$, with the symbol $\overline{\delta}_{\mu_{k,1},0}$ becoming redundant because of the multiplier $\mu_{k,1}$.

We therefore have to consider $\frac{C_{\beta}}{\nu_{k}+1}\sum_{|\hat{J}|+\hat{r}\geq3}\hat{\eta}_{k}\mu_{\hat{J},\hat{r}}$ plus \[\sum_{j=1}^{N}\frac{(1+\delta_{j,k})\mu_{jk,0}C_{\beta}}{(\nu_{k}+1)(h-1)}\sum_{\sqbinom{\hat{J}}{\hat{r}}\neq\sqbinom{k}{1}}\hat{r}\mu_{\hat{J},\hat{r}}+\sum_{j=1}^{N}\frac{(1+\delta_{j,k})\mu_{jk,0}\mu_{k,1}} {(\nu_{k}+1)(h-1)}C_{\beta}+\frac{\mu_{k,1}C_{\beta}}{\nu_{k}+1},\] involving four terms. Noting that $\mu_{k,1}$ in the third term is just the summand $\hat{r}\mu_{\hat{J},\hat{r}}$ for the missing vector $\sqbinom{\hat{J}}{\hat{r}}=\sqbinom{k}{1}$ in the second term, and together the sum $\sum_{|\hat{J}|+\hat{r}\geq2}\hat{r}\mu_{\hat{J},\hat{r}}$ equals $h-1$ via Definition \ref{AIdef} and Remark \ref{setpart}, the $j$th summand in the sum of those two terms equals just $\frac{(1+\delta_{j,k})\mu_{jk,0}C_{\beta}}{\nu_{k}+1}$. But the multiplier of $\frac{C_{\beta}}{\nu_{k}+1}$ here is $\hat{\eta}_{k}\mu_{\hat{J},\hat{r}}$ for $\sqbinom{\hat{J}}{\hat{r}}=\sqbinom{jk}{0}$, the multiplier $\mu_{k,1}$ in the fourth term is the one associated with $\sqbinom{\hat{J}}{\hat{r}}=\sqbinom{k}{1}$, and for any other vector $\sqbinom{\hat{J}}{\hat{r}}$ with $|\hat{J}|+\hat{r}=2$ we have $\hat{\eta}_{k}=0$ and therefore no contribution. It means that the total expression is $\frac{C_{\beta}}{\nu_{k}+1}\sum_{|\hat{J}|+\hat{r}\geq2}\hat{\eta}_{k}\mu_{\hat{J},\hat{r}}$, and the sum equals $\nu_{k}+1$, thus canceling the denominator, for $\beta \in A_{I+k}$ by Definition \ref{AIdef}. This proves the proposition.
\end{proof}

We can now prove the following explicit formula.
\begin{thm}
For every element $\alpha$ in the set $A_{I}$ from Definition \ref{AIdef}, where $I$ is the multiset containing each $1 \leq i \leq N$ with multiplicity $\nu_{i}$, of size $\sum_{i=1}^{N}\nu_{i}\geq2$, let $C_{\alpha}$ be the constant from Definition \ref{combcoeff}. Then the expression $\prod_{|J|+r\geq2}(\Delta_{J}f_{y^{r}})^{m_{J,r}}\Big/f_{y}^{n+\sum_{|J|+r\geq2}m_{J,r}}$ corresponding to $\alpha$ in the expression for $y_{I}$ in Corollary \ref{formwden} comes multiplied by the coefficient $(-1)^{\sum_{|J|+r\geq2}m_{J,r}}C_{\alpha}$, i.e., we have the formula \[y_{I}=\sum_{\alpha \in A_{I}}\frac{(-1)^{\sum_{|J|+r\geq2}m_{J,r}}\big(\sum_{|J|+r\geq2}rm_{J,r}\big)!I!}{\prod_{|J|+r\geq2}r!^{m_{J,r}}m_{J,r}!J!^{m_{J,r}}}\cdot \frac{\prod_{|J|+r\geq2}(\Delta_{J}f_{y^{r}})^{m_{J,r}}}{f_{y}^{n+\sum_{|J|+r\geq2}m_{J,r}}}.\] \label{coeffval}
\end{thm}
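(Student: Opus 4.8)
The plan is to prove the single equality $c_{\alpha} = (-1)^{h}C_{\alpha}$, where $h=\sum_{|J|+r\geq2}m_{J,r}$ is the number of parts of $\alpha$ (so that $\alpha\in A_{I,h}$ in the notation of Remark \ref{setpart}), by induction on $n=|I|\geq2$. Once this is in hand, substituting the explicit value of $C_{\alpha}$ from Definition \ref{combcoeff}---in which $\big(\sum_{|J|+r\geq2}m_{J,r}J\big)!=I!$ by the multiset equality defining $A_{I}$---into the formula of Corollary \ref{formwden} yields the displayed expression at once. The whole argument rests on the observation that the recursion for $c_{\beta}$ in Corollary \ref{contcbeta} and the recursion for $C_{\beta}$ in Proposition \ref{indcomb} are term-by-term identical once the sign factor $(-1)^{h}$ is inserted, so that the two numbers solve the same recursion with matching initial data.

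For the base case I would check $n=2$ directly: the set $A_{I}$ has a single element $\alpha$ with $m_{I,0}=1$ and all other multiplicities zero, so $h=1$, and $C_{\alpha}=0!\,I!\big/(0!\cdot1!\cdot I!)=1$, whereas $c_{\alpha}=-1$ was recorded at the start of Section \ref{DetCoeff}; indeed $(-1)^{1}C_{\alpha}=-1$, and this holds whether or not the two indices of $I$ coincide. For the inductive step I would assume the formula for every $\alpha\in A_{I}$ with $|I|=n\geq2$, take $\beta\in A_{I+k}$ with $|I+k|=n+1$, and choose any index $k$ that occurs in the multiset $I+k$ (so that $I:=(I+k)\setminus k$ is a valid multiset of size $n$ and the recursion of Corollary \ref{contcbeta} applies with this $k$). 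Writing $h=\sum_{|J|+r\geq2}\mu_{J,r}$, so $\beta\in A_{I+k,h}$, the task is to match the right-hand side of Corollary \ref{contcbeta} against that of Proposition \ref{indcomb}.

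The crucial input, already isolated in the proof of Proposition \ref{indcomb}, is how each of the three dual operations shifts the number of parts: $\beta_{k,-}^{\hat{J},\hat{r}}\in A_{I,h}$ (it trades one part for another), while $\beta_{k,b}^{\hat{J},\hat{r},j}\in A_{I,h-1}$ and $\beta_{k,d}\in A_{I,h-1}$ (each merges two parts into one). Feeding the induction hypothesis into Corollary \ref{contcbeta}, the first sum contributes $c_{\beta_{k,-}^{\hat{J},\hat{r}}}=(-1)^{h}C_{\beta_{k,-}^{\hat{J},\hat{r}}}$ with its $+$ sign; the second sum, carrying an explicit $-$ sign, contributes $-c_{\beta_{k,b}^{\hat{J},\hat{r},j}}=-(-1)^{h-1}C_{\beta_{k,b}^{\hat{J},\hat{r},j}}=(-1)^{h}C_{\beta_{k,b}^{\hat{J},\hat{r},j}}$; and the $\beta_{k,d}$-term, again with a $-$ sign, gives $-(-1)^{h-1}C_{\beta_{k,d}}=(-1)^{h}C_{\beta_{k,d}}$ multiplied by the same scalar $\big(\sum_{|J|+r\geq2}r\mu_{J,r}+\sum_{j}(1+\delta_{j,k})\mu_{jk,0}\big)$ that multiplies $C_{\beta_{k,d}}$ (as the sum of the two $C_{\beta_{k,d}}$-terms) in Proposition \ref{indcomb}. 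Hence every term of Corollary \ref{contcbeta} is precisely $(-1)^{h}$ times the matching term of Proposition \ref{indcomb}, and since $h$ is the number of parts of $\beta$ this gives $c_{\beta}=(-1)^{h}C_{\beta}$, closing the induction.

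The main obstacle I anticipate is not any fresh computation---the genuine labor lies in Proposition \ref{indcomb} and Corollary \ref{contcbeta}, which I am permitted to assume---but the disciplined bookkeeping of signs: one must verify that the intrinsic $\pm$ signs attached to the three families of terms in Corollary \ref{contcbeta} combine with the parity shifts $h\mapsto h$ and $h\mapsto h-1$ of the three operations to yield a single uniform factor $(-1)^{h}$, matching the all-positive recursion of Proposition \ref{indcomb}. As a bonus, since $C_{\alpha}$ is manifestly invariant under permutations of $1,\dots,N$ fixing $I$ (it is defined as a count of ball-placements depending only on the multiplicities), the identity $c_{\alpha}=(-1)^{h}C_{\alpha}$ establishes the permutation-symmetry of the coefficients promised after Corollary \ref{formwden}, and therefore the symmetry of the full expression for $y_{I}$.
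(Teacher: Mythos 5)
Your proposal is correct and follows essentially the same route as the paper's own proof: the same induction on $|I|$ with base case $n=2$, the same reduction of everything to matching the signed recursion of Corollary \ref{contcbeta} against the all-positive recursion of Proposition \ref{indcomb} via the part-count shifts $\beta_{k,-}^{\hat{J},\hat{r}} \in A_{I,h}$ and $\beta_{k,b}^{\hat{J},\hat{r},j}, \beta_{k,d} \in A_{I,h-1}$. The sign bookkeeping you carry out is exactly the content of the paper's inductive step, so nothing is missing.
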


\begin{proof}
Having the formula from Corollary \ref{formwden} at hand, we only need to prove the equality $c_{\alpha}=(-1)^{\sum_{|J|+r\geq2}m_{J,r}}C_{\alpha}$ for every $\alpha \in A_{I}$. We saw that when $|I|=2$, the set $A_{I}$ consists of a unique element, in $A_{I,1}$, in which $m_{J,r}$ equals 1 when $J=I$ and $r=0$ and 0 otherwise, with $C_{\alpha}=1$ in Definition \ref{combcoeff}, and the formula from Equation \eqref{yder2} yields the desired result. We therefore assume, by induction, that the statement holds for every $\alpha \in A_{I}$, take some $1 \leq k \leq N$, and wish to prove the formula for an element $\beta=\{\mu_{J,r}\}_{\{|J|+r\geq2\}}$ of $A_{I+k}$ (this will clearly establish the result by induction for every multiset $I$). Corollary \ref{contcbeta} expresses $c_{\beta}$ as a combination of coefficients $c_{\alpha}$ for $\alpha \in A_{I}$, and note that if $\beta \in A_{I+k,h}$ for some $h$ then if $\beta_{k,-}^{\hat{J},\hat{r}}$ is defined then it is in $A_{I,h}$ and the corresponding contribution from that corollary is with a $+$ sign, but when an element $\beta_{k,b}^{\hat{J},\hat{r},j}$ or $\beta_{k,d}$ is defined, it belongs to $A_{I,h-1}$ and yields a contribution with a $-$ sign. The induction hypothesis allows us to write $c_{\alpha}$ as $(-1)^{h}C_{\alpha}$ or $(-1)^{h-1}C_{\alpha}$ (according to whether our $\alpha$ is in $A_{I,h}$ or in $A_{I,h-1}$), with $C_{\alpha}$ from Definition \ref{combcoeff}, and then $(-1)^{h}c_{\beta}$ becomes the formula from Proposition \ref{indcomb}. As this proposition compares that expression with $C_{\beta}$, we obtain the desired formula. This proves the theorem.
\end{proof}
Note that the formula from Theorem \ref{coeffval} is indeed symmetric, in the sense that if we apply a permutation to the indices $1 \leq i \leq N$ that keeps $I$ invariant, the expression for $y_{I}$ remains invariant under this operation (as it should be).

\smallskip

For examining Theorem \ref{coeffval}, let us consider the formulae that we have for $I$ of size $n=3$ or $n=4$. In the former case the explicit formula is given in Equation \eqref{yder3fin}, and we saw that in this case $A_{I,1}$ consists of a single element in which $m_{I,0}=1$ and the other multiplicities vanishing, while $A_{I,2}$ contains one element with $m_{i,1}=m_{I \setminus i,0}=1$ and the other $m_{J,r}$'s vanishing, for every $i \in I$. Now, Definition \ref{combcoeff} assigns the value 1 to the element of $A_{I,1}$, and one can verify that it associates 1 to an element of $A_{I,2}$ with $i \not\in I \setminus i$, 2 when $i \in I \setminus i$ but $I\neq\{i,i,i\}$, and 3 in case $I=\{i,i,i\}$. As this coincides with the number of summands in Equation \eqref{yder3fin} that contribute 1 to $\Delta_{i}f_{y}\cdot\Delta_{I \setminus i}f$, this equation is indeed in correspondence with Theorem \ref{coeffval}. When $n=4$ and $I$ is a set, with no multiplicities, the set $A_{I}$ was seen to consist of the following elements: $A_{I,1}$ has only the element with $m_{I,0}=1$ (every multiplicity without a written value is meant to vanish); In $A_{I,2}$ there are the 4 elements with $m_{i,1}=m_{I \setminus i,0}=1$ and the 6 elements in which $m_{J,1}=m_{I \setminus J,0}=1$ for a subset $J \subseteq I$ of size 2; And $A_{I,3}$ contains 3 elements having $m_{J_{1},0}=m_{J_{2},0}=m_{\emptyset,2}=1$ for unordered partition of $I$ into two sets of size 2, as well as 6 elements with $m_{J,0}=1$ and $m_{i,1}=1$ for the two elements $i \in I \setminus J$. Now, in Definition \ref{combcoeff}, the $\nu_{i}$'s from $I!$, the $\eta_{i}$'s from $J!$, and the $m_{J,r}$'s are all 0 or 1, so that the coefficient $C_{\alpha}$ is just $(h-1)!/\prod_{J,r}r!^{m_{J,r}}$, and since this gives 2 for the second type of elements in $A_{I,3}$ and 1 for all the rest, Equation \eqref{yder4} is indeed the incarnation of Theorem \ref{coeffval} for this case (one can verify that when some of the elements of a multiset $I$ of size 4 coincide, the identification of elements in Equation \eqref{yder4} and the resulting coefficients in Definition \ref{combcoeff} indeed match one another for such a case as well, as Remark \ref{Hardy} below predicts).

\medskip

The coefficients $C_{\alpha}$ from Definition \ref{combcoeff} have a combinatorial interpretation, meaning that Proposition \ref{indcomb} (and thus also Theorem \ref{coeffval}) should have a combinatorial proof as well. We now present this proof, which parallels the one we already gave, but from the combinatorial viewpoint. Take $\beta \in A_{I+k,h}$, which we again write as $\{\mu_{J,r}\}_{\{|J|+r\geq2\}}$, where the multiplicity of $i$ in $I$ is $\nu_{i}$. Recall that $C_{\beta}$ counts the number of ways to put a collection of marked balls, with $\nu_{i}+\delta_{i,k}$ of the $i$th color and $h-1$ red ones, into $h$ identical boxes, under the restriction that if $J$ is a multiset containing each index $i$ with multiplicity $\eta_{i}$ then the number of boxes containing $\eta_{i}$ balls of the $i$th color and $r$ red ones is $\mu_{J,r}$. We investigate this set of possibilities according to solutions of a similar problem with the elements of $A_{I}$ that show up in Proposition \ref{indcomb}, recalling that we only allow boxes containing at least two balls (of any mixture of colors).

Consider the ball of the $k$th color, with the marking $\nu_{k}+1$. In some possibilities it will lie in a box containing at least 3 balls, i.e., the content of the corresponding box is described by $\binom{\hat{J}}{\hat{r}}$, with $k\in\hat{J}$ and with $|\hat{J}|+\hat{r}\geq3$. Taking the ball $\nu_{k}+1$ out yields a solution to the problem of possibilities for $\beta_{k,-}^{\hat{J},\hat{r}}$. Conversely, for every solution for $\beta_{k,-}^{\hat{J},\hat{r}}$ we can get a possibility for $\beta$ by adding the ball $\nu_{k}+1$ to any of the boxes with ball content $\binom{\hat{J} \setminus k}{\hat{r}}$, and the number of such boxes is $\mu_{-,\hat{J} \setminus k,\hat{r}}^{k,\hat{J},\hat{r}}=\mu_{\hat{J} \setminus k,\hat{r}}+1$. The first term in Proposition \ref{indcomb} therefore counts all the possibilities for $\beta$ where the ball $\nu_{k}+1$ sits with at least two other balls in its box.

The next possibilities that we focus on are those where the ball $\nu_{k}+1$, of the $k$th color, shares its box with a single other ball, which is red. Then $\mu_{k,1}\geq1$ and thus $h\geq2$, and the red ball can have any of the $h-1$ markings. Removing the entire box produces a solution for the question associated with $\beta_{k,d}$ (with the markings of the red balls being the complement of the one in the box we took out), and for any of the $h-1$ possible red markings, and any such solution for $\beta_{k,d}$ with the chosen red marking missing, we can add a box with the chosen missing red ball and our ball $\nu_{k}+1$, and get a possibility for $\beta$. This means that the fourth term in Proposition \ref{indcomb} (where we recall that the sum in this term equals $h-1$) involves precisely those possibilities for $\beta$ in which $\nu_{k}+1$ is one of two balls in a box, and the other ball in that box is red.

It remains to count the possibilities in which the ball $\nu_{k}+1$ has only one box-mate, and this box-mate is not red, but rather of the $j$th color. In this case $\mu_{jk,0}\geq1$, so that $h\geq2$ once again because $\beta \in A_{I+k,h}$ and $n=|I|\geq2$. Taking out the box will not give a solution for an element of $A_{I}$, so that in order to obtain such a solution, we remove our ball $\nu_{k}+1$ and the red ball $h-1$ (and the box), and put the box-mate of $\nu_{k}+1$ in the place of the red ball $h-1$. The box containing the red ball $h-1$ is associated with $\hat{J}$ and $\hat{r}$ such that $|\hat{J}|+\hat{r}\geq2$ (as always) and $\hat{r}\geq1$ (due to the existence of the red ball $h-1$), and as the box-mate of $\nu_{k}+1$ is of the $j$th color, after this operation the box of $h-1$ will now be associated with $\sqbinom{\hat{J}+j}{\hat{r}-1}$. Moreover, the removal of the box itself means that we took out a box corresponding to $\sqbinom{jk}{0}$, so that after this process we get a solution for the problem corresponding to $\beta_{k,b}^{\hat{J},\hat{r},j}$ when $\sqbinom{\hat{J}}{\hat{r}}\neq\sqbinom{k}{1}$, and to $\beta_{k,d}$ in case $\sqbinom{\hat{J}}{\hat{r}}=\sqbinom{k}{1}$. On the other hand, from $\hat{J}$ and $\hat{r}$ with $\sqbinom{\hat{J}}{\hat{r}}\neq\sqbinom{k}{1}$ and a solution to the question for $\beta_{k,b}^{\hat{J},\hat{r},j}$, for producing a possibility for $\beta$ we choose a box corresponding to $\sqbinom{\hat{J}+j}{\hat{r}-1}$ (of which there are $\mu_{b,\hat{J}+j,\hat{r}+1}^{k,\hat{J},\hat{r},j}=\mu_{\hat{J}+j,\hat{r}+1}+1$), and a ball of color $j$ in it (and there are $\hat{\eta}_{j}+1$ of those, when the set is $\hat{J}+j$), and we replace this ball by the red ball $h-1$ and take an extra box to put that ball and our ball $\nu_{k}+1$ in it. Similarly, with solution for the problem for $\beta_{k,d}$ we do the same for $\hat{J}=\{k\}$ and $\hat{r}=1$, so that we take a box with a ball of color $k$ and a ball of color $j$ (there are $\mu_{d,jk,0}^{k}=\mu_{jk,0}$ of those), take a ball of color $j$ in it (of which there are $1+\delta_{j,k}$), and carry out the same process of replacement and an extra box. It means that the $j$th summand in the third term in Proposition \ref{indcomb} counts the possibilities for $\beta$ in which our ball $\nu_{k}+1$ is in a box with only one ball, of the color $j$, and the red ball $h-1$ is in a box with a single box-mate of color $k$, and the $j$th summand in the second term in that proposition accounts for the possibilities for $\beta$ where $\nu_{k}+1$ still has a single box-mate of color $j$, and the red ball $h-1$ sits in any other type of box.

Thus, for understanding the combinatorial meaning of the proof of Proposition \ref{indcomb}, we note that in the possibilities for $\beta$, the ball of color $k$ and marking $\nu_{k}+1$ can lie either in a box associated with $\sqbinom{\hat{J}}{\hat{r}}$ for which $k\in\hat{J}$ and $|\hat{J}|+\hat{r}\geq3$ (assuming that $\mu_{\hat{J},\hat{r}}\geq1$), or in a box of type $\sqbinom{k}{1}$ if $\mu_{k,1}\geq1$, or in a box corresponding to $\sqbinom{jk}{0}$ for some $j$ when $\mu_{jk,0}\geq1$ (this accounts for all the possible pairs $\sqbinom{\hat{J}}{\hat{r}}$ with $|\hat{J}|+\hat{r}\geq2$ and $k\in\hat{J}$). When the third option occurs we have $h\geq2$, and the red ball $h-1$ can either belong to a box associated with $\sqbinom{k}{1}$ (in case $\mu_{k,1}\geq1$ again), or in any other type $\sqbinom{\hat{J}}{\hat{r}}\neq\sqbinom{k}{1}$ of box, provided that $\mu_{\hat{J},\hat{r}}\geq1$, $|\hat{J}|+\hat{r}\geq2$, and $\hat{r}\geq1$ (as the box contains a red ball). As it is clear that these options are mutually exclusive, and we saw that each of these options produces a number of possibilities that is the appropriate multiple of the number of solutions for the question associated with the corresponding $\alpha \in A_{I}$ (this $\alpha$ was $\beta_{k,-}^{\hat{J},\hat{r}}$, $\beta_{k,d}$, $\beta_{k,d}$ once more, and $\beta_{k,b}^{\hat{J},\hat{r},j}$ respectively), this completes the combinatorial proof of Proposition \ref{indcomb}, and consequently of Theorem \ref{coeffval} as well.

\section{The Meaning of the Expressions $\Delta_{J}f_{y^{r}}$ \label{Expansion}}

The expression for $y_{I}$ in Theorem \ref{coeffval} simplifies under the assumption that $f_{i}=0$ for every $1 \leq i \leq N$. Then, in the sum defining $\Delta_{J}g$ in Definition \ref{DeltaJg}, all the summands in which $K$ is not the empty multiset vanish, so that $\Delta_{J}g$ reduces to $g_{J}f_{y}^{|J|}$, and in particular we have $\Delta_{J}f_{y^{r}}=f_{Jy^{r}}f_{y}^{|J|}$ for every $J$ and $r$. Substituting these into Theorem \ref{coeffval}, and cancelling the product over all $J$ and $r$ of the powers $f_{y}^{|J|m_{J,r}}$ with $f_{y}^{n}$ from the numerator (since $\sum_{J,r}|J|m_{J,r}=|I|=n$), yields the formula
\begin{equation}
y_{I}=\sum_{\alpha \in A_{I}}\frac{(-1)^{\sum_{|J|+r\geq2}m_{J,r}}\big(\sum_{|J|+r\geq2}rm_{J,r}\big)!I!}{\prod_{|J|+r\geq2}r!^{m_{J,r}}m_{J,r}!J!^{m_{J,r}}}\cdot\frac{\prod_{|J|+r\geq2}f_{Jy^{r}}^{m_{J,r}}}{f_{y}^{\sum_{|J|+r\geq2}m_{J,r}}}. \label{fi=0}
\end{equation}
As I am not aware of a publication containing the partial derivative analogue of Theorem 2 of \cite{[J3]} or of Equation (7) of \cite{[Wi]} in this generality (Equation (2.9) of \cite{[Y]} is close to such a formula, but under some restrictions on the value of $f_{y}$), it seems that Equation \eqref{fi=0}, in this setting is also new. It will also be a special case of Theorem \ref{claselts} below, which can be proved in a manner similar to that from \cite{[J3]} or from \cite{[Wi]}, with our notation. We will prove it, however, using the adaptation of the argument from \cite{[Z]}.

To do so, let $\{f_{i}(\vec{x}_{0},y_{0})\}_{i=1}^{N}$ be arbitrary yet again, and define the function $\varphi$, also of $N+1$ variables, by setting $\varphi(\vec{x},z):=f(\vec{x},z+\vec{\lambda}\cdot\vec{x})$ for a vector $\vec{x}$ and another variable $z$, where $\vec{\lambda}$ is a vector $(\lambda_{1},\ldots,\lambda_{N})\in\mathbb{R}^{N}$ and $\vec{\lambda}\cdot\vec{x}$ is the standard scalar product $\sum_{i=1}^{N}\lambda_{i}x_{i}$. For $z_{0}=y_{0}-\vec{\lambda}\cdot\vec{x}_{0}$ the derivative of $\varphi$ with respect to $z$ yields $\varphi_{z}(\vec{x}_{0},y_{0}-\vec{\lambda}\cdot\vec{x}_{0})=f_{y}(\vec{x}_{0},y_{0})\neq0$, meaning that $z$ can be given as a function of $\vec{x}$ in a neighborhood of $\vec{x}_{0}$ by the equation $\varphi(\vec{x},z)=0$ (and the initial condition $z(\vec{x}_{0})=z_{0}=y_{0}-\vec{\lambda}\cdot\vec{x}_{0}$). It is now clear that with $z$ thus defined and $y$ given in terms of $f(\vec{x},y)=0$ we have $y(x)=z(x)+\vec{\lambda}\cdot(\vec{x}-\vec{x}_{0})$. Differentiating with respect to $x_{i}$ produces the equality $\varphi_{i}=f_{i}+\lambda_{i}f_{y}$ for every $1 \leq i \leq N$, and therefore if we consider the derivatives of $f$ at $(\vec{x}_{0},y_{0})$ and choose each $\lambda_{i}$ to be $y_{i}(\vec{x}_{0})=-\frac{f_{i}}{f_{y}}$, then we obtain $z_{i}(\vec{x}_{0})=0$ for every $i$. We can now prove the following relation.
\begin{lem}
For this choice of $\vec{\lambda}$ we get $\varphi_{Jz^{r}}=\Delta_{J}f_{y^{r}}/f_{y}^{|J|}$ for every multiset $J$ and integer $r\geq0$. \label{trans}
\end{lem}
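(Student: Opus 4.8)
The plan is to reduce everything to a single operator identity and then a routine binomial expansion. Since $\varphi(\vec{x},z)=f(\vec{x},z+\vec{\lambda}\cdot\vec{x})$ and the entries $\lambda_{i}$ of $\vec{\lambda}$ are constants, differentiating $\varphi$ with respect to $z$ is the same as differentiating $f$ with respect to $y$, while differentiating $\varphi$ with respect to $x_{i}$ at fixed $z$ amounts, by the chain rule (using $\partial y/\partial x_{i}=\lambda_{i}$ at fixed $z$), to applying the operator $\partial_{x_{i}}+\lambda_{i}\partial_{y}$ to $f$; this is exactly the relation $\varphi_{i}=f_{i}+\lambda_{i}f_{y}$ recorded above. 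Because the $\lambda_{i}$ are constants, all these operators $\partial_{x_{i}}+\lambda_{i}\partial_{y}$ commute with one another and with $\partial_{y}$, so for a multiset $J$ with multiplicities $\nu_{i}$ I would write
\[\varphi_{Jz^{r}}=\Big(\prod_{i=1}^{N}(\partial_{x_{i}}+\lambda_{i}\partial_{y})^{\nu_{i}}\Big)\partial_{y}^{r}f,\]
the derivatives being taken at $(\vec{x}_{0},z_{0})$, which corresponds to $(\vec{x}_{0},y_{0})$ on the $f$ side since $z_{0}+\vec{\lambda}\cdot\vec{x}_{0}=y_{0}$.

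Next I would expand this product. Applying the binomial theorem to each factor $(\partial_{x_{i}}+\lambda_{i}\partial_{y})^{\nu_{i}}$ and multiplying out, the choices of how many $\partial_{y}$'s to extract from each index are encoded precisely by a submultiset $K\subseteq J$ with multiplicity $\kappa_{i}$: the product of binomial coefficients is $\binom{J}{K}$, the collected scalars are $\prod_{i\in K}\lambda_{i}$, and the surviving differential operator is $\partial_{(J\setminus K)}\partial_{y}^{|K|}$. Since we push $y$-derivatives to the end, applying this to $\partial_{y}^{r}f$ gives $f_{(J\setminus K)y^{|K|+r}}$, so that
\[\varphi_{Jz^{r}}=\sum_{K\subseteq J}\binom{J}{K}\Big(\prod_{i\in K}\lambda_{i}\Big)f_{(J\setminus K)y^{|K|+r}}.\]

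Finally I would substitute the chosen value $\lambda_{i}=-f_{i}/f_{y}$, turning $\prod_{i\in K}\lambda_{i}$ into $(-1)^{|K|}\prod_{i\in K}f_{i}\big/f_{y}^{|K|}$, and compare term by term with Definition \ref{DeltaJg}: taking $g=f_{y^{r}}$ there gives $g_{(J\setminus K)y^{|K|}}=f_{(J\setminus K)y^{|K|+r}}$, and dividing $\Delta_{J}f_{y^{r}}$ by $f_{y}^{|J|}$ turns the factor $f_{y}^{|J|-|K|}$ into $f_{y}^{-|K|}$. Matching the two sums summand by summand yields $\varphi_{Jz^{r}}=\Delta_{J}f_{y^{r}}\big/f_{y}^{|J|}$ as claimed.

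The computation itself is elementary; the only points that require genuine care are the commutativity justifying the single operator formula (which rests on the $\lambda_{i}$ being constants and on $f$ being sufficiently continuously differentiable, so that mixed partials may be freely reordered) and the correspondence of evaluation points between the $\varphi$ and $f$ pictures. These are the places where I would be most careful, but neither presents a real obstacle beyond the bookkeeping of multi-indices.
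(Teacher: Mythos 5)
Your proof is correct and takes essentially the same route as the paper's: both start from the single-derivative relation $\varphi_{iz^{r}}=f_{iy^{r}}+\lambda_{i}f_{y^{r+1}}$, iterate it via the binomial identity to get $\varphi_{Jz^{r}}=\sum_{K\subseteq J}\binom{J}{K}\prod_{i\in K}\lambda_{i}\,f_{(J\setminus K)y^{r+|K|}}$, and then substitute $\lambda_{i}=-f_{i}/f_{y}$ and compare with Definition \ref{DeltaJg}. The only difference is presentational: you package the iteration as a product of commuting operators $\partial_{x_{i}}+\lambda_{i}\partial_{y}$, whereas the paper phrases the same computation as an induction over the indices of $J$.
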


\begin{proof}
It is clear (e.g., by induction on $r$) that $\varphi_{z^{r}}(\vec{x},z)=f_{y^{r}}(\vec{x},y)$ under the relation $y=z+\vec{\lambda}\cdot(\vec{x}-\vec{x}_{0})$. When applying a derivative with respect to some $x_{i}$, we get $\varphi_{iz^{r}}(\vec{x},z)=f_{iy^{r}}(\vec{x},y)+\lambda_{i}f_{y^{r+1}}(\vec{x},y)$. By doing it repeatedly, with the various indices $1 \leq i \leq N$, according to the respective multiplicities $\eta_{i}$, and using the binomial identity in the part of $\binom{J}{K}$ that corresponds to the index of differentiation, we obtain by induction that $\varphi_{Jz^{r}}$ equals $\sum_{K \subseteq J}(-1)^{|K|}\binom{J}{K}f_{(J \setminus K)y^{r+|K|}}\cdot\prod_{i \in K}\lambda_{i}$. Recalling the value $-\frac{f_{i}}{f_{y}}$ of $\lambda_{i}$, this is indeed obtained from the sum from Definition \ref{DeltaJg}, in which we take $g$ to be $f_{y^{r}}$, after division by $f_{y}^{|J|}$. This proves the lemma.
\end{proof}
Using Lemma \ref{trans}, knowing that the formula from Equation \eqref{fi=0} (in which elementary derivatives of $f$ appear, and not the heavier combinations from Definition \ref{DeltaJg}) is valid when the first derivatives $y_{i}$, $1 \leq i \leq N$ of $y$ vanish at $\vec{x}_{0}$, yields the general formula given in Theorem \ref{coeffval}. To see this, we define $z$ in terms of $\varphi$ as above, with the values of the $\lambda_{i}$'s with which $z_{i}(\vec{x}_{0})=0$ for every $i$, and then Equation \eqref{fi=0} gives the expression for $z_{I}$ using the derivatives of $\varphi$ with respect to the variables $x_{i}$, $1 \leq i \leq N$, and $z$. The fact that the difference between $y$ and $z$ is a linear function of $\vec{x}$ implies that $y_{I}$ is the same as $z_{I}$ when $|I|\geq2$, but we need it in terms of the derivatives of $f$ and not $\varphi$. Lemma \ref{trans} does the required transformation (including replacing every $\varphi_{z}$ in the denominator by $f_{y}$), and as the total denominator will also contain $f_{y}$ raised to the power $\sum_{J,r}|J|m_{J,r}=|I|=n$, the the formula from Theorem \ref{coeffval} follows.

Theorem 17 of \cite{[Z]} reproduces, in the case of an implicit function of one variable, the formula for $y^{(n)}$ from \cite{[J3]} and \cite{[Wi]}, which contains just the products of partial derivatives of $f$, without the vanishing assumption. We now aim to state and prove this formula in our setting. Thus, in analogy to Definition \ref{AIdef}, we let $B_{I}$ be the set of multiplicities $\{s_{H,t}\}_{H,t}$, taken over multisets $H$ and integers $t\geq0$, such that the equality $\sum_{H,t}s_{H,t}H=I$ is satisfied as multisets and we have $\sum_{H,t}(t-1)s_{H,t}=-1$ as numbers, as well as $s_{\emptyset,0}=s_{\emptyset,1}=0$ (adding $s_{i,0}=0$ for every $1 \leq i \leq N$ to elements of $A_{I}$ shows that $A_{I} \subseteq B_{I}$, but the inclusion is typically strict, as $s_{i,0}$ are allowed to be positive in elements of $B_{I}$). The same idea from Remark \ref{setpart} partitions $B_{I}$ as $\bigcup_{g}B_{I,g}$, where the index $g$ for which an element $\{s_{H,t}\}_{H,t} \in B_{I}$ lies in $B_{I,g}$ is $\sum_{H,t}s_{H,t}$. Once again we immediately have $g\geq1$, and note that if $|I|=n$ then $\sum_{|H|+t\geq2}(|H|+t)s_{H,t}$ is bounded from below by $2\sum_{|H|+t\geq2}s_{H,t}=2\big(g-\sum_{i=1}^{N}s_{i,0}\big)$ but equals $n+g-1-\sum_{i=1}^{N}s_{i,0}$, and as $\sum_{i=1}^{N}s_{i,0}\leq\sum_{H,t}|H|s_{H,t}=|I|=n$, we obtain that the union goes over $1 \leq g\leq2n-1$.

We also set $B$ to be the union $\bigcup_{|I|\geq1}B_{I}$ (a disjoint union, since $I$ is determined as the multiset sum $\sum_{H,t}s_{H,t}H$ of an element of $B$), where for a singleton $I=\{i\}$ the set $B_{i}$ is not empty, but rather consisting of the single element in which $s_{i,0}=1$ and the other multiplicities vanish (but $B_{\emptyset}$ is empty). Analogously to Definition \ref{combcoeff}, we set
\begin{equation}
D_{\gamma}:=\Bigg(\sum_{H,t}ts_{H,t}\Bigg)!\Bigg(\sum_{H,t}s_{H,t}H\Bigg)!\Bigg/\prod_{H,t}t!^{s_{H,t}}s_{H,t}!H!^{s_{H,t}} \label{Dgamma}
\end{equation}
for any $\gamma \in B$, with a similar combinatorial meaning. For proving the formula for $y_{I}$ using products of partial derivatives of $f$, one can establish an analogue of Corollary \ref{formwden} stating that $y_{I}=\sum_{\gamma=\{s_{H,t}\}_{H,t} \in B_{I}}\Big[d_{\gamma}\prod_{H,t}f_{Hy^{t}}^{s_{H,t}}\Big/f_{y}^{\sum_{H,t}s_{H,t}}\Big]$, and the main result is that for $\gamma \in B_{I,g}$ the coefficient $d_{\gamma}$ equals $(-1)^{g}D_{\gamma}$ (note that this covers the case with $I=\{i\}$, hence $|I|=1$, where we already know that $y_{i}=-\frac{f_{i}}{f_{y}}$, since the unique element $\gamma$ of $B_{i}=B_{i,1}$ satisfies $D_{\gamma}=1$ via Equation \eqref{Dgamma}, and hence $d_{\gamma}=-1$). We express this result as follows.
\begin{thm}
For every multiset $I$, of size $n\geq1$, we have the equality
\[y_{I}=\sum_{g=1}^{2n-1}\sum_{\gamma=\{s_{H,t}\}_{H,t} \in B_{I,g}}\frac{(-1)^{g}D_{\gamma}}{f_{y}^{g}}\prod_{H,t}f_{Hy^{t}}^{s_{H,t}},\] where the coefficient $D_{\gamma}$ is the one from Equation \eqref{Dgamma}. \label{claselts}
\end{thm}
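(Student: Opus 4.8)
The plan is to deduce Theorem \ref{claselts} directly from Theorem \ref{coeffval} by expanding the building blocks $\Delta_{J}f_{y^{r}}$ into ordinary derivatives and collecting the resulting monomials. The case $|I|=1$ is the base case $y_{i}=-\frac{f_{i}}{f_{y}}$, which matches the unique element of $B_{i,1}$ (with $D_{\gamma}=1$), so from here on I assume $n=|I|\geq2$ and take the formula of Theorem \ref{coeffval}, together with $c_{\alpha}=(-1)^{h}C_{\alpha}$ (where $h=\sum_{J,r}m_{J,r}$ is the number of blocks in the $\alpha$-term), as the starting point.

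First I would establish the structural analogue of Corollary \ref{formwden}. Substituting Definition \ref{DeltaJg} for each factor turns every summand into a combination of products of ordinary derivatives: a block $\Delta_{J}f_{y^{r}}$ contributes, for each $K\subseteq J$, the main derivative $f_{(J\setminus K)y^{r+|K|}}$ (of type $H=J\setminus K$, $t=r+|K|$) together with $|K|$ singleton factors $f_{i}$ for $i\in K$, the sign $(-1)^{|K|}$, the coefficient $\binom{J}{K}$, and the power $f_{y}^{|J|-|K|}$. I would then verify that each monomial $\prod_{H,t}f_{Hy^{t}}^{s_{H,t}}\big/f_{y}^{g}$ lies in $B_{I}$: the multiset of $x$-indices is preserved by the split $J=(J\setminus K)+K$, so $\sum_{H,t}s_{H,t}H=I$; each shed ball turns one box into a box with one more red index while creating one singleton box, so with $g:=\sum_{H,t}s_{H,t}$ the total number of red indices is $\sum_{H,t}ts_{H,t}=(h-1)+(g-h)=g-1$, giving $\sum_{H,t}(t-1)s_{H,t}=-1$; counting powers of $f_{y}$ (numerator $n-(g-h)$ against the denominator $f_{y}^{n+h}$ of Theorem \ref{coeffval}) leaves exactly $f_{y}^{g}$; and neither $f$ (the block with $H=\emptyset$, $t=0$) nor $f_{y}$ (the block with $H=\emptyset$, $t=1$) can occur as a numerator factor, since every $\Delta$-index satisfies $|J|+r\geq2$, so $s_{\emptyset,0}=s_{\emptyset,1}=0$. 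This yields $y_{I}=\sum_{\gamma\in B_{I}}d_{\gamma}\prod_{H,t}f_{Hy^{t}}^{s_{H,t}}\big/f_{y}^{g}$.

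Next I would evaluate $d_{\gamma}$. The signs are immediate: the factor $(-1)^{h}$ and the $(-1)^{|K|}$ from each shed ball combine, since the shed balls total $g-h$, into the single sign $(-1)^{g}$. Thus $d_{\gamma}=(-1)^{g}N_{\gamma}$, where $N_{\gamma}$ is the nonnegative number obtained by summing, over all $\alpha\in A_{I}$ and all shedding patterns producing $\gamma$, the product of $C_{\alpha}$ with the binomial weights $\prod\binom{J}{K}$ and the multiplicities arising when identical factors are merged. The theorem thus reduces to the purely combinatorial identity $N_{\gamma}=D_{\gamma}$, which I would prove in the ball-in-box model of Definition \ref{combcoeff}: a configuration counted by $D_{\gamma}$ places $g-1$ marked red balls and $\nu_{i}$ marked balls of color $i$ into $g$ identical boxes, now allowing boxes holding a single colored ball, and \emph{merging} each such singleton back into a box while consuming one red index inverts the shedding operation and lands in a configuration of the type counted by $C_{\alpha}$. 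The binomials $\binom{J}{K}$ record which marked balls are shed, and the ratio of $(g-1)!$ to $(h-1)!$ records which red balls are created.

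The main obstacle is precisely this identity $N_{\gamma}=D_{\gamma}$: one must check that the binomial weights, the merging multiplicities, and the change in the number of marked red balls fit together so that every $D_{\gamma}$-configuration is produced exactly once, with the correct weight, by shedding from $C_{\alpha}$-configurations. An alternative route, more in line with Section \ref{DetCoeff}, is to bypass the expansion and instead mimic Corollary \ref{contcbeta} and Proposition \ref{indcomb} for the $f$-derivative blocks: differentiating an ordinary derivative is elementary, since $\frac{\partial}{\partial x_{k}}f_{Hy^{t}}=f_{(H+k)y^{t}}-\frac{f_{Hy^{t+1}}f_{k}}{f_{y}}$, so Lemma \ref{denfree} produces a recursion for the $d_{\gamma}$, and one verifies that $(-1)^{g}D_{\gamma}$ satisfies the same recursion by the ball-in-box bookkeeping of Proposition \ref{indcomb}, with the single new feature that a shed ball may now sit alone in its box. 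Either way, the matching of coefficients is the crux, while the algebraic and structural steps are routine bookkeeping.
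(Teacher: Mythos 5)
Your proposal is correct and follows essentially the same route as the paper: your expansion of Theorem \ref{coeffval} via Definition \ref{DeltaJg}, with the verification that each resulting monomial lies in $B_{I,g}$ and carries the sign $(-1)^{g}$, is the paper's Lemma \ref{expDeltaJfyr}, and the identity $N_{\gamma}=D_{\gamma}$ that you single out as the crux is exactly the paper's Proposition \ref{sumZgamma}, which the paper likewise proves by a ball-in-box double count (choosing, for each color $i$, which $s_{i,0}$ of the $g-1$ red balls revert to balls of color $i$ --- the rigorous form of your shedding/merging picture, in which your factor $(g-1)!/(h-1)!$ in fact appears on both sides of the count and cancels). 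The only difference is one of completeness rather than of method: what you defer as ``one must check'' is precisely the content of that proposition, and your sketch of it does complete correctly along the lines you indicate.
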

The expression from Theorem \ref{claselts} has the same required symmetry property that was mentioned after Theorem \ref{coeffval}, namely invariance under permutations that preserve $I$.

Now, Theorem \ref{claselts} can be proved by induction, using an analogue of Proposition \ref{indcomb} that is suitable for the action of differentiation on such expressions (such a direct proof of Theorem 17 of \cite{[Z]}, i.e., in the case of ordinary derivatives, is detailed in \cite{[Wi]}), but we will prove it, for $|I|\geq2$, using Theorem \ref{coeffval} (and we already mentioned that the case where $|I|=1$ is the known, classical formula). Considering an element $\gamma=\{s_{H,t}\}_{H,t} \in B$, let $Z_{\gamma}$ be the set of all collections of numbers $\{q_{H,t,K}\}_{|H|+t\geq2,|K| \leq t}$ satisfying $\sum_{|K| \leq t}q_{H,t,K}=s_{H,t}$ for every $H$ and $t$ with $|H|+t\geq2$, as well as $\sum_{|H|+t\geq2}\sum_{|K| \leq t}\kappa_{i}q_{H,t,K}=s_{i,0}$ for every $1 \leq i \leq N$, where $\kappa_{i}$ is the multiplicity of $i$ in the multiset $K$. We complement the notation $J!$ and $\binom{J}{K}$ for multisets by writing $\binom{t}{K}$ for the multinomial coefficient $\binom{t}{\kappa_{1},\ldots,\kappa_{N}}=\frac{t!}{(t-|K|)!\prod_{i=1}^{N}\kappa_{i}!}=\frac{t!}{K!(t-|K|)!}$ (which vanishes unless $t\geq|K|$), and prove the following lemma.
\begin{lem}
The formula for $y_{I}$ in Theorem \ref{coeffval} expands, in terms of the derivatives $f_{Hy^{t}}$ and the explicit expression $\{s_{H,t}\}_{H,t}$ for elements $\gamma \in B_{I,g}$, to yield the sum over $1 \leq g\leq2n-1$ of the sum over $\gamma \in B_{I,g}$ of  \[\Bigg[\!\frac{(-1)^{g}\big(g\!-\!\sum_{i=1}^{N}s_{i,0}\!-\!1\!\big)!I!}{\prod_{H,t}(t!H!)^{s_{H,t}}}\times\!\sum_{\{q_{H,t,K}\}_{H,t,K} \in Z_{\gamma}}\prod_{|K| \leq t}\!\frac{1}{q_{H,t,K}!}\!\binom{t}{K}^{q_{H,t,K}}\!\Bigg]\!\frac{\prod_{H,t}f_{Hy^{t}}^{s_{H,t}}}{f_{y}^{g}}.\] \label{expDeltaJfyr}
\end{lem}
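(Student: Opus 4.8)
The plan is to start from the explicit expression for $y_{I}$ in Theorem \ref{coeffval}, substitute the definition of each block $\Delta_{J}f_{y^{r}}$ from Definition \ref{DeltaJg}, expand the resulting product of sums, and then regroup the terms according to the monomial $\prod_{H,t}f_{Hy^{t}}^{s_{H,t}}$ that each one produces. Substituting
\[\Delta_{J}f_{y^{r}}=\sum_{K\subseteq J}(-1)^{|K|}\binom{J}{K}f_{(J\setminus K)y^{r+|K|}}\prod_{i\in K}f_{i}\cdot f_{y}^{|J|-|K|}\]
into the summand indexed by $\alpha=\{m_{J,r}\}$ and expanding the $m_{J,r}$th power of this sum by the multinomial theorem amounts, for each pair $(J,r)$, to choosing a subset $K\subseteq J$ for each of the $m_{J,r}$ identical copies. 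Grouping copies by the chosen $K$ introduces multiplicities $p_{J,r,K}$ with $\sum_{K}p_{J,r,K}=m_{J,r}$ and a multinomial coefficient $m_{J,r}!\big/\prod_{K}p_{J,r,K}!$; each chosen $K$ contributes one factor $f_{(J\setminus K)y^{r+|K|}}$ (always with $|J\setminus K|+(r+|K|)=|J|+r\geq2$), a product $\prod_{i\in K}f_{i}$ of factors $f_{Hy^{t}}$ with $|H|+t=1$, a sign $(-1)^{|K|}$, a binomial $\binom{J}{K}$, and a power $f_{y}^{|J|-|K|}$.

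Next I would introduce the change of index $H=J\setminus K$, $t=r+|K|$ (so $J=H+K$, $r=t-|K|$) and set $q_{H,t,K}:=p_{J,r,K}$. This identifies the number of factors $f_{Hy^{t}}$ with $|H|+t\geq2$ as $s_{H,t}=\sum_{|K|\leq t}q_{H,t,K}$ and the number of factors $f_{i}$ as $s_{i,0}=\sum_{H,t,K}\kappa_{i}q_{H,t,K}$, which are precisely the two conditions defining $Z_{\gamma}$. The heart of the argument is to verify that this reindexing is a bijection between pairs consisting of an $\alpha\in A_{I}$ together with a choice $\{p_{J,r,K}\}$, and pairs $(\gamma,\{q_{H,t,K}\})$ with $\gamma\in B_{I}$ and $\{q_{H,t,K}\}\in Z_{\gamma}$. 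In one direction $\alpha$ is recovered through $m_{J,r}=\sum_{K\subseteq J}q_{J\setminus K,\,r+|K|,\,K}$, and I would check that the two identities defining $A_{I}$ follow from those defining $B_{I}$: the multiset sum $\sum_{J,r}m_{J,r}J$ splits as $\sum_{H,t,K}q_{H,t,K}H+\sum_{H,t,K}q_{H,t,K}K$, which by the two constraints of $Z_{\gamma}$ equals $\sum_{|H|+t\geq2}s_{H,t}H+\sum_{i}s_{i,0}\{i\}=I$, while $\sum_{J,r}(r-1)m_{J,r}$ becomes $\sum_{|H|+t\geq2}(t-1)s_{H,t}-\sum_{i}s_{i,0}=\sum_{H,t}(t-1)s_{H,t}=-1$; the forbidden values $m_{\emptyset,0}=m_{i,0}=0$ hold automatically, since the relevant index $(H,t)$ would have $|H|+t<2$ and so carries no $q$. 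The reverse direction and injectivity are immediate from these explicit formulas.

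Finally I would carry out the coefficient bookkeeping. The signs collect to $(-1)^{\sum m_{J,r}+\sum|K|}=(-1)^{h+\sum_{i}s_{i,0}}=(-1)^{g}$, using $\sum_{|J|+r\geq2}m_{J,r}=h$ and $g=h+\sum_{i}s_{i,0}$ from the paragraph defining $B_{I,g}$; the factorial $(\sum rm_{J,r})!=(h-1)!$ becomes $(g-\sum_{i}s_{i,0}-1)!$; and the net power of $f_{y}$ in the denominator is $(n+h)-(n-\sum_{i}s_{i,0})=g$, since $\sum_{J,r}|J|m_{J,r}=|I|=n$ and the expansion contributes $f_{y}^{|J|-|K|}$ per factor. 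For the remaining combinatorial factor, the $m_{J,r}!$ from Theorem \ref{coeffval} cancels against the multinomial coefficient; writing $\binom{J}{K}=J!\big/K!\,(J\setminus K)!$ lets the factors $(H+K)!$ cancel, and recognizing the surviving quotient $t!\big/K!\,(t-|K|)!=\binom{t}{K}$ together with $\prod_{H,t,K}t!^{q_{H,t,K}}=\prod_{H,t}t!^{s_{H,t}}$ and $\prod_{H,t,K}H!^{q_{H,t,K}}=\prod_{H,t}H!^{s_{H,t}}$ produces exactly the asserted prefactor, with the residual $\prod 1/q_{H,t,K}!$ and $\prod\binom{t}{K}^{q_{H,t,K}}$ assembling into the inner sum over $Z_{\gamma}$. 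I expect the main obstacle to be the bijection and constraint-transfer step of the previous paragraph: confirming that the recovered $\alpha$ always lands in $A_{I}$ (and dually that each admissible expansion lands in some $B_{I,g}$) is the structural crux, after which the coefficient computation is a lengthy but purely mechanical cancellation of factorials.
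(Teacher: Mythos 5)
Your proposal is correct and takes essentially the same route as the paper's proof: substitute Definition \ref{DeltaJg} into Theorem \ref{coeffval}, expand each $m_{J,r}$th power by the multinomial theorem, reindex via $H=J\setminus K$, $t=r+|K|$, and regroup the resulting collections according to the induced $\gamma\in B_{I,g}$, with the admissible collections being exactly $Z_{\gamma}$. Your coefficient bookkeeping (the sign $(-1)^{h+\sum_{i}s_{i,0}}=(-1)^{g}$, the factorial $(h-1)!=(g-\sum_{i}s_{i,0}-1)!$, the net power $f_{y}^{g}$, the cancellation of $m_{J,r}!$ against the multinomial coefficients, and the assembly of $\binom{J}{K}$ and $t!$ into $\binom{t}{K}$) coincides with the paper's computation, as does your observation that the constraints defining $A_{I}$ transfer to those defining $B_{I}$ under the reindexing.
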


\begin{proof}
Write each expression $\Delta_{l}f_{y^{r}}$ from Theorem \ref{coeffval} via Definition \ref{DeltaJg}. Then the Multinomial Theorem expresses it $m_{J,r}$th power as the sum over all collections of numbers $\{\tilde{q}_{J,r,K}\}_{K \subseteq J}$ satisfying $\sum_{K \subseteq J}\tilde{q}_{J,r,K}=m_{J,r}$ of the product $\prod_{K \subseteq J}\big[(-1)^{|K|}\binom{J}{K}g_{(J \setminus K)y^{|K|}}\cdot\prod_{i \in K}f_{i} \cdot f_{y}^{|J|-|K|}]^{\tilde{q}_{J,r,K}}$ times the multinomial coefficient $m_{J,r}!\big/\prod_{K \subseteq J}\tilde{q}_{J,r,K}!$. We take the exponents $\{m_{J,r}\}_{\{|J|+r\geq2\}}$ to be an element $\alpha$ from the set $A_{I,h}$ from Remark \ref{setpart}, multiply our expressions over $J$ and $r$, and multiply further by the coefficient $c_{\alpha}=(-1)^{h}C_{\alpha}$ from Theorem \ref{coeffval} and Definition \ref{combcoeff}. The factors $m_{J,r}!$ cancel, and when we expand each $\binom{J}{K}$ as $\prod_{i=1}^{N}\binom{\eta_{i}}{\kappa_{i}}$ (using the respective multiplicities of $i$), then the powers of $\eta_{i}!$ forming $J!$ also cancel for every $1 \leq i \leq N$ (but the products of $\kappa_{i}!$ and of $(\eta_{i}-\kappa_{i})!$ merge to $K!$ and $(J \setminus K)!$ respectively). As the product of the expressions $f_{y}^{|J|}$ cancels with $f_{y}^{n}$ from the denominator, and the power $m_{J,r}$ of $r!$ in the denominator of $C_{\alpha}$ (or $c_{\alpha}$) is $\sum_{K \subseteq J}\tilde{q}_{J,r,K}$, we deduce that in the term corresponding to $\alpha$, the summand arising from the collection $\{\tilde{q}_{J,r,K}\}_{K \subseteq J}$ is
\begin{equation}
\frac{(h-1)!I!\prod_{i=1}^{N}f_{i}^{\sum_{J,r,K}\kappa_{i}\tilde{q}_{J,r,K}}}{(-1)^{h+\sum_{J,r,K}|K|\tilde{q}_{J,r,K}}f_{y}^{h+\sum_{J,r,K}|K|\tilde{q}_{J,r,K}}}\prod_{|J|+r\geq2}\prod_{K \subseteq J}\frac{f_{(J \setminus K)y^{r+|K|}}^{\tilde{q}_{J,r,K}}/\tilde{q}_{J,r,K}!}{\big(r!K!(J \setminus K)!\big)^{\tilde{q}_{J,r,K}}}, \label{powmJrexp}
\end{equation}
where each triple sum $\sum_{J,r,K}$ stands for $\sum_{|J|+r\geq2}\sum_{K \subseteq J}$. Now, Equation \eqref{powmJrexp} does not contain the parameters $m_{J,r}$ of $\alpha$, meaning that when we take the sum over $\alpha \in A_{I,h}$ we obtain the sum of the expression from that equation over all collections of numbers $\{\tilde{q}_{J,r,K}\}_{|J|+r\geq2,K \subseteq J}$ that satisfy the equalities $\sum_{J,r,K}\tilde{q}_{J,r,K}=h$, $\sum_{J,r,K}r\tilde{q}_{J,r,K}=h-1$, and $\sum_{J,r,K}\eta_{i}\tilde{q}_{J,r,K}=\nu_{i}$ for each $1 \leq i \leq N$ (i.e., $\sum_{J,r,K}\tilde{q}_{J,r,K}J=I$).

Now, for each $\{\tilde{q}_{J,r,K}\}_{|J|+r\geq2,K \subseteq J}$, the exponent of $f_{i}$ in Equation \eqref{powmJrexp} is $\sum_{J,r,K}\kappa_{i}\tilde{q}_{J,r,K}$, and we therefore separate our set of collections according to the value $s_{i,0}$ of this sum, for each $1 \leq i \leq N$. This (non-negative) value is bounded by $\sum_{J,r,K}\eta_{i}\tilde{q}_{J,r,K}=\nu_{i}$, and note that the exponent showing up twice in the denominator is just $\sum_{i=1}^{N}s_{i,0}$, which thus lies between 0 and $n$. We recall as well that for the total expression from Theorem \ref{coeffval} we also need to sum over $1 \leq h \leq n-1$. We shall write $H$ for the multiset difference $J \setminus K$ and $t$ for $r+|K|$, so that $|H|+t=|J|+r\geq2$ and $K$ satisfies $|K| \leq t$, and we denote each number $\tilde{q}_{J,r,K}=\tilde{q}_{H+K,t-|K|,K}$ as just $q_{H,t,K}$. As summing over $J$, $r$, and $K$ is the same as summing over $H$, $t$, and $K$ (and the total sum $\sum_{H,t,K}$ means $\sum_{|H|+t\geq2}\sum_{|K| \leq t}$), the restrictions on the numbers in the collection in this notation become $\sum_{H,t,K}q_{H,t,K}=h$, $\sum_{H,t,K}\rho_{i}\tilde{q}_{H,t,K}=\nu_{i}-s_{i,0}$ (where $\rho_{i}$ is the multiplicity of $i$ in $H$), and $\sum_{H,t,K}tq_{H,t,K}=h+\sum_{i=1}^{N}s_{i,0}-1$. We also replace $h$ by $g:=h+\sum_{i=1}^{N}s_{i,0}$, satisfying $1 \leq g\leq2n-1$, multiply and divide by $t!$, and gather the corresponding factorials into the binomial coefficient $\binom{t}{K}$, and then Equation \eqref{powmJrexp}, written in terms of a collection $\{q_{H,t,K}\}_{H,t,K}$ satisfying the restrictions given by the above equalities, becomes
\[\frac{\big(g\!-\!\sum_{i=1}^{N}s_{i,0}\!-\!1\!\big)!I!\prod_{i=1}^{N}f_{i}^{s_{i,0}}}{(-1)^{g}f_{y}^{g}}\!\!\!\prod_{|H|+t\geq2}\!\frac{f_{Hy^{t}}^{\sum_{|K| \leq t}q_{H,t,K}}}{\big(t!H!\big)^{\sum_{|K| \leq t}q_{H,t,K}}}\!\prod_{|K| \leq t}\!\frac{1}{q_{H,t,K}!}\!\binom{t}{K}^{q_{H,t,K}}\!.\] We thus set $s_{H,t}:=\sum_{|K| \leq t}q_{H,t,K}$ for every $H$ and $t$ with $|H|+t\geq2$, complete it with the values for $s_{i,0}$, $1 \leq i \leq N$ from before, and with $s_{\emptyset,0}=s_{\emptyset,1}=0$ we obtain that $\{s_{H,t}\}_{H,t}$ lies in $B_{I,g}$. Noting that $t!H!=1$ where $H$ is a singleton and $t=0$, and that the collections contributing to the expression associated with an element $\gamma \in B_{I,g}$ are precisely those lying in $Z_{\gamma}$, this proves the lemma.
\end{proof}

Our proof of Theorem \ref{claselts} uses the following result.
\begin{prop}
Assume that for every multiset $H$ and integer $t\geq0$ with $|H|+t\geq2$ we have a non-negative integer $s_{H,t}$ such that $s_{H,t}=0$ for all but finitely many pairs $(H,t)$. Define $g$ by the equality $\sum_{|H|+t\geq2}ts_{H,t}=g-1$, and assume that for every $1 \leq i \leq N$ we have an integer $s_{i,0}\geq0$ such that $\sum_{i=1}^{N}s_{i,0} \leq g-1$. Then, if $\gamma$ denotes the full set $\{s_{H,t}\}_{H,t}$ and $Z_{\gamma}$ is  as above, then the sum $\sum_{\{q_{H,t,K}\}_{H,t,K} \in Z_{\gamma}}\prod_{|H|+t\geq2}s_{H,t}!\prod_{|K| \leq t}\binom{t}{K}^{q_{H,t,K}}\big/q_{H,t,K}!$ gives the multinomial coefficient $\binom{g-1}{s_{1,0},\ldots,s_{N,0}}$. \label{sumZgamma}
\end{prop}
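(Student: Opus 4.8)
The plan is to prove the identity by a generating-function argument that decouples the two families of constraints defining $Z_\gamma$. The constraint $\sum_{|K|\leq t} q_{H,t,K}=s_{H,t}$ is local to each pair $(H,t)$, whereas the constraint $\sum_{H,t,K}\kappa_i q_{H,t,K}=s_{i,0}$ couples all the summation indices together through the multiplicities $\kappa_i$ of $i$ in $K$. First I would introduce formal variables $z_1,\ldots,z_N$ and, writing $z^K:=\prod_{i=1}^N z_i^{\kappa_i}$, attach the monomial $z^K$ to each factor $\binom{t}{K}$. Then I would consider the enlarged sum $\tilde S$ over all collections $\{q_{H,t,K}\}$ obeying only the local constraints $\sum_{|K|\leq t}q_{H,t,K}=s_{H,t}$, with weight $\prod_{|H|+t\geq 2}s_{H,t}!\prod_{|K|\leq t}(\binom{t}{K}z^K)^{q_{H,t,K}}/q_{H,t,K}!$. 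Extracting the coefficient of $\prod_{i=1}^N z_i^{s_{i,0}}$ from $\tilde S$ reimposes exactly the coupled constraints and strips off all the $z$'s, so the asserted sum equals $[\prod_i z_i^{s_{i,0}}]\,\tilde S$.

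Since the surviving constraints are now independent across the pairs $(H,t)$, the sum $\tilde S$ factors as a product over the $(H,t)$ with $|H|+t\geq 2$. I would evaluate each factor by the Multinomial Theorem: for fixed $(H,t)$ one has $\sum_{\sum_K q_{H,t,K}=s_{H,t}}s_{H,t}!\prod_{|K|\leq t}(\binom{t}{K}z^K)^{q_{H,t,K}}/q_{H,t,K}! = \big(\sum_{|K|\leq t}\binom{t}{K}z^K\big)^{s_{H,t}}$. The crucial step is to recognize the inner sum: since $\binom{t}{K}=t!/K!(t-|K|)!$, the quantity $\sum_{|K|\leq t}\binom{t}{K}z^K$ is precisely the multinomial expansion of $(1+z_1+\cdots+z_N)^t$, where the term $1$ absorbs the ``slack'' $t-|K|$ recorded in the denominator of $\binom{t}{K}$.

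Multiplying these factors, $\tilde S$ collapses to $(1+\sum_{i=1}^N z_i)^{\sum_{|H|+t\geq 2}t s_{H,t}}=(1+\sum_{i=1}^N z_i)^{g-1}$ by the definition of $g$. It then remains to extract the coefficient of $\prod_i z_i^{s_{i,0}}$ from this single power, which is once more a direct application of the Multinomial Theorem: the coefficient is $(g-1)!/\big(\prod_i s_{i,0}!\big)\big(g-1-\sum_i s_{i,0}\big)!$, i.e.\ the multinomial coefficient $\binom{g-1}{s_{1,0},\ldots,s_{N,0}}$ with the slack slot $g-1-\sum_i s_{i,0}$, which is well defined and non-negative precisely because of the hypothesis $\sum_i s_{i,0}\leq g-1$. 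The step I expect to require the most care is the bookkeeping of the first paragraph---checking that tagging each $\binom{t}{K}$ with $z^K$ and then reading off the coefficient genuinely recovers the weighted sum over $Z_\gamma$---together with the clean identification of $\sum_{|K|\leq t}\binom{t}{K}z^K$ with $(1+\sum_i z_i)^t$; once these are in place, the remainder is a purely formal computation.
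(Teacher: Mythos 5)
Your proposal is correct, and it takes a genuinely different route from the paper. The paper proves Proposition~\ref{sumZgamma} by a double count: it interprets $\binom{g-1}{s_{1,0},\ldots,s_{N,0}}$ as the number of ways to extract $N$ disjoint sets of sizes $s_{i,0}$ from $g-1$ balls that arrive packed, for each pair $(H,t)$, in $s_{H,t}$ identical boxes of $t$ balls each, and then recounts these choices according to how many balls of each color are drawn from each box, which produces exactly the sum over $Z_{\gamma}$. You instead give a formal generating-function computation: tagging each $\binom{t}{K}$ with $z^{K}$ turns the coupled constraints $\sum_{H,t,K}\kappa_{i}q_{H,t,K}=s_{i,0}$ into a coefficient extraction; the remaining local constraints $\sum_{|K|\leq t}q_{H,t,K}=s_{H,t}$ let the enlarged sum factor over the pairs $(H,t)$; the Multinomial Theorem identifies each factor with $\big(\sum_{|K|\leq t}\binom{t}{K}z^{K}\big)^{s_{H,t}}=(1+z_{1}+\cdots+z_{N})^{ts_{H,t}}$; and the product collapses to $(1+\sum_{i}z_{i})^{g-1}$, from which the asserted coefficient is read off. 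All steps check out, including the two points you flag: the tagging bookkeeping is sound because the total $z_{i}$-degree of a term is precisely $\sum_{H,t,K}\kappa_{i}q_{H,t,K}$, and the identification of $\sum_{|K|\leq t}\binom{t}{K}z^{K}$ with $(1+\sum_{i}z_{i})^{t}$ is exactly the multinomial expansion with the slack $t-|K|$ assigned to the summand $1$ (for fixed $t$ there are only finitely many such $K$, so everything is a polynomial identity). It is worth noting that the paper introduces its proof by saying that an algebraic proof ``seems hard to establish''; your argument shows that a clean algebraic one does exist. In substance, your per-$(H,t)$ factor $(1+\sum_{i}z_{i})^{t}$ is the algebraic shadow of the paper's per-box choice of which of the $t$ balls join which set, so the two proofs are morally parallel; what yours buys is brevity and mechanical verifiability (it also makes transparent that the hypothesis $\sum_{i}s_{i,0}\leq g-1$ is needed only for the statement to be nontrivial, since otherwise $Z_{\gamma}$ is empty and both sides vanish), while the paper's buys a bijective interpretation consistent with the ball-and-box meaning it attaches to $C_{\alpha}$ and $D_{\gamma}$ and with the combinatorial proof it gives for Proposition~\ref{indcomb}.
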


We shall give a combinatorial proof to Proposition \ref{sumZgamma}, since an algebraic one seems hard to establish.
\begin{proof}
Assume that we are given $g-1$ balls, and that for every $H$ and $t$ with $|H|+t\geq2$ there are $ts_{H,t}$ balls that are marked with $H$ and $t$ (the total sum $\sum_{H,t}ts_{H,t}$ of the numbers of balls indeed equals $g-1$). Assume further that the balls with the markings $H$ and $t$ arrive packed in $s_{H,t}$ identical boxes, with each box containing $t$ balls. If we wish to collect $N$ disjoint sets of balls, of respective sizes $s_{i,0}$, $1 \leq i \leq N$, from these $g-1$ balls (with a remainder of $g-\sum_{i=1}^{N}s_{i,0}$ balls), then it is known that the number of ways to do so is $\binom{g-1}{s_{1,0},\ldots,s_{N,0}}$. We call this a \emph{collection of sets of balls}, and the sizes $\{s_{i,0}\}_{i=1}^{N}$ of the sets are fixed parameters.

We now count this number of choices in a different way, which is based on the markings of the balls and their partitions into the boxes. Considering a collection of sets of balls, a multiset $H$, and some $t$ with $|H|+t\geq2$, take $K$ to be a multiset with multiplicities $\{\kappa_{i}\}_{i=1}^{N}$, and let $q_{H,t,K}$ denote the set of boxes of balls with markings $H$ and $t$ from which we took precisely $\kappa_{i}$ balls for the $i$th set. The fact that the sets are disjoint and there are $t$ balls in every such box means that $q_{H,t,K}$ can be positive only if $|K| \leq t$. Given such a collection of parameters $\{q_{H,t,K}\}_{H,t,K}$, the number of possibilities to choose which $q_{H,t,K}$ of the $s_{H,t}$ boxes are those from which we take balls according to $K$ is $s_{H,t}!\big/\prod_{|K| \leq t}q_{H,t,K}!$, and after determining such a choice, the number of ways to take $\kappa_{i}$ balls to the $i$th set for every $1 \leq i \leq N$ from the $t$ balls in each of the $q_{H,t,K}$ boxes determined for this is $\binom{t}{K}$. The choices over the boxes are independent, yielding $\prod_{|K| \leq t}\binom{t}{K}^{q_{H,t,K}}$, which we then multiply by previous factor, and as the choices for each $H$ and $t$ are also independent, this gives the product associated with $\{q_{H,t,K}\}_{H,t,K}$. Now, since from every box with $H$ and $t$ we took some (possibly empty) set of balls, the sum $\sum_{|K| \leq t}q_{H,t,K}$ equals the number of such boxes, namely $s_{H,t}$, and for every $1 \leq i \leq N$ the sum $\sum_{H,t,K}\kappa_{i}q_{H,t,K}$ equals the number of balls in the $i$th set, which is $s_{i,0}$. This means that $\{q_{H,t,K}\}_{H,t,K}$ lies in $Z_{\gamma}$, and since every element of $Z_{\gamma}$ contributes in this way, and the contributions count distinct ways to choose the balls, the sum over $Z_{\gamma}$ of our expressions indeed yields the multinomial coefficient from the direct answer to this question. This proves the proposition.
\end{proof}

Theorem 17 of \cite{[Z]} was already known from \cite{[Wi]} (and in some sense \cite{[J3]}). However, this does not seem to be the case for our Theorem \ref{claselts}, in this generality. We therefore supply its proof.
\begin{proof}[Proof of Theorem \ref{claselts}]
Lemma \ref{expDeltaJfyr} already expands the expression from Theorem \ref{coeffval} as a linear combination of the products of partial derivatives of $f$ that are associated with elements of $B_{I}$, with the coefficient $\frac{(-1)^{g}}{f_{y}^{g}}$ already showing up when the element lies in $B_{I,g}$, as does the part $\frac{I!}{\prod_{H,t}(t!H!)^{s_{H,t}}}$ of $D_{\gamma}$ from Equation \eqref{Dgamma}. Proposition \ref{sumZgamma} allows us to replace the sum over $Z_{\gamma}$ in that lemma by $\binom{g-1}{s_{1,0},\ldots,s_{N,0}}\big/\prod_{|H|+t\geq2}s_{H,t}!$. Expanding this multinomial coefficient, the factor $\big(g-\sum_{i=1}^{N}s_{i,0}-1\big)!$ cancels, and we get the remaining numerator $(g-1)!$ as well as the missing factorials $\prod_{i=1}^{N}s_{i,0}!$ in the denominator. Recalling that if $|H|+t\leq1$ then either $H$ is a singleton and $t=0$ or $H$ is empty and $t\leq1$, and that in elements of $B_{I}$ the multiplicities $s_{\emptyset,0}$ and $s_{\emptyset,1}$ must vanish, this is indeed the desired expression. This proves the theorem.
\end{proof}

Note that in every product of derivatives that shows up in Theorem \ref{claselts}, the total differentiation with respect to the variables $x_{i}$, $1 \leq i \leq N$ is given by the multiset $I$, and the number of differentiations with respect to $y$ in the numerator is one less that the power of $f_{y}$ in the denominator. This generalizes the observation from \cite{[Z]} that is based on the prediction from \cite{[N]}. The fact that every term in the expression for $\Delta_{J}f_{y^{r}}$ in Definition \ref{DeltaJg} contains differentiation with respect to the $x_{i}$'s that are given by $J$, and the number of differentiations with respect to $y$ in every such term is $r+|J|$, means that the same observation can be drawn also from Proposition \ref{formnoden} or Corollary \ref{formwden}.

\medskip

We conclude with the following remark about our coefficients $C_{\alpha}$ and $D_{\gamma}$ from Definition \ref{combcoeff} and Equation \eqref{Dgamma}.
\begin{rmk}
Recall from Proposition 1 of \cite{[H]} that the formula for higher partial derivatives of a product of functions, or of a composition of functions, contain no numerical coefficients when the variables are all distinct. Moreover, Proposition 2 there states that in more general derivatives we obtain coefficients only due to formerly distinguishable terms becoming indistinguishable, and the coefficients count the appropriate number of collapsing partitions (the case from Fa\`{a} di Bruno's formula, of an ordinary derivative, is, of course, a special case of this construction). Proposition 4 of that reference states that if a multiset $I$ has a multiset partition as $\sum_{K\neq\emptyset}\mu_{K}K$, then the number of partitions of a set of size $|I|$ that under the identifications that create $I$ collapse to the partition $\sum_{K\neq\emptyset}\mu_{K}K$ is $I!\big/\prod_{K}\mu_{K}!K!^{\mu_{K}}$. A similar argument shows that if we have a marking $v$ of the multiplicities $\mu_{K}$, namely $\mu_{K}=\sum_{v}\mu_{K,v}$ and in collapsing the partitions we remember which part belongs to which marking, then the number of partitions that collapse in the same identifications process is now $I!\big/\prod_{K,v}\mu_{K,v}!K!^{\mu_{K,v}}$ (this is the previous expression multiplies by the product over $K$ of the multinomial coefficients $\mu_{K}!\big/\prod_{v}\mu_{K,v}!$). Now, if the multiset $I$ is a set of size $|I|$, then $I!=1$, the multiplicities $m_{J,r}$ appearing in Definition \ref{combcoeff} or $s_{H,t}$ from Equation \eqref{Dgamma} equal 0 wherever the multiset $J$ or $H$ is not a set and equal 0 or 1 when it is a non-empty set, meaning that $m_{J,r}!J!^{m_{J,t}}$ or $s_{H,t}!H!^{s_{H,t}}$ equal 1 as well unless $J$ or $H$ are empty. In this case the coefficient $C_{\alpha}$ for $\alpha \in A_{I,h}$ reduces to $(h-1)!\big/\prod_{r\geq2}m_{\emptyset,r}!\prod_{|J|+r\geq2}r!^{m_{J,r}}$, and for an element $\gamma \in B_{I,g}$, the coefficient $D_{\gamma}$ becomes $(g-1)!\big/\prod_{t\geq2}s_{\emptyset,t}!\prod_{H,t}t!^{s_{H,t}}$ (the products on $r$ or $t$ alone begin with 2 because of the restriction $|J|+r\geq2$ for the $m_{J,r}$'s and $s_{\emptyset,0}=s_{\emptyset,1}=0$ for the $s_{H,t}$'s). These can be viewed as the \emph{fundamental parts} of the coefficients (unlike the case corresponding to Fa\`{a} di Bruno's formula, they do not always equal 1---see Equation \eqref{yder4} for the first case where the former equals 2). Our generalization of Proposition 4 of \cite{[H]} mentioned here shows that in general, the coefficient $C_{\alpha}$ or $D_{\gamma}$ is the product of its fundamental part and the corresponding number of partitions that collapse to the desired one when a set of cardinality $|I|$ gets identified to produce the multiset $I$. However, the formula for general coefficients is not more complicated than the formula for their fundamental parts, which is why it is more natural (unlike in \cite{[H]}) to consider general differentiation from the start, rather than begin with differentiation with respect to distinct variables. \label{Hardy}
\end{rmk}

\noindent\textsc{Einstein Institute of Mathematics, the Hebrew University of Jerusalem, Edmund Safra Campus, Jerusalem 91904, Israel}

\noindent E-mail address: zemels@math.huji.ac.il

\end{document}